\documentclass[a4 paper, 11 pt]{article}
\usepackage{a4wide, amsmath, amssymb, mathtools, yfonts, amsrefs}
\usepackage{comment, bbold, enumerate}
\usepackage{tikz}
\usepackage{tikz-cd}
\usepackage[all]{xy}
\usepackage[utf8]{inputenc}
\usepackage{amsthm, mathrsfs}
\usepackage[english]{babel}
\usepackage{hyperref}
\usepackage{authblk}
\usepackage[OT2,T1]{fontenc}
\DeclareSymbolFont{cyrletters}{OT2}{wncyr}{m}{n}
\DeclareMathSymbol{\Sha}{\mathalpha}{cyrletters}{"58}

\numberwithin{equation}{section}
\newtheorem{lemma}{Lemma}[section]
\newtheorem{theorem}[lemma]{Theorem}
\newtheorem{proposition}[lemma]{Proposition}
\newtheorem{corollary}[lemma]{Corollary}
\newtheorem{notation}[lemma]{Notation}

\theoremstyle{remark}
\newtheorem{remark}{Remark}
\newtheorem*{remark*}{Remark}

\theoremstyle{definition}

\newcommand{\N}{\mathbb{N}}
\newcommand{\Z}{\mathbb{Z}}
\newcommand{\Q}{\mathbb{Q}}

\newcommand{\R}{\mathbb{R}}

\newcommand{\GL}{\mathrm{GL}}

\newcommand\Aut{\text{Aut}}

\newcommand\Gal{\mathrm{Gal}}

\newcommand\Disc{\mathrm{Disc}}
\newcommand\Stab{\mathrm{Stab}}
\newcommand\Res{\mathrm{Res}}

\newcommand\ord{\mathrm{ord}}

\renewcommand{\pmod}[1]{\,(\mathrm{mod}\,#1)}

\title{\vspace{-\baselineskip}\sffamily\bfseries Asymptotics for $6$-torsion and $D_6$-extensions}
\author[1]{P.~Koymans\thanks{Mathematisch Instituut, Universiteit Utrecht, Postbus 80.010, 3508 TA Utrecht, The Netherlands, p.h.koymans@uu.nl}}
\author[2, 3]{R.J.~Lemke Oliver\thanks{Department of Mathematics, University of Wisconsin-Madison, Madison, WI 53706, USA, lemkeoliver@wisc.edu}\thanks{Department of Mathematics, Tufts University, Medford, MA 02155, USA, robert.lemke oliver@tufts.edu}}
\author[4]{E.~Sofos\thanks{Universit\`a di Roma Tor Vergata, Dipartimento di Matematica, 00133, Rome, sofos@mat.uniroma2.it}}
\author[5]{F.~Thorne\thanks{Department of Mathematics, University of South Carolina, 1523 Greene Street, Columbia, SC
29208, thorne@math.sc.edu}}
\affil[1]{Utrecht University}
\affil[2]{University of Wisconsin-Madison}
\affil[3]{Tufts University}
\affil[4]{Universit\`a di Roma Tor Vergata}
\affil[5]{University of South Carolina}
\date{\today}

\begin{document}
\maketitle

%\tableofcontents
 
\begin{abstract}
We prove a composite case of the Cohen--Lenstra--Gerth heuristics. Specifically, we establish  an asymptotic for the average $6$-torsion of the class group of quadratic number fields. We also prove Malle's conjecture for Galois $D_6$-extensions.
\end{abstract}
%We build on recent results on averages of submultiplicative functions and the level of distribution for cubic fields to prove an asymptotic for the average $6$-torsion of the class group of quadratic number fields that confirms the Cohen--Lenstra--Gerth prediction. Furthermore, we prove Malle's conjecture for Galois $D_6$-extensions.  
\section{Introduction}
\subsection{Main results}
For a discriminant $D$ and $n\in \mathbb N$, write $h_n(D) := \# \mathrm{Cl}(\Q(\sqrt{D}))[n]$. The main objective of this paper is to prove the following asymptotic formula for the average of $h_6(D)$, as $D$ ranges over discriminants of quadratic fields with specified signature. For imaginary quadratic fields we obtain:

\begin{theorem}
\label{thm:main-}
We have
\[
\sum_{\substack{-X < D < 0 \\ D \textup{ fundamental}}} h_6(D) = 
\frac{3}{\pi^2} X (\log X) \prod_{\substack{p=2\\p \ \mathrm{ prime}}}^\infty \left(1 + \frac{1}{p + 1}\right) \left(1 - \frac{1}{p}\right) 
+ O(X (\log \log X)^{7/2}).
\]
\end{theorem}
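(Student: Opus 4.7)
The plan rests on the decomposition $h_6(D) = h_2(D) h_3(D)$ coming from the isomorphism $\Cl(\Q(\sqrt{D}))[6] \cong \Cl(\Q(\sqrt{D}))[2] \oplus \Cl(\Q(\sqrt{D}))[3]$. Genus theory gives $h_2(D) = 2^{\omega(D)-1}$ for a negative fundamental discriminant $D$, and the class-field-theoretic correspondence (Hasse) between $3$-torsion and cubic fields gives $h_3(D) = 1 + 2N_3(D)$ (for $D \ne -3$), where $N_3(D)$ counts non-Galois cubic fields of discriminant $D$. Hence
\[
\sum_{\substack{-X<D<0 \\ D \text{ fund.}}} h_6(D) = \sum_{\substack{-X<D<0 \\ D \text{ fund.}}} 2^{\omega(D)-1} + \sum_{\substack{K/\Q \text{ cubic} \\ -X < \Disc(K) < 0 \\ \Disc(K) \text{ fund.}}} 2^{\omega(\Disc(K))},
\]
and I would handle the two sums separately.

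The first sum is of Selberg--Delange type: the Dirichlet series $\sum_{D\text{ fund.}} 2^{\omega(D)}|D|^{-s}$ is, up to local corrections enforcing the fundamental-discriminant condition, $\zeta(s)^2/\zeta(2s)$ times an explicit Euler product, so a standard contour/Tauberian argument yields an asymptotic of shape $c_1 X\log X+O(X)$. The weighted cubic-field sum carries the substance. Expanding $2^{\omega(n)}=\sum_{d\mid n}\mu^2(d)$ and swapping orders, the problem reduces to counting cubic fields $K$ with $d\mid\Disc(K)$, uniformly in squarefree $d$. Such counts are available via the Delone--Faddeev parametrization of cubic rings by $\GL_2(\Z)$-orbits on integral binary cubic forms, with local conditions at primes $p\mid d$ enforcing $p$-ramification. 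The modern toolkit -- Bhargava--Shankar--Tsimerman's geometry-of-numbers counts with secondary terms, Taniguchi--Thorne's Shintani zeta-function analysis, and the uniform local-density estimates of Bhargava--Taniguchi--Thorne -- supplies the main term; after the $\mu^2$-weighted summation over $d$, the Euler factors at each $p$ reassemble into the product $\prod_p(1+\tfrac{1}{p+1})(1-\tfrac{1}{p})$ of the theorem, while the simple pole from the outer sum produces the $\log X$ factor.

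The principal obstacle is uniformity of the error. A naive application of cubic-field counts with a divisibility condition on the discriminant gives an error of shape $X^{1-\delta}/d^{\alpha}$, which falls short of $O(X(\log\log X)^{7/2})$ once summed against the full range of $d$. The plan for closing the gap is to split at a threshold: for $d$ below some power of $\log X$ use sharp main-term asymptotics directly, and for the tail invoke a sieve-type upper bound for cubic fields whose discriminant admits many small prime divisors, the unusual iterated-logarithm exponent $7/2$ being a plausible outcome of optimizing such a sieve against a second-moment input. Routine cleanup absorbs the Galois (cyclic) cubic fields, reducible binary cubic forms, and the exceptional discriminants $D\in\{-3,-4\}$, all of which enter only through the error term.
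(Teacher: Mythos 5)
Your skeleton matches the paper's: write $h_6=h_2h_3$, use genus theory for $h_2$, identify $h_3(D)-1$ with (twice) a count of cubic fields of discriminant $D$, expand $2^{\omega}$ into a divisor sum, and reduce to counting cubic fields with $d\mid\Disc(K)$ uniformly in $d$. The gap is in the one step that carries all the difficulty: your proposed split (``asymptotics for $d$ below a power of $\log X$, sieve upper bound for the tail'') cannot yield an asymptotic. The main term is $\frac{3}{\pi^2}X\sum_{d\le\sqrt X}\mu^2(d)\prod_{p\mid d}(p+1)^{-1}\asymp X\log X$, and the portion of this divisor sum with $d\le(\log X)^A$ contributes only $O_A(X\log\log X)$; essentially the entire main term lives in the range $(\log X)^A<d\le\sqrt X$ that you propose merely to bound from above. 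An upper bound on that range can only reproduce the previously known estimate $\sum_{|D|\le X}h_6(D)\ll X\log X$ of Chan--Koymans--Pagano--Sofos; it cannot produce the constant $\frac{3}{\pi^2}\prod_p\left(1+\frac{1}{p+1}\right)\left(1-\frac{1}{p}\right)$, so as written the plan does not prove the theorem.

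What is actually needed, and what the paper proves, is an asymptotic (a level of distribution) valid for all squarefree $d$ up to $X^{1/2}(\log X)^{-5}$ --- Theorem \ref{thm:ld}, which sharpens the $X^{1/2-\epsilon}$ level of Bhargava--Taniguchi--Thorne by rerunning the Shintani zeta function/Landau-method analysis with only logarithmic losses (and itself uses the CKPS upper bound as input, via Lemma \ref{lem:divisor_bound_1}) --- together with a sharp upper bound only on the narrow window $\tfrac14 X^{1/2}(\log X)^{-5}\le d\le X^{1/2}$. In that window your ``optimize a sieve against a second-moment input'' is not a mechanism: the paper's device is that the number of divisors of $|D|$ in an interval of multiplicative width $(\log X)^{O(1)}$ is $O(\log\log X)\cdot\Delta(|D|)$, where $\Delta$ is the Hooley Delta function, and then $\sum_{0<-D\le X}(h_3(D)-1)\Delta(|D|)$ is bounded by the sieve theorem of Chan--Koymans--Pagano--Sofos applied to $f=\Delta$ (which again requires a level of distribution for $h_3$ as input), combined with the Koukoulopoulos--Tao and La Bret\`eche--Tenenbaum bound $\sum_{n\le x}\Delta(n)\ll x(\log\log x)^{5/2}$; this is Lemma \ref{hooleylemma} and is exactly where the exponent $7/2=5/2+1$ arises, rather than from any second-moment optimization. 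Your treatment of the first sum (the pure $h_2$ average) is fine --- Selberg--Delange would do, and the paper instead uses a direct divisor expansion with the BTT count --- but without the improved level of distribution and the Delta-function lemma the second sum does not close.
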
 

This is the first instance in which the Cohen--Lenstra--Gerth heuristics (see \cites{CohenLenstra1,CohenLenstra2,Gerth}) have been established for $h_n$ when the number of distinct prime factors of $n$ is strictly larger than $1$.
These heuristics predict that for each $n\geq 2$ there is a constant $c_n>0$ such that 
$$
\sum_{\substack{-X < D < 0 \\ D \textup{ fundamental}}} h_n(D) \sim
\begin{cases}c_n X, &\text{if } n \text{ is odd,} \\
c_n X \log X, &\text{if } n \text{ is even}
\end{cases}
$$
as $X \to \infty$. In particular, Theorem \ref{thm:main-} proves a case of the Cohen--Lenstra--Gerth heuristics for class group torsion in imaginary quadratic fields. Such results seem hard to come by. To date, asymptotics for $h_n$ have only been proved for $n = 2$ (elementary; see \S \ref{ss6Main}), $n = 3$ (Davenport--Heilbronn \cite{DH}); $n = 4$ (Fouvry--Kl\"uners \cite{FKIMRN}); and $n$ an arbitrary power of $2$ (Smith \cites{Smith,Smith2}). For more general number fields, the average $2$-torsion of cubic fields was determined by Bhargava--Varma \cite{BV} and the average $3$-torsion of certain $2$-extensions was determined by Lemke Oliver--Wang--Wood \cite{LOWW}. Returning to quadratic fields, upper bounds of the correct order of magnitude are available for all $n$ of the shape $3 \cdot 2^k$ by work of Koymans--Pagano--Sofos \cite{KPS}.

It is widely expected that different parts of class groups are \emph{uncorrelated} but our result is the first to rigorously prove this in a concrete case. More precisely, for $n\in \mathbb N$ and $X> 3$ let 
$$
\mathbb{E}_n(X) = \frac{1}{\#\{D\in (-X,0): D \textup{ fundamental} \}}
\sum_{\substack{-X < D < 0 \\ D \textup{ fundamental}}} h_n(D) .
$$ 
For fixed $D$, when $n, m$ are coprime, we have $h_{nm}(D) = h_n(D) h_m(D)$. Consequently, we might expect that $\mathbb{E}_{nm}(X)$ is asymptotic to $\mathbb{E}_n(X) \mathbb{E}_m(X)$ as $X \to\infty$.  It follows from Theorem~\ref{thm:main-} that this is the case when $n=2$ and $m=3$: 

\begin{corollary}
We have
$$
\mathbb{E}_6(X) = \mathbb{E}_2(X) \mathbb{E}_3(X)\left(1+o_{X\to\infty}(1)\right).
$$
\end{corollary}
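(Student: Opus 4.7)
The plan is to compute each of the three averages separately and then compare. From Theorem~\ref{thm:main-} and the classical asymptotic $\#\{D\in(-X,0) : D \textup{ fundamental}\}\sim \frac{3}{\pi^2}X$, I would read off directly that
$$
\mathbb{E}_6(X) \sim (\log X)\prod_p\Bigl(1+\tfrac{1}{p+1}\Bigr)\Bigl(1-\tfrac{1}{p}\Bigr).
$$
The Davenport--Heilbronn theorem \cite{DH} gives $\mathbb{E}_3(X) \to 2$ as $X\to \infty$.

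The remaining input is $\mathbb{E}_2(X)$, and for this I would invoke genus theory, which yields $h_2(D) = 2^{\omega(D)-1}$ for every fundamental discriminant $D$, where $\omega(D)$ counts the distinct rational primes dividing $D$. I would split the sum over $D$ according to the three standard shapes of negative fundamental discriminant ($D=-n$ with odd squarefree $n\equiv 3 \pmod 4$; $D=-4n$ with odd squarefree $n\equiv 1 \pmod 4$; and $D=-8m$ with $m$ odd squarefree). On each piece a routine Selberg--Delange style computation, based on the factorisation $\sum_n\mu^2(n)2^{\omega(n)}n^{-s} = \zeta(s)^2 \prod_p (1+2p^{-s})(1-p^{-s})^2$ together with the appropriate mod~$4$ character twists, gives an $X\log X$ asymptotic with an explicit Euler product. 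Summing the three contributions and dividing by $\#\{D\in(-X,0):D\textup{ fundamental}\}\sim\frac{3}{\pi^2}X$ is designed to yield
$$
\mathbb{E}_2(X) \sim \tfrac{1}{2}(\log X)\prod_p\Bigl(1+\tfrac{1}{p+1}\Bigr)\Bigl(1-\tfrac{1}{p}\Bigr).
$$

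Combining the three asymptotics then produces $\mathbb{E}_2(X)\mathbb{E}_3(X) \sim \mathbb{E}_6(X)$, whence the corollary. The substantive mathematics is all inside Theorem~\ref{thm:main-}; the only bookkeeping step specific to the corollary itself is to verify the match of local factors at $p=2$ across the three shapes of fundamental discriminant, since these distribute the $2$-adic mass asymmetrically. This is the part that needs to be checked carefully, but it ultimately collapses to the same clean Euler factor $\bigl(1+\tfrac{1}{p+1}\bigr)\bigl(1-\tfrac{1}{p}\bigr)$ at every prime, so no arithmetic obstruction to the uncorrelation statement arises.
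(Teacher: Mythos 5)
Your proposal is correct and is essentially the paper's (implicit) argument: the corollary is read off from Theorem~\ref{thm:main-} together with the classical averages $\mathbb{E}_3(X)\to 2$ (Davenport--Heilbronn) and $\mathbb{E}_2(X)\sim \tfrac12(\log X)\prod_p\bigl(1+\tfrac{1}{p+1}\bigr)\bigl(1-\tfrac{1}{p}\bigr)$ from genus theory, the latter matching the computation the paper performs internally in \S\ref{ss6Main} via \eqref{eq:pie2} and \cite[Theorem A.5]{FI}. Your local-factor bookkeeping at $p=2$ across the three shapes of fundamental discriminant checks out, so nothing further is needed.
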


For real quadratic fields, we obtain the averages of both $h^+_6(D)$ and $h_6(D)$:

\begin{theorem}
\label{thm:main+}
We have
\[
\sum_{\substack{0 < D < X \\ D \textup{ fundamental}}} h^+_6(D) = 
\frac{2}{\pi^2} X(\log X) \prod_{\substack{p=2\\p \ \mathrm{ prime}}}^\infty \left(1 + \frac{1}{p + 1}\right) \left(1 - \frac{1}{p}\right) 
+ O(X (\log \log X)^{7/2}),
\]
and
\[
\sum_{\substack{0 < D < X \\ D \textup{ fundamental}}} h_6(D) \sim
\frac{1}{\pi^2} X(\log X) \prod_{\substack{p=2\\p \ \mathrm{ prime}}}^\infty \left(1 + \frac{1}{p + 1}\right) \left(1 - \frac{1}{p}\right).
\]
\end{theorem}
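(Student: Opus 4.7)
I would prove the two formulas in order, reducing the $h_6$-statement to the $h_6^+$-statement. The $h_6^+$-asymptotic follows by mirroring the proof of Theorem~\ref{thm:main-} essentially verbatim: since $\gcd(2,3)=1$, one has $h_6^+(D) = h_2^+(D)\cdot h_3^+(D) = 2^{\omega(D)-1}\, h_3(D)$ using Gauss's genus theory for the narrow $2$-part and the fact that the $3$-part is insensitive to the narrow/ordinary distinction, and this is precisely the decomposition used in the imaginary case. The only substantive change is at the archimedean place: the Davenport--Heilbronn cubic count now involves totally real rather than complex $S_3$-cubic fields, which substitutes $\tfrac{2}{\pi^2}$ for $\tfrac{3}{\pi^2}$ in the leading constant and accounts for the standard $2:3$ Davenport--Heilbronn ratio. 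All sieve and error-term estimates transfer without modification, giving the claimed bound $O(X(\log\log X)^{7/2})$.

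\textbf{Passage to $h_6$.} Let $\varepsilon_D$ denote the fundamental unit of $\Q(\sqrt D)$. Since $3$ is coprime to $[\mathrm{Cl}^+:\mathrm{Cl}]\in\{1,2\}$, we have $h_3^+(D)=h_3(D)$, while genus theory gives $h_2(D) = h_2^+(D)$ when $N(\varepsilon_D)=-1$ and $h_2(D) = h_2^+(D)/2$ otherwise. A short rearrangement yields
\[
\sum_{\substack{0<D<X\\D\ \textup{fund.}}} h_6(D) \;=\; \tfrac12\sum_{\substack{0<D<X\\D\ \textup{fund.}}} h_6^+(D) \;+\; \tfrac12\sum_{\substack{0<D<X\\D\ \textup{fund.}\\ N(\varepsilon_D)=-1}} h_6^+(D),
\]
so in view of the narrow asymptotic just proved, it suffices to show that the exceptional sum on the right is $o(X\log X)$; this then produces the leading constant $\tfrac{1}{\pi^2}$ together with the displayed Euler product.

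\textbf{Main obstacle.} The crux is bounding this exceptional sum. Since $N(\varepsilon_D)=-1$ forces every prime divisor of $D$ to lie in the progression $p\equiv 1\pmod 4$, the exceptional sum is majorised by $\sum 2^{\omega(D)-1} h_3(D)$ taken over fundamental $D\le X$ supported on primes $\equiv 1 \pmod 4$. Analytically, imposing this multiplicative restriction modifies the Dirichlet series underpinning the narrow proof only at primes $\equiv 3\pmod 4$, and a Landau--Selberg--Delange style analysis shows that the singularity at $s=1$ is strictly weaker than that of the unrestricted series (the $2^{\omega}$-weight contributes a double pole unrestricted but essentially a simple pole after restriction, since only half the primes survive), producing a partial-sum bound $O(X(\log X)^A)$ for some $A<1$; this is comfortably $o(X\log X)$. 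I expect the genuine challenge to lie not in the analytic input itself but in verifying that the uniform sieve and Perron-type estimates of the narrow proof survive the insertion of this fixed multiplicative constraint. Since the constraint factors across primes and is captured by a single quadratic Dirichlet character, this should amount to careful bookkeeping rather than requiring new ideas.
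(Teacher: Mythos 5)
Your treatment of the first display is, in substance, the paper's own proof: one repeats the argument for Theorem~\ref{thm:main-} verbatim, using that $h_6^+(D)=2^{\omega(D)-1}h_3(D)$ and that Theorem~\ref{thm:ld} and Lemma~\ref{hooleylemma} are stated for either sign of the discriminant. One bookkeeping remark: it is only the cubic-field piece (the analogue of \eqref{eq:applied_lod}) that changes, being multiplied by $1/3$ because of the Davenport--Heilbronn $1{:}3$ ratio between totally real and complex cubic fields; the genus-theory piece $\sum_D h_2^+(D)$ keeps its constant, and it is the sum $\tfrac{3}{2\pi^2}+\tfrac{1}{2\pi^2}$ of the two pieces that yields $\tfrac{2}{\pi^2}$. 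The ``$2{:}3$'' you invoke is the ratio of the final constants, not a Davenport--Heilbronn ratio, but this does not affect the argument.

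The passage to $h_6$ contains a genuine error and a genuine gap. First, the relation between $h_2$ and $h_2^+$ is not governed by $N(\varepsilon_D)$: by \eqref{eh2D}, $h_2(D)=h_2^+(D)$ exactly when $D$ has no prime divisor $\equiv 3 \pmod{4}$, a strictly weaker condition than $N(\varepsilon_D)=-1$. For instance, for $D=136$ (the field $\Q(\sqrt{34})$) the fundamental unit $35+6\sqrt{34}$ has norm $+1$, yet $\Cl^+\simeq \Z/4$, $\Cl\simeq\Z/2$, so $h_2(D)=h_2^+(D)=2$. Hence your displayed identity for $\sum_D h_6(D)$ is false as written; it is repairable only because the set you ultimately bound ($D$ supported on $p\equiv 1\pmod{4}$ and $2$) happens to be exactly the correct exceptional set. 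Second, and more seriously, the claimed Landau--Selberg--Delange bound $O(X(\log X)^A)$, $A<1$, for $\sum 2^{\omega(D)-1}h_3(D)$ over such $D$ is not justified: the condition ``every prime divisor of $D$ lies in a fixed residue class'' is a complete multiplicative restriction, not finitely many local conditions, and no analytic continuation is available for the correspondingly restricted $h_3$-weighted (Shintani-type) Dirichlet series; the restriction is not absorbed ``by a single quadratic character'' at the level of the series. To make this step rigorous one must go back through the counting machinery. The paper does this by imposing only the finitely many conditions $p\nmid D$ for $p\le Z$, $p\equiv 3\pmod{4}$ --- admissible local specifications in Theorem~\ref{thm:ld} at the cost of a slightly reduced level of distribution --- so that the main term acquires the factor $\prod_{p\le Z,\,p\equiv 3\pmod{4}}\frac{p}{p+1}<\epsilon$ of \eqref{eq:ram_prod}, and then lets $\epsilon\to 0$. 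Alternatively one can expand
\begin{equation*}
\mathbf{1}\bigl[\,p\mid D \Rightarrow p\not\equiv 3\!\!\pmod{4}\,\bigr]\,2^{\omega(D)}
\;\asymp\; \sum_{d\mid D}\mu^2(d)\chi_{-4}(d)
\end{equation*}
(for the odd part of $D$) and rerun the hyperbola argument with these signed weights, which again requires Theorem~\ref{thm:ld} for $d\le \sqrt{X}(\log X)^{-5}$ and Lemma~\ref{hooleylemma} for $d$ near $\sqrt{X}$; this even shows the exceptional sum is $O(X(\log\log X)^{7/2})$. Either way, the required input is the level-of-distribution and Hooley-$\Delta$ apparatus, not an off-the-shelf Selberg--Delange estimate, so this step is more than the ``careful bookkeeping'' you anticipate.
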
 

As $h^+_6(D) = 2h_6(D)$ whenever $D$ has any prime divisor $\equiv 3 \pmod{4}$, the result for $h_6(D)$ will be easy to conclude from that for $h^+_6(D)$. 

Our final main result, whose proof uses a technical generalization of the above results to handle certain local conditions, 
establishes Malle's conjecture for Galois $D_6$-extensions, where $D_6$ is the dihedral group of order $12$. Let 
	\[
		\mathcal{F}(X;D_6)
			:= \{ K/\mathbb{Q} \textnormal{ Galois} : |\mathrm{Disc}(K)| \leq X, \mathrm{Gal}(K/\mathbb{Q}) \simeq D_6\}.
	\]

\begin{theorem}
\label{tMalle}
We have
\[
\# \mathcal{F}(X;D_6)
		=
C X^{1/6} (\log X)^2 + O\big( X^{1/6} (\log X) (\log \log X)^{9/2} \big),
\]
where $C$ is given by 
$$  \frac{1}{2^2 3^7} \left( \frac{23}{8} + \frac{1}{2^{4/3}} + \frac{1}{2^{5/3}}\right)\left(\frac{14}{9} + \frac{1}{3^{4/3}} + \frac{1}{3^{5/3}}\right)
 \prod_{\substack{p=5\\p \ \mathrm{ prime}}}^\infty \left(1-\frac{1}{p}\right)^3\left(1 + \frac{3}{p} + \frac{1}{p^{4/3}} + \frac{1}{p^{5/3}}\right).
$$
\end{theorem}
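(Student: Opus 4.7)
Every Galois $D_6$-extension $K/\mathbb{Q}$ decomposes uniquely as $K = F \cdot M$, where $F/\mathbb{Q}$ is Galois with group $S_3$ and $M/\mathbb{Q}$ is a quadratic field linearly disjoint from $F$. Writing $F = L \cdot M_F$ with $L$ a non-cyclic cubic and $M_F = \mathbb{Q}(\sqrt{\mathrm{disc}(L)})$ its quadratic resolvent, and using class field theory to parametrize the cyclic cubic extension $F/M_F$ by a character of a ray class group of $M_F$, I would parametrize $D_6$-extensions by triples $(M_F, \chi, M)$ in which $\chi$ is an order-$3$ character of some $\mathrm{Cl}_{\mathfrak{m}}(M_F)$ lying in the $(-1)$-eigenspace under $\mathrm{Gal}(M_F/\mathbb{Q})$ and $M \neq M_F$ is a quadratic field. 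The first concrete step is to establish this parametrization and, by applying the conductor--discriminant formula to the six irreducible characters of $D_6$ (of dimensions $1,1,1,1,2,2$), compute $|\mathrm{Disc}(K)|$ as a product of local factors. In the coprime case the identity
\[
|\mathrm{Disc}(K)| = |d_{M_F}|^{6}\cdot |d_M|^{6}\cdot \mathrm{N}_{M_F/\mathbb{Q}}(\mathfrak{f}_\chi)^{4}
\]
holds, so the bound $|\mathrm{Disc}(K)| \leq X$ becomes $|d_{M_F} d_M| \cdot \mathrm{N}(\mathfrak{f}_\chi)^{2/3} \leq X^{1/6}$. At a tame prime $p \nmid 6$ the local exponent of $p$ in $|\mathrm{Disc}(K)|$ is one of $\{0, 6, 8, 10\}$, with three distinct joint ramification patterns of $(L, M)$ yielding the value $6$; this is the origin of the Euler factor $(1-1/p)^3(1+3/p+p^{-4/3}+p^{-5/3})$ in $C$, the fractional exponents being exactly $v/6$ for $v = 8, 10$.

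With the parametrization and discriminant formula in hand, $\#\mathcal{F}(X; D_6)$ becomes (up to an overcounting factor of two coming from $\chi \leftrightarrow \chi^{-1}$) a triple sum of indicators $\mathbf{1}_{|\mathrm{Disc}(K)| \leq X}$. The critical step is to evaluate the inner sum over $\chi$ for fixed $(M_F, M)$: by class field theory this counts anti-invariant $3$-torsion elements in ray class groups of $M_F$ with moduli drawn from a family determined by $d_M$. For sums of this type \emph{without} the added local conditions imposed by $M$, the required asymptotic is delivered by the technology underlying Theorems~\ref{thm:main-} and \ref{thm:main+}; the plan is to prove a technical generalization of that machinery allowing prescribed local conditions at primes dividing $d_M$ (and separately at the wild primes $2$ and $3$), with error terms uniform in the local data. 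Once the inner asymptotic is available, the outer summation over $(M_F, M)$ produces the main term $CX^{1/6}(\log X)^2$: one factor of $\log X$ arises from the harmonic sum $\sum_{|d_M| \leq X^{1/6}} |d_M|^{-1}$ and the second from the divisor-sum structure internal to the inner asymptotic. The constant $C$ assembles as a product of local densities, with the wild primes $2$ and $3$ producing the explicit factors $\bigl(\tfrac{23}{8} + 2^{-4/3} + 2^{-5/3}\bigr)$ and $\bigl(\tfrac{14}{9} + 3^{-4/3} + 3^{-5/3}\bigr)$, and the tame primes $p \geq 5$ producing the Euler product described above.

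The main obstacle I anticipate is the technical generalization itself: the machinery of Theorems~\ref{thm:main-} and \ref{thm:main+} must be extended to handle the minus-part $3$-torsion of ray class groups of quadratic fields, subject to prescribed local conditions at an arbitrary (finitely-restricted) set of primes, uniformly over the family of quadratic fields and with error bounds strong enough to survive the outer summation (producing the claimed error $O(X^{1/6}(\log X)(\log\log X)^{9/2})$). This requires revisiting the cubic-reciprocity and $L$-function input of those theorems in the presence of local ramification constraints. Secondary obstacles include the non-coprime case of the discriminant formula, in which primes divide more than one of $d_{M_F}$, $d_M$, and $\mathfrak{f}_\chi$, and the careful wild analysis at $p = 2, 3$ where the inertia structure is richer and the local Euler factors take the special numerical form appearing in $C$.
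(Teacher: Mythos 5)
Your setup (quadratic field times Galois $S_3$-field, conductor--discriminant formula, local densities at $2$, $3$, $\infty$, and the tame Euler factor from the three order-$2$ classes) matches the paper's, but your plan for the main term has a genuine gap: you place the analytic weight on the inner sum over $\chi$ at fixed $(M_F,M)$ and assert that the technology behind Theorems~\ref{thm:main-} and~\ref{thm:main+} delivers that asymptotic. At fixed $(M_F,M)$ there is essentially nothing to average: for the pairs dominating the count one has $|d_{M_F}d_M|$ within a power of $\log X$ of $X^{1/6}$, so the conductor bound $\mathrm{N}(\mathfrak{f}_\chi)^{2/3}\leq X^{1/6}/|d_{M_F}d_M|$ is at most a power of $\log X$ and the inner sum is just an essentially unramified $3$-torsion count of size about $h_3(d_{M_F})$ (compare Lemma~\ref{lem:bounded-q2-q3}, which shows conductors beyond $(\log X)^{5}$ are negligible). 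The real analytic content is the sum over $M_F$ of this $h_3$-type data weighted by the number of admissible $M$, and the ``non-coprime case'' you relegate to a secondary obstacle is precisely the crux: primes dividing both $d_M$ and $d_{M_F}$ contribute exponent $6$ rather than $12$, so the constraint involves $[\alpha',\beta']^6$ and the count of $M$ at fixed $M_F$ carries a divisor factor $\tau(\beta')$ -- this is the source of the second logarithm and of the $3/p$ in the Euler factor, not any ``divisor-sum structure internal to the inner asymptotic.'' One must therefore evaluate $\sum_{\beta'\leq t}\mathfrak{m}_\beta(q,\Sigma)\tau(\beta')$ uniformly in $q$, $e$, $\Sigma$ (the paper's Theorem~\ref{tmAverage}), and opening $\tau$ imposes divisibility conditions $d\mid\beta'$ with $d$ up to $\sqrt{t}$: the square-root barrier. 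This is exactly where the improved level of distribution of Theorem~\ref{thm:ld} and the Hooley Delta estimate of Lemma~\ref{hooleylemma} are indispensable; your stated main obstacle (``prescribed local conditions at primes dividing $d_M$, uniformly in the local data'') does not capture this modulus-aspect uniformity, and without it the argument does not close.

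Two smaller points. The decomposition $K=F\cdot M$ is not unique: $F$ (the fixed field of the center) is, but there are two quadratic $M$ with $K=F\cdot M$ (in the paper's coordinates, $\Q(\sqrt{\alpha})$ and $\Q(\sqrt{\alpha\beta})$), so the overcounting is not only the factor $2$ from $\chi\leftrightarrow\chi^{-1}$; since the theorem asserts an exact constant, this bookkeeping matters. You also need tail estimates truncating the conductor and restricting the range of $d_{M_F}$ before any main-term analysis applies (the paper's Lemmas~\ref{lem:extra_cont}--\ref{lem:d-bound}, which rest on the sieve bound \eqref{eqn:ckps-bound}); these are absent from the proposal.
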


We check in \S\S \ref{ssLS1}-\ref{ssLS2} that the leading constant in Theorem \ref{tMalle} agrees with a recent prediction by Loughran--Santens \cite[Conjecture 1.3(1)]{LS}. An asymptotic for sextic non-Galois $D_6$-extensions is given in \cite{MTTW} by Masri--Thorne--Tsai--Wang.  Additionally, an asymptotic formula for the number of Galois $D_4$-extensions was recently given by Shankar--Varma \cite{ShankarVarma}.
One important feature of Theorem~\ref{tMalle} is that, in its regular degree $12$ representation, the group $D_6$ is not \emph{concentrated} in the language of \cite{ALOWW}.  Prior to our work, the only known cases of Malle's conjecture for nonabelian unconcentrated groups were for $S_3$-cubics (Davenport--Heilbronn \cite{DH}), $S_4$-quartics and $S_5$-quintics (Bhargava \cites{Bhargava-Quartic,Bhargava-Quintic}), Galois $S_3$-extensions (Bhargava--Wood \cite{BhargavaWood}, Belabas--Fouvry \cite{BelabasFouvry}), and the aforementioned work of Shankar--Varma on Galois $D_4$-extensions.  In particular, such results are almost as hard to come by as cases of the Cohen--Lenstra--Gerth heuristics.

Among several technical inputs into the proof, we highlight an improved level of distribution result for cubic fields and $3$-torsion, with respect to divisibility of the discriminant. The following is an example of what we prove:

\begin{theorem}\label{thm:lod_sample}
There is a multiplicative function $f(d)$ so that
\[
\sum_{\substack{d < X^{1/2} (\log X)^{-5} \\ \textnormal{ squarefree}}} \bigg| \sum_{\substack{|D| \leq X \\ d \mid D}} h_3(D) - \frac{10}{\pi^2} f(d) X\bigg| = o(X).
\]
\end{theorem}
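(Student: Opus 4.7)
My strategy is to reduce the theorem to a level-of-distribution statement for cubic number fields, and then to establish the latter via a quantitative refinement of the Davenport--Heilbronn method. The starting point is the class-field-theoretic identity
\[
h_3(D) \;=\; 1 + 2\, N_3(D) \qquad (D \text{ fundamental}),
\]
where $N_3(D) = \#\{L/\Q \text{ cubic} : \Disc(L) = D\}$, coming from the bijection between index-$3$ subgroups of $\Cl(\Q(\sqrt{D}))[3]$ and unramified cyclic cubic extensions of $\Q(\sqrt{D})$, the latter being exactly the Galois closures of cubic fields of discriminant $D$. The inner sum then splits as a divisor-type sum $\sum_{|D|\leq X,\, d\mid D,\, D \text{ fund}} 1$, whose average over squarefree $d$ is elementary, plus $2\sum_{|\Disc(L)|\leq X,\, d\mid \Disc(L)} 1$, for which one needs the full strength of the method.

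To count cubic fields with $d \mid \Disc(L)$ I would invoke the Delone--Faddeev correspondence: integral binary cubic forms modulo the $\GL_2(\Z)$-action parametrize cubic rings with matching discriminants, and cubic fields correspond to irreducible, locally maximal orbits. For squarefree $d$, the condition $d \mid \Disc$ decomposes as a product over $p \mid d$ of a $\GL_2(\FF_p)$-invariant mod-$p$ condition (the form has a repeated root modulo $p$). Bhargava's averaging of the fundamental-domain integral, combined with the Belabas--Bhargava--Pomerance sieve for local maximality at every prime (with the local densities at $p\mid d$ modified by the imposed divisibility), yields
\[
\sum_{|\Disc(L)|\leq X,\, d\mid \Disc(L)} 1 \;=\; g(d) X + E(X, d)
\]
for a multiplicative $g(d)$ built from explicit local factors; the $f(d)$ of the theorem is then the corresponding combination of $g(d)$ with the divisor-sum contribution, and its multiplicativity is a formal consequence.

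The core task is bounding $\sum_{d \le Y,\ \mathrm{sqfree}} |E(X,d)|$ for $Y = X^{1/2}(\log X)^{-5}$. For each fixed $d$, tracking the $d$-dependence through a Taniguchi--Thorne-type analysis of the Shintani zeta function twisted by the mod-$p$ conditions at $p\mid d$ yields a power-saving individual bound of shape $X^{\alpha}h(d)$ with $\alpha<1$; this is insufficient pointwise at $d \asymp X^{1/2}$, so I would exploit averaging in $d$ via Cauchy--Schwarz against a second-moment estimate for $E(X,d)$, itself obtainable by opening the square and invoking either Poisson summation on the integral lattice of binary cubic forms or mean-value bounds on vertical lines for the twisted Shintani zeta function. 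The cusp of the fundamental domain --- which parametrizes reducible forms and is the classical bottleneck --- requires separate treatment: imposing $d\mid \Disc$ restricts these forms to a sublattice, saving roughly a factor $d^{-1/2}$, and this is exactly what pins the natural barrier at $Y = X^{1/2}$.

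The main obstacle, and almost certainly the technical heart of the proof, is the uniform cusp analysis: extracting the $d^{-1/2}$ saving uniformly in $d$, simultaneously applying the maximality sieve at \emph{all} primes (including those dividing $d$, where the local densities are altered), and controlling the accumulated sieve losses so that the whole sum comes to $o(X)$. The polylogarithmic slack $(\log X)^{-5}$ in the range of $d$ is, I expect, exactly the room needed to absorb the logarithmic losses from the sieve (compare the slack needed in the fundamental lemma of a Selberg-type sieve), and the proof should navigate these losses delicately in the presence of the divisibility conditions.
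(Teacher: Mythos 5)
Your reduction to counting cubic fields (via $h_3(D)-1$ being twice the number of cubic fields of fundamental discriminant $D$), the Delone--Faddeev parametrization, the maximality sieve with modified local densities at $p\mid d$, and the identification of the multiplicative main term $f(d)$ all match the paper's framework. The gap is in the core step: how to improve the Bhargava--Taniguchi--Thorne level $X^{1/2-\epsilon}$ to $X^{1/2}(\log X)^{-5}$. You propose Cauchy--Schwarz against a second-moment estimate for $E(X,d)$, to be obtained ``by opening the square'' and invoking Poisson summation or mean-value bounds on vertical lines for the twisted Shintani zeta functions. No such mean-value or large-sieve estimate in the $d$-aspect (level aspect) exists in the literature, and you give no argument for one; the paper explicitly remarks that even level-aspect subconvexity for these Shintani zeta functions is open and that this is precisely why a true level of distribution $X^{1/2}$ seems out of reach. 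So the decisive analytic input of your outline is asserted rather than proved, and it is not the input the theorem actually rests on.

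The paper's route is different in exactly the place where your sketch is vague, and it uses two ingredients absent from your proposal. After applying the functional equation and Landau's method as in BTT, the problematic quantity is the dual-side count $\sum_{d\in[D,2D]} N(Y,|\widehat{\Phi_d}|)$. This is controlled by (i) the exact formula for the Fourier transform $\widehat{\Phi_p}$ (Mori, Taniguchi--Thorne): it has size roughly $p^{-2}$ on forms with $p\mid \Disc(x)$ and $p^{-3}$ otherwise, so averaging over $d$ turns the dual count into a divisor-weighted count $\sum_{|\Disc(x)|<Y}\tau(|\Disc(x)|)$; and (ii) the upper bound $\sum_{|D|<Y}h_3(D)\tau(D)\ll Y\log Y$ of Chan--Koymans--Pagano--Sofos (itself obtained by a sieve from the older level of distribution), extended to all cubic rings, which bounds that divisor sum by $Y\log Y$. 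This deliberately ``circular-looking'' reuse of the earlier sieve bound is the mechanism highlighted in the introduction, and nothing in your outline substitutes for it. Finally, the reducible/cusp contribution is not where the difficulty lies: it is removed by the standard preliminary reduction to counting $C_3$-cubic and quadratic fields, and the $X^{1/2}$ barrier comes from the conductor $(12dq^2m^2)^4$ in the functional equation, not from a $d^{-1/2}$ saving on a sublattice in the cusp, so that heuristic does not capture the bookkeeping the proof must actually carry out.
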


Here the upper limit $X^{1/2} (\log X)^{-5}$ on $d$ is called a {\itshape level of distribution}. We refer to Theorem \ref{thm:ld} for a precise statement which generalizes Theorem \ref{thm:lod_sample} simultaneously in several directions: to allow for counting cubic fields of discriminant $D$, in place of $h_3(D)$; to restrict to only positive or negative discriminants; to allow `local conditions' within a certain range; to save a power
of $\log X$ in the error term; and, in each error estimate to allow $X$ to vary with $d$.

More precise asymptotics for $\sum_D h_3(D)$ and related counts would include secondary terms of order $X^{5/6}$, as proved in \cite{BST} and \cite{TT_rc}. As these secondary terms decay with $d$, they may be subsumed into the error term above. 

A statement similar to Theorem \ref{thm:lod_sample} was proved by Bhargava--Taniguchi--Thorne \cite{BTT}, with a level of distribution of $X^{1/2 - \epsilon}$. The improvements of Theorems~\ref{thm:lod_sample} and Theorem~\ref{thm:ld} over this are rather modest, but they turn out to be critical to the proofs of our other theorems. Another crucial technical tool is the recent work by 
La Bret{\`e}che--Tenenbaum~\cite{brete2}
and  Koukoulopoulos--Tao \cite{koukoutao}   
on the average of the Hooley Delta function, defined by  
$$\Delta(n)=\sup_{u\geq 1 }\#\{d\in \N\cap [u,\mathrm e u]: d\mid n\}, \ \ \ n \in \mathbb N.$$ 
This is used to handle moduli in the interval  $[X^{1/2}(\log X)^{-5}, X^{1/2}]$. The use of Hooley's Delta function  appears to be new in the context of the Cohen--Lenstra heuristics and field counting.
%The improvements of Theorems \ref{thm:lod_sample} and \ref{thm:ld} are rather modest, but they turn out to be critical to the proof.
%Robert: Maybe, ``The improvements of Theorems~\ref{thm:lod_sample} and Theorem~\ref{thm:ld} \textbf{over this} are rather modest, but they turn out to be critical to the proof\textbf{s of our other theorems}.''

\subsection{Overview of this paper}
\usetikzlibrary{positioning,arrows.meta,quotes}

We now sketch the proof of Theorem \ref{thm:main-} and give an overview of the rest of this paper.

In order to obtain an asymptotic for $h_6(D)$, recall that by Gauss's genus theory, we have
\[
h^+_2(D) = 2^{\omega(D) - 1},
\]
where $\omega(D)$ is the number of prime divisors of $D$. When $D < 0$ we have $h_2(D) = h^+_2(D)$, and when $D > 0$ we have 
\begin{equation}
\label{eh2D}
h_2(D) = 
\begin{cases}
h^+_2(D) &\text{if } p \mid D \Rightarrow p \equiv 1, 2 \bmod 4, \\
\frac{1}{2} h^+_2(D) &\text{otherwise.}
\end{cases}
\end{equation}
Since $h_2(D)$ is closely connected to the divisor function $\tau(D)$, we essentially have 
$$
\sum_{D \leq X} h_6(D) \approx \sum_{D \leq X} \tau(D) h_3(D) \approx 2 \sum_{d \leq \sqrt{X}} \sum_{\substack{D \leq X \\ d \mid D }} h_3(D)
$$
by Dirichlet's hyperbola method. If a level of distribution of $X^{1/2}$ could be proved for $h_3(D)$, then our task would have been straightforward.

In \cites{4author, 4authorapplications}, Chan--Koymans--Pagano--Sofos used a sieve argument going back to Erd\H{o}s \cite{Erdos2}, together with the level of distribution of $X^{1/2 - \epsilon}$ proved in \cite{BTT}, to obtain the upper bound 
\begin{equation}\label{eq:prev_bound}
\sum_{|D| \leq X} h_6(D) \ll X \log X.
\end{equation}
In fact, any level of distribution will do for this argument. Our current knowledge is aptly summarized by the following diagram.
\begin{figure}[!h]
\centering
\begin{tikzpicture}
 [block/.style={draw,minimum width=#1,minimum height=2em},
 block/.default=15em,high/.style={minimum height=3em},auto,
 node distance=15mm, % initially 1cm
 >=Stealth]
 %node distance=5em,auto]
 % Nodes
 \node[block=15em,high,right] (n1) {Level of distribution $X^{\frac{1}{2} - \epsilon}$ \cite{BTT}};
 \node[block=15em,high,right=of n1] (n2) {Upper bound for $\sum_D h_6(D)$ \cites{4author, 4authorapplications}};
 % Connections
 \draw[->] (n1) -- (n2);
\end{tikzpicture}
\caption{Current state of the art}
\end{figure}

This upper bound is itself an ingredient in Theorems \ref{thm:lod_sample} and \ref{thm:ld}, i.e.~in the improvement of the level of distribution to $X^{1/2} (\log X)^{-5}$. The reason for this seemingly circular reasoning can be seen in the details of our proof of Theorem \ref{thm:ld}, which we give in \S \ref{sLevel}; in brief, after applying Fourier analysis, one obtains error terms that have structural similarities to the main terms. This leaves the terms with $X^{1/2}/(\log X)^5 \leq d \leq X^{1/2}$, and it turns out that the sieve comes to the rescue again for this range. Fortunately, it is enough to obtain a (close to) sharp upper bound in this range, and this is done in \S \ref{ss6Trick} by the sieve techniques from \cites{4author, 4authorapplications} combined with work of Koukoulopoulos--Tao \cite{koukoutao} and La Bret{\`e}che--Tenenbaum~\cite{brete2} on the Hooley Delta function. Once more, the sieve uses as a vital input that $h_3(D)$ has a level of distribution.

\begin{figure}[!h]
\centering
\begin{tikzpicture}
 [block/.style={draw,minimum width=#1,minimum height=2em},
 block/.default=15em,high/.style={minimum height=3em},auto,
 node distance=25mm, % initially 1cm
 >=Stealth]
 %node distance=5em,auto]
 % Nodes
 \node[block=16em,high,right] (n1) {Level of distribution $X^{\frac{1}{2} - \epsilon}$ \cite{BTT}};
 \node[block=16em,high,right=of n1] (n2) {Upper bound for $\sum_D h_6(D)$ \cites{4author, 4authorapplications}};
 \node[block=16em,below of = n1] (n3) {Level of distribution $X^{1/2}/(\log X)^A$};
 \node[block=16em,below of = n2] (n4) {Asymptotic for $\sum_D h_6(D)$};
 % Connections
 \draw[->] (n1) -- (n2) node[midway] {\tiny{Sieve}} ;
 \draw[->] (n2) -- (n3) node[midway, yshift = 0.02in, xshift = -0.9in] {\tiny{\S \ref{sLevel}}};
 \draw[->] (n3) -- (n4) node[midway, above=0.15in] {\tiny{\S \ref{ss6Main}}} node[midway, above=0.05in] {\tiny{Small divisors}};
 \draw[->] (n1) -- (n4) node[midway, yshift = -0.05in, xshift = 0.4in] {\tiny{\S \ref{ss6Trick}}} node[midway, yshift = -0.15in, xshift = 0.4in] {\tiny{Large divisors}};
\end{tikzpicture}
\caption{Our argument}
\end{figure}
We then prove Theorems \ref{thm:main-} and \ref{thm:main+} in \S \ref{ss6Main}. Our arguments are flexible enough to obtain what amounts to an average of $\sum_D h_6(D)$ with local conditions; we prove a technical result along these lines in \S \ref{sslocal}. This will allow us to also prove Malle's conjecture for Galois $D_6$-extensions, which we carry out in \S \ref{sec:malle}. Once more, the sieve comes to the rescue in an essential way for various vital tail estimates.

%\begin{center}
%\begin{tikzpicture}
% [block/.style={draw,minimum width=#1,minimum height=2em},
% block/.default=10em,high/.style={minimum height=3em},auto,
% node distance=5mm, % initially 1cm
% >=Stealth]
% %node distance=5em,auto]
% % Nodes
% \node[block=8em,high] (n1) {Level of distribution $\frac{1}{2} - \epsilon$};
% \node[block=3em,high,right=of n1] (n2) {Sensor};
% \node[block=3em,high,right=of n2] (n3) {CANoe};
% \node[right=1cm of n3,align=center] (n4) {Measurement\\File};
% % Connections
% \draw[->] (n1) -- (n2);
% \draw[->] (n2) -- (n3);
% \draw[->] (n3) -- (n4);
% \coordinate (x) at ([yshift=-1cm]n3.south);
% \coordinate (y) at (start |- x);
% \draw (start) -- (y) (x) edge[->] (n3.south) (x) edge["EXAMPLE"] (y);
%\end{tikzpicture}
%\end{center}

\subsection*{Acknowledgements}
We would like to thank Stephanie Chan, Daniel Loughran, Carlo Pagano, Tim Santens, and Takashi Taniguchi for helpful comments and suggestions.

PK gratefully acknowledges the support of Dr. Max R\"ossler, the Walter Haefner Foundation and the ETH Z\"urich Foundation. PK also acknowledges the support of the Dutch Research Council (NWO) through the Veni grant ``New methods in arithmetic statistics''.
RJLO was partially supported by a National Science Foundation grant, DMS-2200760, and by the Office of the Vice Chancellor for Research at the University of Wisconsin-Madison with funding from the Wisconsin Alumni Research Foundation. FT was partially supported by the National Science Foundation grants DMS-2101874 and DMS-2503694 and by the Lodha Mathematical Sciences Institute. 

\section{The level of distribution of cubic fields}
\label{sLevel}
In this section we prove the level of distribution estimate of Theorem \ref{thm:lod_sample}, in a more general form. As mentioned previously, this is 
a modest improvement over a similar
estimate proved by Bhargava--Taniguchi--Thorne \cite{BTT}, which in turn built on the previous works \cites{DH, belabas, BBP, BST, TT_rc}).

The proof consists essentially of a tweaking of the argument in \cite{BTT} and its technical core. 
Here, we will not duplicate all the exposition
from \cite{BTT} but instead give a briefer overview which we hope will be reasonably comprehensible, while eliminating some complications
which are extraneous to the case considered here.

We write 
\[
	N_3(X) := \# \{ K \ : \ [K : \Q] = 3, \ |\Disc(K)| < X \}
\]
and, for each integer $d$,
\begin{equation}\label{eq:def_n3xd}
N_3(X, d) := \# \{ K : \ [K : \Q] = 3, \ |\Disc(K)| < X, \ d \mid \Disc(X) \}.
\end{equation}
Here and throughout, we count cubic fields up to isomorphism.
We define an error term $E(X, d)$ by writing
$$
N_3(X, d) = M \frac{g(d)}{d} X + E(X, d),
$$
where $g(d)$ is multiplicative with
$$
M := \frac{1}{3 \zeta(3)}, \ \ \ \ g(p) := 
\frac{p^2 + p}{p^2 + p + 1}.
$$
We also write
\begin{equation}\label{def:exd_alt}
N^{\ast}_3(X, d) = M^* \frac{g^*(d)}{d} X + E^\ast(X, d),
\end{equation}
where the asterisk denotes that we count only those cubic fields whose discriminants are fundamental, and
\begin{equation}\label{def:omega1}
M^* := \frac{2}{\pi^2}, \ \ \ \ g^*(p) := 
\frac{p}{p + 1}.
\end{equation}
We will append $+$ or $-$ to any of the above notations when counting positive or negative discriminants only, except for $g(d)$ and $g^*(d)$ which 
do not change with this restriction. We have
\[
M^+ = \frac{1}{4} M, \ \ 
M^- = \frac{3}{4} M, \ \ 
M^{*,+} = \frac{1}{4} M^*, \ \ 
M^{*,-} = \frac{3}{4} M^*. \ \ 
\]
Finally, for a prime $p$, we define a {\itshape local specification} $\pmod p$
to be a subset $\Sigma_p$ of the cubic \'etale algebras over $\Q_p$; the number of such algebras is finite and bounded uniformly over $p$.
We say that a cubic field $K$ satisfies $\Sigma_p$ if $K \otimes_{\Q} \Q_p \in \Sigma_p$. 
Similarly, for a squarefree integer $m$, we define a local specification $\Sigma_m \pmod m$ to be a local specification for each prime dividing $m$.

We write $N_3(X, d, \Sigma_m)$ for the count in \eqref{eq:def_n3xd} where it is additionally required that each $K$ being counted satisfy $\Sigma_m$, $E(X, d, \Sigma_m)$ for the associated error term, and analogously for the other notations defined above. In the case of $N_3^*(X, d, \Sigma_m)$, for a cubic field $K$, we reinterpret `$K$ has fundamental discriminant' to mean that `$K \otimes_{\Q} \Q_p$ is not totally ramified over $\Q_p$ for all primes $p \nmid m$'. In particular, depending on $\Sigma_m$, $N_3^*(X, d, \Sigma_m)$ may count fields with discriminant divisible by $p^2$ for $p \mid m$.

% In the case of $N_3^*(X, d, \Sigma_m)$, for a cubic field $K$, we reinterpret `$K$ has fundamental discriminant' to mean that `$K \otimes_{\Q} \Q_p$ is not totally ramified over $\Q_p$ for any prime $p$', and only impose this condition for primes dividing $m$.

See \cites{TT_rc, BTT} for a complete description of the main terms in such counts.

\smallskip

We arrive now at our main result:

\begin{theorem}\label{thm:ld}
We have for each $B, k \geq 0$ that 
\begin{equation}
\label{eq:lod2}
%~ \sum_{\substack{\Sigma_m \\ m < (\log X)^B}} \\ \sum_{\substack{d < X^{1/2} (\log X)^{-6-9B-5k} \\ d \textnormal{ squarefree}}} |E(X_{d,m}, d, \Sigma_m)| = O_{B, k}(X (\log X)^{-k}),
\sum_{m < (\log X)^B} \sum_{\Sigma_m \pmod{m}} \\ \sum_{\substack{d < X^{1/2} (\log X)^{-5-9B-5k} \\ d \textnormal{ squarefree}}} |E(X_{d,m}, d, \Sigma_m)| = O_{B, k}(X (\log X)^{-k}),
\end{equation}
where:
\begin{itemize}
\item The sum over $\Sigma_m$ is over all local specifications to moduli bounded by $(\log X)^B$. If $B = 0$, by abuse of notation the sum is to be understood as over local specifications $\pmod{6}$ only.
\item The $X_{d, m}$ are real numbers bounded by $X$ for each $d$ and $m$, satisfying 
for some constant $C > 0$ that
\[
d' \in (d/2, 2d), \ m' \in (m/2, 2m) \ \ \Longrightarrow \ \ X_{d', m'} \in (X_{d, m}/C, CX_{d, m}).
\]
\end{itemize}
The same also holds with $E^*(X_{d, m}, d, \Sigma_m)$ in place of $E(X_{d, m}, d, \Sigma_m)$, as defined in \eqref{def:exd_alt}, and/or when counting fields of positive or negative discriminant only.
\end{theorem}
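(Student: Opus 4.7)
The plan is to follow the Fourier-theoretic framework of Bhargava--Taniguchi--Thorne \cite{BTT} and make three refinements on top of it: extract the sharp $(\log X)^{-k}$ decay in the error, incorporate local specifications $\Sigma_m$ for $m \leq (\log X)^B$, and allow the truncation $X_{d,m}$ to vary modestly with $d$ and $m$. I would start by invoking the Delone--Faddeev parametrization of cubic rings by $\mathrm{GL}_2(\Z)$-classes of integral binary cubic forms, followed by an Ekedahl-style sieve to restrict to maximal orders (i.e.\ to cubic fields). Both the divisibility $d \mid \Disc(K)$ and the local specification $\Sigma_m$ translate into congruence constraints on the binary cubic form modulo $dm$. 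Applying Davenport's averaging of the characteristic function of a bounded region over a $\mathrm{GL}_2(\Z)$-fundamental domain, followed by Poisson summation on the sublattice of $\Z^4$ of index $(dm)^4$, separates the count into a zero-frequency main term matching $M g(d) X/d$ (resp.\ $M^* g^*(d) X/d$) times the local density at primes dividing $m$, plus a dual sum over nonzero frequencies.

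The key point is to sum the error term in the Poisson formula over $d$ \emph{before} bounding individual terms. After the Poisson step, one is left with exponential sums modulo $dm$ twisted by smooth weights at scale $X^{1/4}/d^{1/2}$; summing over $d$ squarefree with $d < X^{1/2}(\log X)^{-5-9B-5k}$ produces a divisor-style sum in which cancellation among the cubic Gauss/Kloosterman sums (the $\Phi$-type sums in the technical core of \cite{BTT}) delivers a bound of shape $O_{B,k}\bigl(X (\log X)^{-k}\bigr)$, provided one tracks constants carefully enough to save $\log$-factors rather than $X^\epsilon$. This is the step where the improvement over \cite{BTT} happens: instead of estimating each exponential sum by its trivial bound times $d^\epsilon$, one uses standard estimates on averages of multiplicative functions together with the observation that the Shintani-type averaged main term $g(d)/d$ is itself extremely regular on average.

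The extra local conditions $\Sigma_m$ are absorbed by replacing the modulus $d$ by $dm$ in every stage of the analysis. The number of local specifications $\Sigma_m$ is bounded polynomially in $m$, and each specification contributes an extra multiplicative factor bounded polynomially in $m$, so that the total loss from the double sum over $m$ and over $\Sigma_m$ is at most a power of $\log X$; the exponent $9B$ inside $5+9B+5k$ is calibrated precisely to absorb this loss. The permitted variability $X_{d,m} \in (X_{d',m'}/C, CX_{d',m'})$ as $d, m$ double comes for free from Davenport's smoothed averaging, since all bounds depend on $X_{d,m}$ only through a smooth envelope. The $+/-$ and $*$-versions are obtained by restricting the fundamental domain to the appropriate sign of the discriminant, and by imposing the ``not overramified at primes $p \nmid m$'' condition through the Ekedahl sieve, neither of which affects the Fourier-analytic argument beyond changing the local main-term constants.

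The main obstacle I anticipate is bookkeeping in the nontrivial exponential sums modulo $dm$: one must show that, uniformly in $m \leq (\log X)^B$, they exhibit enough cancellation on average over $d$ to beat the divisor-function losses inherent in summing over squarefree $d$ up to $X^{1/2}/(\log X)^{5+9B+5k}$. This requires a quantitative refinement of the ``twisted Shintani zeta function'' estimates from \cite{BTT}, together with a uniform treatment of the singular cubic locus (the locus where the Fourier transform picks up extra weight). Once this is in place, the remainder of the argument is bookkeeping; the sharp $(\log X)^{-k}$ error then feeds directly into the sieve inputs used in \S\ref{ss6Main} and \S\ref{ss6Trick}.
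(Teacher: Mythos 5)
There is a genuine gap at the crux of your argument. You assert that, after Poisson summation, the required saving comes from ``cancellation among the cubic Gauss/Kloosterman sums'' when the dual sums are averaged over $d$, with the improvement over \cite{BTT} obtained by ``tracking constants carefully enough to save $\log$-factors rather than $X^\epsilon$.'' Neither \cite{BTT} nor this paper obtains (or needs) any such cancellation in the level aspect, and no mechanism for it is currently known: exploiting genuine cancellation in the $d$- or $q$-aspect of these dual sums is essentially a level-aspect subconvexity problem for Shintani zeta functions, which the paper explicitly flags as open. Moreover, the $X^\epsilon$ loss in \cite{BTT} does not come from sloppy bookkeeping that more careful constants would repair; it comes from bounding the dual-side count $N(Y,|\widehat{\Phi_{d,q,m}}|)$ with absolute values and divisor-function estimates, so a new arithmetic input is required to remove it.

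The missing idea is precisely that input. The paper's point of departure from \cite{BTT} is the bound on $\widehat{E_2'}$ in \eqref{eq:lc_setup}: using the explicit formula for $\widehat{\Phi_p}$ (Proposition~\ref{prop:mori}, due to Mori and Taniguchi--Thorne), the Fourier transform is large only on the locus $p \mid \Disc(x)$, so the averaged dual count $S(D,Y)$ collapses to a sum of $\tau(|\Disc(x)|)$ over cubic rings. That sum is then bounded by $Y\log Y$ (with no $Y^\epsilon$) via Lemma~\ref{lem:divisor_bound_1}, whose proof rests on the prior sieve-based estimate $\sum_{|D|<Y} h_3(D)\tau(D) \ll Y\log Y$ of Chan--Koymans--Pagano--Sofos (Lemma~\ref{eq:divisor_bound}, i.e.\ \eqref{eq:prev_bound}) together with the Datskovsky--Wright comparison between cubic rings, cubic fields, and $3$-torsion. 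This is the ``seemingly circular'' ingredient highlighted in the introduction, and it is what yields \eqref{eq:hat_e2_bound} and hence the $(\log X)^{-k}$ error after optimizing $Q$ and $A$; your proposal never identifies it, and without it (or some substitute of comparable strength) the Fourier-analytic bookkeeping you describe cannot reach the stated level of distribution with only logarithmic losses. The remaining refinements you mention (summing over $\Sigma_m$, letting $X_{d,m}$ vary within dyadic ranges, the $*$ and $\pm$ variants) are indeed routine, much as in the paper.
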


\begin{remark*}
	For the remainder of this section, we allow implied constants to depend on the parameters $B$ and $k$; we omit them in our big-oh, $\ll$, and similar notation.
	We do so similarly for any implied constants in this section that depend on an arbitrary $\epsilon>0$, and we allow the value of $\epsilon$ to change from one instance to the next. \end{remark*}

With additional effort, the power of $\log X$ could be improved somewhat, but obtaining a level of distribution of $X^{1/2}$ or greater 
seems quite difficult.  This appears closely related to proving a level-aspect subconvexity bound for certain Shintani zeta functions.  There is recent work of Hough and Lee \cite{HL} establishing
$t$-aspect subconvexity for Shintani zeta functions, but we are unaware of any level-aspect results.

\medskip
Our proof is a modification of that in \cite{BTT}, and comprises the remainder of this section. We will give a brief introduction to the relevant background; for the reader's convenience we consistently refer to \cite{BTT}, but note that much of this is in the earlier literature.

Let $V(\Z)$ be the lattice of integral binary cubic forms, equipped with the natural action of $\GL_2(\Z)$. By the Delone--Faddeev correspondence \cite{DF}, originally due to Levi \cite{levi}, the set of $\GL_2(\Z)$-orbits on $V(\Z)$ is in one-to-one correspondence with the set of cubic rings.
For each triple of squarefree integers $q, d, m$ and local specification $\Sigma_m \pmod{m}$, 
we define a $\GL_2(\Z)$-invariant function $\Phi_{d,q,m}\colon V(\mathbb{Z}) \to \mathbb{R}$ as the characteristic function of a set given explicitly below; 
from this, we then define {\itshape Shintani zeta functions}
\begin{equation}\label{eq:shintani}
\xi^{\pm}(s, \Phi_{d, q, m}) := \sum_{\substack{x \in \GL_2(\Z) \backslash V(\Z) \\ \pm \Disc(x) > 0}} \frac{1}{|\Stab(x)|} \Phi_{d, q, m}(x) |\Disc(x)|^{-s}
= \sum^{\ast} \frac{1}{|\Aut(R)|} |\Disc(R)|^{-s}
\end{equation}
%~ where $V(\Z)$ is the lattice of integral binary cubic forms, with an action of $\GL_2(\Z)$. By the Delone--Faddeev correspondence \cite{DF}, originally due to Levi \cite{levi}, the set of $\GL_2(\Z)$-orbits on $V(\Z)$ is in one-to-one correspondence with the set of cubic rings, and the star indicates that those $R$ counted in \eqref{eq:shintani} are subject to the conditions below.
where the star on the second summation indicates that the cubic rings $R$ run only over those corresponding to $x \in V(\Z)$ in the support of $\Phi_{q,d,m}$ whose discriminant has sign matching that of the superscript $\pm$ in $\xi^{\pm}$. 

As mentioned above, the function $\Phi_{d, q, m}(x)$ is the characteristic function of a $\GL_2(\Z)$-invariant subset of $V(\Z)$.  The definition of this subset depends on whether we are aiming to prove Theorem~\ref{thm:ld} with $E(X_{d,m},d,\Sigma_m)$ (as in \eqref{eq:lod2}) or its variant with $E^*(X_{d,m},d,\Sigma_m)$.  In the first case, the underlying set consists of those $x \in V(\Z)$ such that:

\begin{enumerate}[(i)]
	\item the discriminant of $x$ is divisible by $d$;
	\item  $x$ is `nonmaximal at $q$',
		i.e.~the corresponding cubic ring $R$ satisfies that $R \otimes_\Z \Z_p$ is contained in a larger cubic ring over $\Z_p$ for {\itshape each} prime dividing $q$; and
%~ \item If instead estimating $N^*_3(X, d, \Sigma_m)$ (i.e.~counting fundamental discriminants only), for each prime divisor $p$ of $q$, $x$ either satisfies the same nonmaximality condition 
%~ at $p$ or has a triple root modulo $p$.
	\item the cubic ring $R$ corresponding to $x$ satisfies $\Sigma_m$. (Although we do not indicate it explicitly in the notation, this condition
			depends on the choice of $\Sigma_m$ and not just $m$.)
\end{enumerate}
If we are instead aiming to prove Theorem~\ref{thm:ld} for $E^*(X_{d,m},d,\Sigma_m)$, the set is the same, except that in place of (ii) we demand for each prime $p \mid q$ that either $x$ is nonmaximal at $p$ or that $x$ has a triple root $\pmod{p}$.
By the Davenport--Heilbronn correspondence and further discussion in \cite[\S 6.2]{TT_rc}, in either case, these conditions depend only on $x \pmod{12dq^2m^2}$.
When we omit any of the subscripts from $\Phi_{d, q, m}$ below, this indicates that the corresponding condition is omitted. Also note that $d, q, m$ are not required to
be coprime.

Work of Shintani \cite{shintani}, F. Sat\={o} \cite{f_sato}, Datskovsky and Wright \cites{DW1, DW2}, and Taniguchi--Thorne \cite{TT_L} 
establishes: analytic continuation for these zeta functions, holomorphic apart from simple poles at $s = 1$ and $s = 5/6$; explicit formulas
for their residues; and a functional equation of the form
$$
\Delta(1-s)\cdot T\cdot \xi(1-s, \Phi_{d, q, m} )= (12d q^2m^2)^{4s} \left(\begin{smallmatrix}3&0\\0&-3\end{smallmatrix}\right) \cdot \Delta(s)\cdot T\cdot \xi(s, \widehat{\Phi_{d, q, m}}),
$$
where
$T=\left(\begin{smallmatrix}\sqrt3&1\\\sqrt3&-1\end{smallmatrix}\right)$,
$\xi(s, \Theta):=\left(\begin{smallmatrix}\xi^+(s, \Theta)\\\xi^-(s, \Theta)\end{smallmatrix}\right)$, and $\Delta(s)$ is a diagonal matrix of gamma factors. Standard
methods of Landau \cites{Landau1912, Landau1915}, as further extended by Chandrasekharan and Narasimhan \cite{CN} and Lowry-Duda--Taniguchi--Thorne \cite{LDTT}, then
yield partial sum estimates for the zeta functions in question. 

The Shintani zeta functions \eqref{eq:shintani} count both reducible and irreducible rings, and they incorporate a weighting of $|\Aut(R)|^{-1}$.
When $R$ is maximal with $\Disc(R) \neq 0$ we have $|\Aut(R)| \leq 6$, and $|\Aut(R)| = 1$ when $R$ is the maximal order of an $S_3$-cubic field. 

We define $N'(X_{d, m}, d, \Sigma_m)$, $E'(X_{d, m}, d, \Sigma_m)$ and their analogues with
respect to the above weighting, and including the reducible rings. Then a preliminary reduction establishes that if $E'(X_{d, m}, d, \Sigma_m)$ satisfies the level of distribution result \eqref{eq:lod2}, then 
$E(X_{d, m}, d, \Sigma_m)$ does as well, and analogously for the other counting functions defined above.
This amounts to counting $C_3$-cubic fields and $C_2$-quadratic fields with adequate error terms, which is easier than the $S_3$-cubic field
problem. The details are carried out in \S 4 and \S 8 of \cite{BTT} and carry over without change. 

We will also apply \eqref{eq:prev_bound}, which was proved in the predecessor paper \cite[Theorem 1.3]{4authorapplications} of Chan--Koymans--Pagano--Sofos, and which we state
equivalently as follows:

\begin{lemma}\label{eq:divisor_bound}
We have that
\[
\sum_{|D| < Y} h_3(D) \tau(D) \ll Y \log Y,
\]
where the sum is over fundamental discriminants, and $\tau(\cdot)$ denotes the divisor function.
\end{lemma}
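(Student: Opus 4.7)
The plan is to derive this lemma as an immediate consequence of the previously established upper bound \eqref{eq:prev_bound}, namely $\sum_{|D| \leq Y} h_6(D) \ll Y \log Y$, via a pointwise comparison of $h_3(D)\tau(D)$ with $h_6(D)$. In effect, the two statements are equivalent up to multiplicative constants, so all of the analytic work — the Erd\H{o}s-type sieve driven by the level of distribution $X^{1/2-\epsilon}$ from \cite{BTT} — has already been carried out in the predecessor paper \cite{4authorapplications}, and nothing beyond elementary genus theory is required here.

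First, because $\gcd(2,3)=1$, the decomposition of the finite abelian group $\Cl(\Q(\sqrt D))[6]$ into its $2$- and $3$-primary parts yields $h_6(D) = h_2(D) h_3(D)$ for every fundamental discriminant $D$. It therefore suffices to prove the pointwise comparison $\tau(D) \ll h_2(D)$ over fundamental discriminants. Second, Gauss's genus theory gives $h_2^+(D) = 2^{\omega(D)-1}$. I would then split by the shape of a fundamental discriminant into the three cases that $D$ is odd squarefree with $D\equiv 1\pmod 4$, that $D=4m$ with $m$ odd squarefree and $m\equiv 3\pmod 4$, or that $D=8m'$ with $m'$ odd squarefree. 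A direct calculation from the multiplicativity of $\tau$ shows that $\tau(D) = 2h_2^+(D)$, $3h_2^+(D)$, and $4h_2^+(D)$ respectively in these three cases, so that $\tau(D) \leq 4 h_2^+(D)$ in all cases. Combined with the elementary bound $h_2^+(D) \leq 2 h_2(D)$ recorded in \eqref{eh2D}, this gives
\[
h_3(D)\tau(D) \;\leq\; 4\, h_3(D)\, h_2^+(D) \;\leq\; 8\, h_6(D)
\]
for every fundamental discriminant $D$. Summing over $|D|<Y$ and invoking \eqref{eq:prev_bound} concludes the proof.

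The only subtlety — hardly a genuine obstacle — is the casework for the shape of a fundamental discriminant and the two-to-one relationship between the narrow and ordinary class groups in the real case. All of the analytic difficulty has already been absorbed into the cited bound \eqref{eq:prev_bound}, and no new sieve argument is required at this step of the paper.
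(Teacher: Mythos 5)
Your proposal is correct and matches the paper's treatment: the paper simply states this lemma as an equivalent reformulation of \eqref{eq:prev_bound}, i.e.\ of \cite[Theorem 1.3]{4authorapplications}, the equivalence being exactly the genus-theory comparison $h_6(D)=h_2(D)h_3(D)$ with $\tau(D)\asymp h_2^+(D)\asymp h_2(D)$ that you spell out. Your casework on the shape of fundamental discriminants and the factor between narrow and ordinary $2$-torsion is accurate, so nothing is missing.
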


We now broaden this result to include all cubic rings, and also obtain a variant that will be needed in a tail estimate. From now on we shall adopt the convention that $\tau(0) := 0$.

\begin{lemma}\label{lem:divisor_bound_1}
With the same notation, we have 
$$
\sum_{|\Disc(x)| < Y} \tau(|\Disc(x)|) \ll Y \log Y,
$$
where the sum is over $\GL_2(\Z)$-equivalence classes of binary cubic forms, or equivalently (by Delone--Faddeev) over cubic rings.

Moreover, for every squarefree integer $q$ we have that
\begin{equation}
\label{eq:divisor_bound_2}
\sum_{\substack{|\Disc(x)| < Y \\ q^2 \mid \Disc(x)}} \tau(|\Disc(x)|) \ll \frac{1}{q^{2 - \epsilon}} Y \log Y.
\end{equation}
\end{lemma}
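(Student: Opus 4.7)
The idea is to deduce both statements from a uniform upper bound on the number of cubic rings whose discriminant is divisible by a given integer $d$, combined with the elementary divisor pairing $\tau(n)\le 2\#\{d:d\mid n,\ d\le \sqrt n\}$. Applied with $n=|\Disc(x)|$ and with the order of summation swapped, this reduces the first claim to showing
\[
\sum_{d\le Y^{1/2}} \#\{x\in\GL_2(\Z)\backslash V(\Z):\, d\mid \Disc(x),\ |\Disc(x)|<Y\}\ll Y\log Y.
\]
I would bound the inner count by $O(Yg(d)/d)$ uniformly in $d\le Y^{1/2}$, where $g$ is a bounded multiplicative function with $g(p)=O(1)$ at every prime; this reflects the local density of $x\in V(\FF_p)$ with $p\mid\Disc(x)$, which is $O(1/p)$. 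Such a uniform bound is accessible either via the standard Davenport--Heilbronn lattice-point count in a fundamental domain for $\GL_2(\Z)$ acting on $V(\R)$, or via the Shintani zeta function methods of \cite{BTT} applied to cubic rings rather than cubic fields. Summing $\sum_{d\le Y^{1/2}}g(d)/d\ll \log Y$ by Mertens-type estimates then completes the first claim.

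For the second claim I repeat the argument, imposing additionally $q^2\mid\Disc(x)$. The key input is that at each prime $p\mid q$ the density of $x\in V(\Z/p^2\Z)$ with $p^2\mid\Disc(x)$ is $\Theta(p^{-2})$, with contributions from two sources: forms whose reduction modulo $p$ has a triple root (density $\sim p^{-2}$, automatically satisfying $p^2\mid\Disc$) and forms with a simple double root modulo $p$ together with a second-order vanishing condition (density $\sim p^{-2}$). Multiplicativity across primes $p\mid q$ yields a joint local density $\ll q^{-2+\epsilon}$ for squarefree $q$, and combining with the $d\mid\Disc$ constraint and summing as above produces $\ll Y(\log Y)/q^{2-\epsilon}$.

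The main obstacle is securing the uniform upper bound $\#\{R:d\mid\Disc(R),\ |\Disc(R)|<Y\}\ll Yg(d)/d$ across the full range $d\le Y^{1/2}$. For moderate $d$ this is a routine lattice-point estimate, but as $d$ approaches $Y^{1/2}$ one must handle the cuspidal region of the fundamental domain carefully, where reducible binary cubic forms lie and lattice-point irregularities can occur. Since only an upper bound---rather than an asymptotic---is required and the relevant local density is small, this can be accomplished either by direct geometric arguments in the relevant wedge of $V(\R)$ or by invoking the upper-bound half of the Shintani zeta function partial-sum estimates from \cite{BTT}.
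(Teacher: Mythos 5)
Your reduction via $\tau(n)\le 2\#\{d\mid n: d\le\sqrt n\}$ simply rewrites the claim as $\sum_{d\le Y^{1/2}} \#\{x: d\mid\Disc(x),\ |\Disc(x)|<Y\}\ll Y\log Y$, so all of the content sits in your asserted uniform bound $\#\{x : d\mid\Disc(x),\ |\Disc(x)|<Y\}\ll Y g(d)/d$ for \emph{every} $d\le Y^{1/2}$ — and that bound is exactly the hard point, not a routine one. Fundamental-domain lattice-point counting controls the congruence condition $d\mid\Disc(x)$ only while the modulus is small compared to the linear size of the region (roughly $d\ll Y^{1/4}$); beyond that the per-residue-class error terms and the cusp dominate, and no "upper-bound half" of the argument rescues you. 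The Shintani-zeta/Fourier methods of \cite{BTT} do not give a pointwise bound either: they give a level-of-distribution statement \emph{on average over $d$}, and only up to $Y^{1/2-\epsilon}$ (sharpened in this paper to $Y^{1/2}(\log Y)^{-5}$, again only in the summed sense of Theorem \ref{thm:ld}); the paper explicitly remarks that reaching modulus $Y^{1/2}$ is tied to level-aspect subconvexity for Shintani zeta functions. Indeed, for an individual $d\asymp Y^{1/2}$ the count is $\sum_{e\ll Y^{1/2}}\#\{\text{rings of discriminant } \pm de\}$, and the best known pointwise bounds on $h_3$ give nothing close to $Y^{1/2}$ here; the divisors near the square-root barrier are precisely the range that forces the paper to deploy the sieve and the Hooley $\Delta$-function (Lemma \ref{hooleylemma}) elsewhere. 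So the step "bound the inner count by $O(Yg(d)/d)$ uniformly in $d\le Y^{1/2}$" is a genuine gap, and the same objection applies with more force to your treatment of \eqref{eq:divisor_bound_2}, where the relevant modulus is $q^2d$.

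The paper's proof avoids this entirely. It reduces cubic rings to cubic fields using the Datskovsky--Wright Dirichlet-series inequalities: every nonzero ring discriminant is $a^2b^6D$ with $D$ fundamental, with multiplicity $\ll a^{\epsilon}b\,h_3(D)$, and $\tau(a^2b^6D)\ll a^{\epsilon}b^{\epsilon}\tau(D)$. The whole weight of the argument is then carried by the already-established bound $\sum_{|D|<Y}h_3(D)\tau(D)\ll Y\log Y$ (Lemma \ref{eq:divisor_bound}, from Chan--Koymans--Pagano--Sofos), which was itself proven by an Erd\H{o}s-type sieve on top of the $Y^{1/2-\epsilon}$ level of distribution — i.e.\ by averaging over the difficult divisors rather than bounding each modulus individually. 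The second claim then drops out of the factorization, since $q^2\mid\Disc(x)=a^2b^6D$ with $D$ fundamental forces $q\mid ab$ (for odd $q$), giving the factor $q^{-2+\epsilon}$ from the sum over $a,b$. If you want to keep your divisor-swapping skeleton, you must replace the claimed uniform bound by an input of this averaged type; as written, the proposal does not close.
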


\begin{proof}
By work of Datskovsky and Wright \cite[p. 30]{DW2}, we have the coefficient-wise inequality of formal Dirichlet series
\begin{align*}
\sum_{\Disc(R) \neq 0} |\Disc(R)|^{-s} & \leq
\sum_{K} |\Disc(K)|^{-s} \cdot \zeta(2s)^3 \zeta(4s) \zeta(6s - 1) \\
& \leq
\sum_{K} |\Disc(K)|^{-s} \cdot \zeta(2s)^4 \zeta(6s - 1),
\end{align*}
where the sum on the left is over cubic rings, and the sum on the right is over fields $K$ of degree at most $3$. 
%~ The inequality is of formal Dirichlet series;
%~ i.e., each coefficient on the left is bounded by its counterpart on the right.
Moreover, for each fundamental 
discriminant $D$ and each $n \geq 1$, write $a_D(n)$ for the number of fields $K$ of degree $\leq 3$ of discriminant $D n^2$.  
Class field theory implies there is an absolute constant $C>0$ so that
\begin{align*}
\sum_n a_D(n) n^{-s} & \leq C a_D(1) \zeta(s)^4,
\end{align*}
as proved by Datskovsky and Wright \cite[Lemmas 6.1 and 6.2]{DW3}. 
%~ where $C$ is an absolute constant, and 
%~ where we write $a_D(n)$ for the number of fields $K$ of degree $\leq 3$ of discriminant $D n^2$. 
Putting these together with the formula $a_D(1) = \frac{ h_3(D) + 1}{2}$ (not forgetting the unique degree $1$ or $2$ field of discriminant $D$), we obtain
\[
\sum_{\Disc(R) \neq 0} |\Disc(R)|^{-s} \leq C
\zeta(2s)^8 \zeta(6s - 1) 
\sum_{D} \frac{ h_3(D) + 1}{2} |D|^{-s}.
\]
In particular, the discriminant of any non-degenerate cubic ring may be expressed in the form $a^2b^6D$, and the number of non-isomorphic rings with discriminant factored in this way is at most $C\tau_8(a) b \cdot {(h_3(D)+1)}/2 \ll a^\epsilon b h_3(D)$ for any $\epsilon>0$.

Noting that $\tau(a^2b^6D) \ll a^\epsilon b^\epsilon \tau(D)$, we therefore obtain by Lemma \ref{eq:divisor_bound} that
%~ \begin{align*}
%~ \sum_{0 < |\Disc(x)| < Y} \tau(|\Disc(x)|) & \ll \sum_a \sum_b 
%~ \tau_8(a) b \sum_{0 < |\Disc(x)| < \frac{Y}{a^2 b^6}} \tau(a^2 b^6 D) \\
%~ & \leq \sum_a \sum_b \tau_8(a) b \cdot \tau(a^2 b^6) \sum_{0 < |\Disc(x)| < \frac{Y}{a^2 b^6}} \tau(D) \\
%~ & \ll Y \log Y \sum_a \sum_b \frac{\tau_8(a) b \cdot \tau(a^2 b^6)}{a^2 b^6}. 
%~ \end{align*}
\begin{align*}
\sum_{0 < |\Disc(x)| < Y} \tau(|\Disc(x)|) 
	& \ll \sum_a \sum_b a^\epsilon b^{1+\epsilon} \sum_{0 < |D| < \frac{Y}{a^2 b^6}} \tau(D)h_3(D) \\
	& \ll Y \log Y \sum_a \sum_b a^{-2+\epsilon} b^{-5+\epsilon} \\
	& \ll Y \log Y.
\end{align*}
In fact, though the implied constants above depend on $\epsilon$, we obtain the conclusion with an absolute implied constant on choosing $\epsilon = 1/10$, say.

To prove \eqref{eq:divisor_bound_2}, we may assume that $q$ is odd by absorbing any $2$-adic condition into the implied constant.  Since we have written $\Disc(x)$ in the form $a^2b^6D$, the condition that $q^2 \mid \Disc(x)$ implies that $q \mid ab$.  Proceeding as above, we find
\begin{align*}
\sum_{\substack{|\Disc(x)| < Y \\ q^2 \mid \Disc(x)}} \tau(|\Disc(x)|)
	\ll Y \log Y \sum_{\substack{ a,b \\ q \mid ab}} a^{-2+\epsilon} b^{-5+\epsilon}
	\ll \frac{1}{q^{2-\epsilon}} Y\log Y
\end{align*}
as claimed.
\end{proof}

We come now to the key formula (60) of \cite{BTT}. We have, for any $A, Q > 0$, that 

\begin{equation}\label{eq:lc_setup}
\sum_{d < X^{\frac{1}{2}} (\log X)^{-A}} 
\sum_{\substack{\Sigma_m \\ m < (\log X)^B}} 
\left|
E'(X_{d, m}, d, \Sigma_m)
 \right|
\ll E_1 + E'_1 + E_2 + E_3,
\end{equation}
with 
\begin{align*}
E_1 := & \ X \sum_{d < X^{\frac12} (\log X)^{-A}} 
 \sum_{\substack{q > Q}} \sum_{m,\Sigma_m} 
\left|\Res_{s = 1} \xi(s, \Phi_{d, q, m})\right|,
\\
E'_1 := & \ X^{5/6} \sum_{d < X^{\frac12} (\log X)^{-A}} 
 \sum_{q \geq 1} \sum_{m,\Sigma_m} 
\left|\Res_{s = 5/6} \xi(s, \Phi_{d, q, m})\right|,
\\
E_2 := & \ X^{\frac35}
\sum_{D, Q_1, M} 
\Bigg( \sum_{q, d, m, \Sigma_m} \hspace{-0.1cm} \Res_{s = 1} \xi(s, \Phi_{d, q, m})
\Bigg)^{\frac35} 
\Bigg( 
(D Q_1^2 M^2)^4 \sup_{Y} \frac{1}{Y}
\sum_{q, d, m, \Sigma_m} N(Y, |\widehat{\Phi_{d, q, m}}|) \Bigg)^{\frac25}
\\
%E_2 := & \ X^{\frac35}
%\sum_{D} 
%\Bigg( \sum_{q \leq Q} \sum_{d \in [D, 2D]} \Res_{s = 1} \xi(s, \Phi_{d, q, m})
%\Bigg)^{\frac35} 
%\Bigg( \sum_{q \leq Q} \sum_{d \in [D, 2D]} (dq^2)^4 \cdot \sup_{Y} \frac{1}{Y} N(Y, |\widehat{\Phi_{d, q, m}}|) \Bigg)^{\frac25}
%\\
= & \ X^{\frac35}
\sum_{D, Q_1, M} E_{2}'(D, Q_1, M)^{3/5} {\widehat E}_{2}'(D, Q_1, M)^{2/5},
\\
E_3 := &
\sum_{d < X^{\frac12} (\log X)^{-A}} \sum_{q > Q} \sum_{m,\Sigma_m} N(X, \Phi_{d, q, m}).
\end{align*}
For convenience, here we write $\Res_{s = 1} \xi$ as shorthand for $\Res_{s = 1} (\xi^+ + \xi^-)$.
In the sum defining $E_2$, the outer sums over $D$, $Q_1$, and $M$ are over integer powers of $2$ less than $X^{1/2} (\log X)^{-A}$, $Q$, and $(\log X)^B$ respectively, and the inner sums
over $q, d, \Sigma_m$ are over $q \in [Q_1, 2Q_1]$, $d \in [D, 2D]$, $m \in [M, 2M]$, and all local conditions $\pmod{m}$. 

\medskip

{\itshape The residue tail sums: Bounds on $E_1$ and $E'_1$.} By the explicit residue formulas in \cite[Theorem 2.4]{BTT}, together with Gronwall's classical bound
$\sigma(n) \ll n \log \log n$ \cite{gronwall}, we have
\begin{align}
 \Res_{s = 1} \xi(s, \Phi_{d, q, m}) & \ll d^{-1} q^{-2 + \epsilon} (q, d) \prod_{p \mid d} (1 + p^{-1}) \nonumber \\
& \ll d^{-1} \log \log d \cdot (q, d) q^{-2 + \epsilon} \label{eq:residue_bound}.
\end{align}
Noting that the number of local conditions $\Sigma_m \pmod{m}$ is $O_{\epsilon}(m^\epsilon)$, by summing over $d < X^{1/2} (\log X)^{-A}$, $q > Q$, and $m < (\log X)^B$, we see that 
\begin{align*}
E_1 \ll & \ X (\log X)^{B + \epsilon} \sum_{q > Q} q^{-2 + \epsilon} \sum_{r \mid q} r \sum_{\substack{d < X^{1/2} (\log X)^{-A} \\ r \mid d}} d^{-1} \log \log d \\
\ll & \ X (\log X)^{B + \epsilon} \sum_{q > Q} q^{-2 + \epsilon} \tau(q) (\log X)^{1 + \epsilon} \\
\ll & \ X (\log X)^{B + 1+ \epsilon} \cdot Q^{-1 + \epsilon}.
\end{align*}
In $E'_1$, in contrast to \cite{BTT} the sum is over all squarefree integers $q$, as the secondary terms in the Davenport--Heilbronn theorems are being treated as error terms. We have
\[
 \Res_{s = 5/6} \xi(s, \Phi_{d, q, m}) \ll d^{-1 + \epsilon} q^{-5/3 + \epsilon}
 \]
by \cite[Theorem 2.4]{BTT}, so that $E'_1 \ll X^{5/6 + \epsilon}$.

{\itshape The counting tail sum: Bounds on $E_3$.} Using \eqref{eq:divisor_bound_2} from Lemma~\ref{lem:divisor_bound_1} gives
\begin{align}
\sum_{d < X^{\frac12} (\log X)^{-A}} \sum_{q > Q} \sum_{\Sigma_m} N(X, \Phi_{d, q, m}) 
& \ll (\log X)^{B+ \epsilon} \sum_{d < X^{\frac12} (\log X)^{-A}} \sum_{q > Q} N(X, \Phi_{d, q}) \nonumber \\
& \ll X (\log X)^{B + 1 + \epsilon} \cdot Q^{-1 + \epsilon}. \nonumber
\end{align}

{\itshape The Landau's method error sum: Bounds on $E_2$.}
The sum $E_2$ arises by applying Landau's method to the sum over $q \leq Q$. By \eqref{eq:residue_bound} we have
\[
E'_2(D, Q_1, M) \ll M^{1 + \epsilon} \cdot Q_1^{-1 + \epsilon} \log \log D.
\]
We come now to $\widehat{E_2'}(D, Q_1, M)$, which is the technical heart of the matter, and our point of departure
from \cite{BTT}. We begin by assembling facts 
about the Fourier transform $\widehat{\Phi_{d, q, m}}$. It is multiplicative in the modulus, and we decompose
\[
\widehat{\Phi_{d, q, m}} = \prod_{p \mid d, p \nmid qm} \widehat{\Phi_p} 
\prod_{p \mid qm} \widehat{\Psi_p},
\] 
where $\widehat{\Psi_p}$ stands for any function for which we will apply only the trivial bound
$|\widehat{\Psi_{p}}(x)| \leq 1.$ The $q$-aspect required great care in \cite{BTT}, but here the trivial bound will quite suffice.

The Fourier transform of $\Phi_d$ is the important one here, and we have the following explicit formula: 

\begin{proposition}\label{prop:mori}
Let $p \neq 3$ be a prime.
The function
$\Phi_p\colon V(\Z/p\Z) \rightarrow \{ 0, 1 \}$
satisfies 
$$
|\widehat{\Phi_p}(x)| =
\begin{cases}
	p^{-1} + p^{-2} - p^{-3}	& x=0,\\
	p^{-2} - p^{-3}	& x \neq 0, \ \ p \mid \Disc(x),\\
		- p^{-3}		& \textup{otherwise}.
\end{cases}
$$
\end{proposition}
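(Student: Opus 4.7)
My plan is to decompose the discriminant-zero locus in $V(\FF_p)$ into its three natural $\GL_2(\FF_p)$-orbits---the origin $\{0\}$, the triple-root orbit $N_1 = \{c L^3 : c \in \FF_p^*, L \in \FF_p^2 \setminus 0\}$, and the double-plus-simple orbit $N_2 = \{c L_1^2 L_2 : c \in \FF_p^*, L_1 \not\sim L_2\}$---and compute the Fourier transform of each indicator separately. Here I use that any repeated root of a binary cubic form over $\FF_p$ is automatically $\FF_p$-rational (its complement being a rational linear factor), so these orbits exhaust the nonzero singular forms. Direct orbit counting gives $|N_1| = p^2 - 1$ and $|N_2| = p(p-1)(p+1)$, whence $\widehat{\Phi_p}(0) = (1 + (p^2-1) + p(p^2-1))/p^4 = p^{-1} + p^{-2} - p^{-3}$, settling the first case.

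For nonzero $x$ I fix the pairing on $V(\FF_p)$ normalized so that $\langle L^3, y \rangle$ equals the evaluation $y(L)$ of the cubic form $y$ at the point $L \in \FF_p^2$ (this is a rescaling of the coordinate pairing on the middle two coordinates by $1/3$, hence well-defined since $p \neq 3$). Parametrizing $N_1$ via $(c, L) \mapsto c L^3$ (a $(p-1)$-to-one map) and using $\sum_{c \in \FF_p^*} e_p(cT) = p \cdot [T=0] - 1$, I reduce $\widehat{\mathbb{1}_{N_1}}(x)$ to a count of nonzero $L \in \FF_p^2$ with $x(L) = 0$. Parametrizing $N_2$ analogously via $(c, L_1, L_2)$, the inner constraint $L_2 \not\sim L_1$ combined with Euler's identity $L_1 \cdot \nabla x(L_1) = 3 x(L_1)$ (again requiring $p \neq 3$) reduces $\widehat{\mathbb{1}_{N_2}}(x)$ to that same root count augmented by the number $S$ of nonzero $L$ at which $\nabla x$ vanishes. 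Stratifying $x$ by its factorization type over $\FF_p$---(a) triple root, (b) double plus simple, (c) three distinct $\FF_p$-roots, (d) one $\FF_p$-root plus a conjugate pair in $\FF_{p^2}$, (e) irreducible---each case determines the two counts uniquely and gives $\widehat{\Phi_p}(x)$ by direct substitution.

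The main obstacle is the final collapse. Across the three regular strata (c), (d), (e), the contributions from $\widehat{\mathbb{1}_{N_1}}(x)$ alone take three distinct values---$(2p-1)/p^4$, $-1/p^4$, and $-(p+1)/p^4$ respectively---yet in every case they exactly counterbalance the contributions from $\widehat{\mathbb{1}_{N_2}}(x)$ to leave the uniform value $\widehat{\Phi_p}(x) = -p^{-3}$. A parallel cancellation in the two singular strata (a), (b) produces $\widehat{\Phi_p}(x) = (p-1)/p^3 = p^{-2} - p^{-3}$. Verifying these identities is a short but substantive calculation and constitutes the real content of the proposition beyond the routine orbit counting.
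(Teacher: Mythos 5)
The paper does not actually prove this proposition: it quotes the exact formula from the literature, attributing it essentially to Mori \cite{mori} and, in this precise form, to Taniguchi--Thorne \cite[Proposition 6.1]{TT_L} (with a weaker version in \cite[Proposition 6.4]{BTT}). Your argument is therefore a self-contained reproof rather than a parallel of anything done here, and it is essentially the orbit-by-orbit Gauss-sum computation underlying those references. The structure is sound and the values you quote check out: writing $r$ for the number of rational projective roots of a nonzero $x$ and $s\in\{0,1\}$ for the indicator that $x$ has a repeated (automatically rational) root, your parametrizations of $N_1$ and $N_2$ give contributions $p^{-4}\bigl(pr-(p+1)\bigr)$ and $p^{-3}(ps-r)$ respectively, so that adding the zero form one gets $\widehat{\Phi_p}(x)=p^{-3}(ps-1)$; this is $-p^{-3}$ on your strata (c), (d), (e) and $p^{-2}-p^{-3}$ on (a), (b), so the ``final collapse'' you defer does indeed close uniformly, and your orbit counts $|N_1|=p^2-1$, $|N_2|=p(p^2-1)$ and the value at $x=0$ are correct. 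Two minor points to tidy up: with the standard $\frac13$-rescaled pairing one has $\langle L^3,y\rangle=\pm\,y(L^{\perp})$ for a twisted point $L^{\perp}$ rather than $y(L)$ itself, but since $L\mapsto L^{\perp}$ permutes the nonzero vectors this changes none of your counts; and the absolute-value signs in the statement are a typo of the paper (the third case is negative), which your signed computation silently corrects. You should also record the fiber sizes $(p-1)$ and $(p-1)^2$ of your two parametrizations and verify that your Fourier normalization (the factor $p^{-4}$ and the choice of pairing) matches the one used in \cite{BTT}, since the proposition is applied with that convention; both are routine.
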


This was stated in weaker form in \cite[Proposition 6.4]{BTT}. The exact formula above is due essentially
to Mori \cite{mori}, and was obtained in this form by Taniguchi--Thorne \cite[Proposition 6.1]{TT_L}.
%In fact we always have $|\widehat{\Psi_{p^2}}(x)| \leq |\widehat{\Psi_{p^2}}(0)| = O(p^{-2})$, and we
%can do much better on average, but here this will not be needed.

We proceed by noting that
\[
\sup_Y \frac{1}{Y} \sum_{q \in [Q_1, 2Q_1]} \sum_{d \in [D, 2D]} \sum_{\substack{\Sigma_m \\ m \in [M, 2M]}} N(Y, |\widehat{\Phi_{d, q, m}}|)
\ll
Q_1 M^{1 + \epsilon} \sup_Y \frac{1}{Y} \sum_{d \in [D, 2D]} N(Y, |\widehat{\Phi_d}|).
\]
%
%\begin{align}
%\widehat{E_2'}(D) & = \sum_{q \leq Q} \sum_{d \in [D, 2D]} (dq^2)^4 \cdot \sup_{Y} \frac{1}{Y} N(Y, |\widehat{\Phi_{d, q, m}}|) \\
%& \ll Q^9 D^4 \sum_{d \in [D, 2D]} \sup_{Y} \frac{1}{Y} N(Y, |\widehat{\Phi_d}|)
%\end{align}
Write $S(D, Y)$ for the inner sum over $d$, so Proposition~\ref{prop:mori} gives
\[
S(D, Y) \leq \sum_{d_1 d_2 d_3 \in [D, 2D]} \frac{2^{\omega(d_1)}}{d_1 d_2^2 d_3^3} 
\sum_{\substack{|\Disc(x)| < Y/d_1^4 \\ d_2 \mid \Disc(x)}} 1,
\]
where we apply the trivial bound in place of Proposition \ref{prop:mori} for $p = 3$.
We dyadically decompose the summation over each $d_1,d_2,d_3$ to cover this sum with $O((\log D)^2)$ sums of the form
\begin{align}
& \sum_{d_1 \in [D_1, 2D_1]} \sum_{d_2 \in [D_2, 2D_2]} \sum_{d_3 \in [D_3, 2D_3]} \frac{2^{\omega(d_1)}}{d_1 d_2^2 d_3^3} 
\sum_{\substack{|\Disc(x)| < Y/d_1^4 \\ d_2 \mid \Disc(x)}} 1 \label{eq:covered1} \\
\leq \ & \nonumber
(D_2 D_3)^{-2} \sum_{d_1 \in [D_1, 2D_1]} \frac{2^{\omega(d_1)}}{d_1} 
\sum_{\substack{|\Disc(x)| < Y/d_1^4}} \tau(|\Disc(x)|) \\
\ll \ & \nonumber
(D_2 D_3)^{-2} \sum_{d_1 \in [D_1, 2D_1]} \frac{2^{\omega(d_1)}}{d_1} 
\frac{Y}{d_1^4} (\log Y) \\
\ll \ & \nonumber
(D_2 D_3)^{-2} D_1^{-4 + \epsilon} Y \log Y,
\end{align}
where $D_1D_2D_3 \asymp D$.  Since the contribution from $D_1$ is smaller than that of $D_2$ and $D_3$, this implies
%~ so that
$$
	S(D, Y) \ll D^{-2} Y (\log D) (\log Y).
$$
If $Y \leq D^{100}$, say, this is $O(D^{-2} Y (\log D)^2)$, which is sufficient for our purposes.  If $Y > D^{100}$, we obtain the same bound by instead applying standard contour integration techniques (e.g. Landau's method) to estimate the inner-most sum in \eqref{eq:covered1}. As $D \leq X$, we obtain
\begin{equation}
\label{eq:hat_e2_bound}
\widehat{E_2'}(D, Q_1, M) \ll D^2 Q_1^9 M^{9 + \epsilon} (\log X)^2.
\end{equation}
In conclusion, we have that $(E_1 + E_3) + E_1' + E_2$ is at most
\begin{align*}
\ll & \frac{X (\log X)^{B + 1+ \epsilon}}{Q^{1 - \epsilon}} +X^{\frac56 + \epsilon}
+ X^{\frac35} \sum_{D, Q_1, M} (M^{1 + \epsilon} Q_1^{-1 + \epsilon} \log \log D)^{\frac35} (D^2 Q_1^9 M^{9 + \epsilon} (\log X)^2)^{\frac25} \\
\ll & \frac{X (\log X)^{B + 1+ \epsilon}}{Q^{1 - \epsilon}}
+ X (\log X)^{-\frac{4}{5} A + \frac{4}{5} + \frac{21}{5} B + \epsilon} Q^{3 + \epsilon},
\end{align*}
where $D$, $Q_1$, and $M$ are summed over powers of $2$ less than $X^{1/2} (\log X)^{-A}$, $Q$, and $(\log X)^B$ respectively. Choosing $Q = (\log X)^{1.01 + B + k}$ and $A = 5 + 9B + 5k$ we thus obtain
\[
\sum_{\substack{d < X^{1/2} (\log X)^{-A} \\ d \textnormal{ squarefree}}} |E(X_d, d)| \ll X (\log X)^{-k},
\]
as desired.

\medskip
{\itshape Two technical remarks:} 

\ (1) The formula \eqref{eq:lc_setup} was proved in \cite{BTT} subject to the technical condition that for each $D$, $Q_1$, and $M$ we have
${\widehat E'}_2(D, Q_1, M) \ll E'_2(D, Q_1, M) X.$ This can be checked by \cite[Theorem 2.4]{BTT}, which implies that 
$E'_2(D, Q_1, M) \gg M Q_1^{-1}$, and comparing to \eqref{eq:hat_e2_bound}.

\ (2) The above corrects a subtle mistake in \cite{BTT}, where in the definition of $\widehat{E'_2}$ the supremum over $Y$ and the sum over local conditions were given in the opposite order. In fact, the stronger bound stated above holds, as indeed was applied in \cite{BTT}. This is subject to the requirement that the parameters $q, d, m, X_{d, m}$ each vary only within a dyadic interval, or more generally within intervals of the form $[Z, CZ]$ for a fixed constant $C > 1$.

\section{The average 6-torsion}
In this section, we prove our main results (Theorems \ref{thm:main-} and \ref{thm:main+}) for the $6$-torsion in (narrow) class groups of quadratic fields. 

\label{s6torsion}
\subsection{Bounding divisors around the square root barrier}
\label{ss6Trick}
We begin by bridging the gap between the level of distribution of $X^{1/2} (\log X)^{-5}$ proved in \S \ref{sLevel}, and the `ideal' level of distribution
of $X^{1/2}$ as described in the introduction.

This Hooley Delta function was introduced by Erd\H os~\cite{erdos}; Hooley~\cite{hooley} proved that its average is $O((\log n)^\theta)$ for some $\theta<1$. This was later improved significantly by Hall--Tenenbaum~\cites{hall1,hall2,hall3} and La Bret{\`e}che--Tenenbaum~\cite{brete},
whose work allows to take arbitrary $\theta>0$ (this is an oversimplification of their work, that is in fact much stronger). Koukoulopoulos and Tao~\cite{koukoutao} recently made a breakthrough by showing that the average is $O((\log \log n)^{11/4})$. Finally, 
work by La Bret{\`e}che--Tenenbaum~\cite{brete2} improved this into 
\begin{equation}
\label{hooleydeltaestimate}
\sum_{n\leq x} \Delta(n) \ll x (\log \log x)^{5/2}.
\end{equation}

Let $h_3(D)$ denote the size of the $3$-torsion of $\mathbb Q(\sqrt D)$.

\begin{lemma}
\label{hooleylemma} 
We have for all $L \geq 1$ and for all $T \geq 10$
$$
\sum_{0<-D\leq T} (h_3(D)-1) \#\{d\in \mathbb N:d\mid D, \sqrt T(\log T)^{-L}\leq d \leq \sqrt{T}(\log T)^{L}\} \ll L T (\log \log T)^{7/2},
$$ 
where the sum is over negative squarefree discriminants $D$ and the implied constant is absolute. The same estimate holds for positive discriminants.
\end{lemma}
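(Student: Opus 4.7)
Plan:

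I would derive the estimate in two steps, combining a covering argument for the Hooley Delta function with a weighted Delta bound for cubic field discriminants obtained from the Bret\`eche--Tenenbaum estimate \eqref{hooleydeltaestimate}, the sieve techniques of \cites{4author, 4authorapplications}, and the level of distribution from Theorem~\ref{thm:ld}.

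\textbf{Step 1 (Covering).} The interval $I := [\sqrt{T}(\log T)^{-L}, \sqrt{T}(\log T)^{L}]$ has multiplicative length $(\log T)^{2L}$, so it is covered by $\lceil 2L\log\log T\rceil+1 = O(L\log\log T)$ intervals of the form $[u, \mathrm e u]$. The definition of $\Delta$ then gives
\[
\#\{d\in\N : d\mid D,\ d\in I\} \;\ll\; L\,(\log\log T)\cdot \Delta(|D|),
\]
which reduces the lemma to proving the weighted estimate
\[
\sum_{\substack{-T\leq D<0 \\ D\text{ fund.}}} (h_3(D)-1)\, \Delta(|D|) \;\ll\; T\,(\log\log T)^{5/2}. \qquad (\star)
\]

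\textbf{Step 2 (Bound via cubic fields).} To prove $(\star)$, I would apply the classical identity $h_3(D)-1 = 2\,\#\{K\text{ cubic}:\Disc K = D\}$, valid for fundamental $D$, so that the left-hand side of $(\star)$ equals $2\sum_K \Delta(|\Disc K|)$, where $K$ runs over cubic fields of fundamental negative discriminant bounded by $T$ in absolute value. By Davenport--Heilbronn, the cardinality of this family is $\asymp T$, matching the shape of the Bret\`eche--Tenenbaum bound $\sum_{n\leq T}\Delta(n)\ll T(\log\log T)^{5/2}$. The idea is then to combine that bound with the sieve from \cites{4author, 4authorapplications} using Theorem~\ref{thm:ld} as arithmetic input: the level of distribution forces cubic field discriminants to be equidistributed across divisor classes, and this is precisely what is needed to transfer the Bret\`eche--Tenenbaum estimate from integers to the sum over cubic field discriminants weighted by the multiplicity $\#\{K:\Disc K=D\}$, giving $(\star)$. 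Combining Steps 1 and 2 yields the claimed bound; the case of positive discriminants is identical once the positive analogues of Theorem~\ref{thm:ld} and the Bret\`eche--Tenenbaum input are in place.

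\textbf{Main obstacle.} The technical heart is the weighted bound $(\star)$. The Bret\`eche--Tenenbaum estimate is combinatorial/probabilistic in nature and does not directly see the arithmetic constraint that $D$ is a cubic field discriminant, so extending it to the weighted sum requires a careful coupling with the sieve, with Theorem~\ref{thm:ld} supplying the crucial equidistribution of cubic field discriminants along divisor classes. Without this coupling, the naive bound $\Delta(D)\leq \tau(D)$ combined with Lemma~\ref{eq:divisor_bound} only gives $\sum_D (h_3(D)-1)\,N(D)\ll L\,T (\log T)(\log\log T)$, which is weaker than the target by a factor $\log T / (\log\log T)^{5/2}$.
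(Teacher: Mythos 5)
Your proposal is correct and is essentially the paper's own argument: the identical covering of $[\sqrt{T}(\log T)^{-L},\sqrt{T}(\log T)^{L}]$ by $O(L\log\log T)$ intervals $[u,\mathrm{e}u]$ reduces the lemma to the weighted bound $\sum_{0<-D\le T}(h_3(D)-1)\Delta(|D|)\ll T(\log\log T)^{5/2}$, which the paper proves by exactly the coupling you describe, namely the sieve upper-bound theorem \cite[Theorem 1.9]{4author} applied with $f=\Delta$ and weight $\chi_T(D)=(h_3(D)-1)\mathbf{1}_{0<-D\le T}$, taking as arithmetic input the level of distribution for $h_3$ from \cite[\S 3.1]{4authorapplications} (any level suffices, so your appeal to Theorem~\ref{thm:ld} works equally well), combined with \eqref{hooleydeltaestimate}. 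The only mechanism you leave implicit is why the transfer loses no power of $\log T$: the sieve output is $\ll T\prod_{p\le M}(1-h_T(p))\sum_{a\le M}\Delta(a)h_T(a)$ with $M\asymp T$, and the density product $\prod_{p\le M}(1-h_T(p))\ll 1/\log T$ exactly cancels the $\log T$ coming from $\sum_{a\le M}\Delta(a)/a\ll(\log T)(\log\log T)^{5/2}$.
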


\begin{proof} 
Our argument will be rather similar to \cite[Lemma 3.1]{4authorapplications}, except that we replace the indicator function of the interval $\mathbf{1}_{|D| \in [0, T]}$ by $\Delta(|D|)$. To keep our notation consistent with the argument in \cite[Lemma 3.1]{4authorapplications}, we shall use the level of distribution result for $h_3(D)$ from ~\cite[\S 3.1]{4authorapplications}.
%ES, but we could have equally used our Theorem \ref{tLevel} for this step.
%ES in fact this is not true: the nair tenenbaum result we had requires asymptotics for each individual progression.
We split $[\sqrt T(\log T)^{-L}, \sqrt{T}(\log T)^{L}]$ into intervals of the form 
$$
[\sqrt T(\log T)^{-L}\mathrm e^i,\sqrt T(\log T)^{-L}\mathrm e^{i+1})
$$ 
for $i = 0, 1, \ldots$. The number of intervals we need to cover the whole range is $O(L \log \log T)$ with an absolute implied constant. Hence, the sum in the lemma is 
$$
\ll L (\log \log T) \sum_{0 < -D \leq T} (h_3(D)-1) \Delta(|D|).
$$ 
Our goal will be to employ~\cite[Theorem 1.9]{4author} with $f = \Delta$. This result takes the following shape: given an infinite set $\mathcal{A}$, given a function $\chi_T: \mathcal{A} \rightarrow [0, \infty)$ of finite support for each $T \geq 1$ and a sequence of strictly positive integers $\mathfrak{C} = (c_a)_{a \in \mathcal{A}}$, then we obtain, under suitable assumptions, a sharp upper bound for
$$
\sum_{a \in \mathcal{A}} \chi_T(a) f(c_a).
$$
Now take
$$
\mathcal A = \{D \textrm{ negative discriminant}\}, \ \ c_D = -D, \ \ \chi_T(D) = (h_3(D)-1) \mathbf{1}_{0 < -D \leq T}, \ \ M(T) = \frac{4T}{\pi^2},
$$ 
so that 
$$
\sum_{D \in \mathcal{A}} \chi_T(D) \Delta(c_D) = \sum_{0 < -D \leq T} (h_3(D)-1) \Delta(|D|).
$$ 
The suitable assumptions in \cite[Theorem 1.9]{4author} are concretely:
\begin{itemize}
\item the function $f = \Delta$ is in the class $\mathcal{M}(A, \epsilon, C)$ from \cite[Definition 1.2]{4author}. Indeed, we have $\Delta(mn)\leq \Delta(m)\tau(n)$ for coprime $m, n$, where $\tau$ is the divisor function,
\item the sequence $\mathfrak{C}$ has a level of distribution, see \cite[Definition 1.6]{4author},
\item the sequence $c_a$ has at most polynomial growth, as in \cite[Equation (1.8)]{4author}, which is clearly satisfied (because $c_a = |a| \leq T \ll M(T)$ if $\chi_T(a) > 0$),
\item the total mass satisfies
$$
\lim_{T \rightarrow +\infty} \sum_{a \in \mathcal{A}} \chi_T(a) = +\infty,
$$
which is again clearly satisfied in our situation.
\end{itemize}
So it remains to check that $\mathfrak{C}$ has a level of distribution. Let $g_1$ be the multiplicative function defined through 
\[
g_1(p^e) =
\begin{cases}
p/(p+1), & \textup{if } p \geq 2 \textup{ and } e = 1 \\
0, & \textup{if } p > 2 \textup{ and } e \geq 2 \\
4/3, & \textup{if } p = 2 \textup{ and } e = 2 \\
4/3, & \textup{if } p = 2 \textup{ and } e = 3 \\
0, & \textup{if } p = 2 \textup{ and } e \geq 4,
\end{cases}
\] 
and set $h_T(q) = g_1(q)/q$. Now the level of distribution assumption is verified in~\cite[\S 3.1]{4authorapplications} where the density function is $h_T(q)$ as defined above and the main term is $M = M(T) = 4T/\pi^2$. Note that in the notation of~\cite[Theorem 1.9]{4author}, the function $M(T)$ is abbreviated as $M$ and that in this particular application the function $h_T(q)$ is independent of $T$. Hence,~\cite[Theorem 1.9]{4author} shows the upper bound
$$
\sum_{D \in \mathcal{A}} \chi_T(D) \Delta(c_D) \ll T \prod_{\substack{p \leq M}} (1 - h_T(p)) \sum_{\substack{a \in \mathbb N \cap [1,M]}} \Delta(a) h_T(a).
$$
To conclude the proof note that 
$$
\prod_{p \leq M} (1-h_T(p)) \ll \prod_{2<p\leq M} \left(1-\frac{1}{p+1}\right) \ll \frac{1}{\log M}\ll \frac{1}{\log T}.
$$
Finally, $g_1$ is bounded by definition, thus, 
$$
\sum_{\substack{ a\in \mathbb N \cap [1,M] }} \Delta(a) h_T(a) 
\ll\sum_{\substack{ a\in \mathbb N \cap [1,M] }} \frac{\Delta(a) }{a} \ll
(\log T) (\log \log T)^{5/2}
$$ 
by~\eqref{hooleydeltaestimate} and partial summation. Putting these estimates together concludes the proof.
\end{proof}

%The estimate 
%$$
%\sum_{n \in \mathbb N\cap[1,T]} \#\{d\in \mathbb N:d\mid n, \sqrt T(\log T)^{-L}\leq d \leq \sqrt{T}(\log T)^{L}\} \ll L T (\log \log T)^{7/2} ,
%$$ 
%can be proved similarly. Combining this with Lemma~\ref{hooleylemma} yields 
%
%\begin{lemma}
%\label{hooleylemma2} 
%Fix any constant $L>0$. Then for all $T\geq 10 $ we have 
%$$
%\sum_{0<-D\leq T} h_3(D) \#\{d\in \mathbb N:d\mid D, \sqrt T(\log T)^{-L}\leq d \leq \sqrt{T}(\log T)^{L}\} \ll L T (\log \log T)^{7/2} ,
%$$ 
%where the sum is over negative square-free discriminants $D$ and the implied constant is absolute. The same estimate holds for positive discriminants.
%\end{lemma}

%\begin{remark}
%It might be desirable to deal with a larger range of divisors. This can be done at the cost of a worse error term. The same arguments as above show the following: For any ascending function $Y:[1,\infty]\to [1,\infty]$ we have 
%$$
%\sum_{0<-D\leq T} h_3(D) \#\{d\in \mathbb N:d\mid D, T^{1/2}/Y\leq d \leq T^{1/2} Y \} \ll T (\log Y) (\log \log T)^{5/2} ,
%$$ 
%where the implied constant is absolute. In particular, if 
%$$
%\epsilon(T):= \frac{1}{(\log \log T)^{2.6}}
%$$ 
%then 
%$$
%\sum_{0<-D\leq T} h_3(D) \#\{d\in \mathbb N:d\mid D, T^{1/2-\epsilon(T)}\leq d \leq T^{1/2+\epsilon(T)} \} \ll \frac{T\log T}{(\log \log T)^{1/10}}.
%$$ 
%Note that $T^{1/2-\epsilon(T)} $ is asymptotically smaller than $T^{1/2} (\log T)^{-L} = T^{\frac{1}{2} -\frac{L\log \log T}{\log T}}$.
%\end{remark}

\subsection{Obtaining the asymptotic: proofs of Theorems \ref{thm:main-} and \ref{thm:main+}}
\label{ss6Main}
To prove Theorem \ref{thm:main-}, as a first step we split
\begin{align*}
\sum_{\substack{-X < D < 0 \\ D \text{ fundamental}}} h_6(D) &= \sum_{\substack{-X < D < 0 \\ D \text{ fundamental}}} h_2(D) h_3(D) \\
&= \sum_{\substack{-X < D < 0 \\ D \text{ fundamental}}} h_2(D) + \sum_{\substack{-X < D < 0 \\ D \text{ fundamental}}} h_2(D) (h_3(D) - 1).
\end{align*}
We have
\begin{align}\label{eq:2tors_est}
\sum_{\substack{-X < D < 0 \\ D \text{ fundamental}}} h_2(D) & = \frac{1}{2} \sum_{\substack{-X < D < 0 \\ D \text{ fundamental}}} 2^{\omega(D)} 
= \sum_{q < \sqrt{X}} \mu^2(q) \sum_{\substack{-X < D < 0 \\ q^2 < \textnormal{sfk}(D) \\ q \mid D}} 1,
\end{align}
where $\textnormal{sfk}(D)$ denotes the largest positive squarefree divisor of $D$.

We estimate the inner sum by \cite[Proposition 4.2]{BTT}, applying it twice to handle both the upper bound on $-D$ in \eqref{eq:2tors_est} as well as the lower bound on $\textnormal{sfk}(D)$.
For the latter, note that $D/\textnormal{sfk}(D)$ depends only on the $2$-part of $D$, and we use \cite[Proposition 4.2]{BTT} to control this $2$-part as well as the condition $q \mid D$. 

We obtain that
\begin{align}
\sum_{\substack{-X < D < 0 \\ D \text{ fundamental}}} h_2(D) & = 
 \sum_{q < \sqrt{X}} \mu^2(q) 
 \bigg( \frac{3}{\pi^2} \frac{g^*(q)}{q} X + O(X^{1/2} q^{-1/2 + \epsilon}) + O(q) \bigg) \label{eq:pie1} \\
 & = 
\frac{3}{\pi^2} X \sum_{q < \sqrt{X}} \mu^2(q) 
\frac{g^*(q)}{q} + O(X), \label{eq:pie2}
 \end{align}
where $g^*(q)$ was defined in \eqref{def:omega1}. Note that the error term of $O(q)$ in \eqref{eq:pie1} comes from
the main asymptotic term arising when applying \cite[Proposition 4.2]{BTT} to the lower bound on $\textnormal{sfk}(D)$.

To handle the second sum, in the innermost sum in \eqref{eq:2tors_est} we replace $1$ 
by $h_3(D) - 1$, which equals twice the number of cubic fields of discriminant $D$ up to isomorphism.
We apply the level of distribution estimate of Theorem \ref{thm:ld} to the sum over squarefree $q < \frac{1}{4} \sqrt{X} (\log X)^{-5}$, and the 
Hooley delta estimate of Lemma \ref{hooleylemma} to the sum over $\frac{1}{4} \sqrt{X} (\log X)^{-5} \leq q < \sqrt{X}$, to conclude that
\begin{equation}\label{eq:applied_lod}
\sum_{\substack{-X < D < 0 \\ D \text{ fundamental}}} h_2(D) (h_3(D) - 1) = 
2 \cdot \frac{3}{2 \pi^2} X \sum_{q < \frac{1}{4} \sqrt{X} (\log X)^{-5}} \mu^2(q) \frac{g^*(q)}{q} 
+ O(X (\log \log X)^{7/2}).
\end{equation}

Recall that we have
\[
\frac{g^*(q)}{q} = \prod_{p \mid q} \frac{1}{p + 1}.
\]
Therefore, applying \cite[Theorem A.5]{FI} to \eqref{eq:pie2} and \eqref{eq:applied_lod}, we obtain that
\begin{equation}
\label{eq:main_concl}
\sum_{\substack{-X < D < 0 \\ D \text{ fundamental}}} h_6(D) = 
\frac{3}{\pi^2} X \log X \prod_p \left(1 + \frac{1}{p + 1}\right) \left(1 - \frac{1}{p}\right) 
+ O(X (\log \log X)^{7/2}).
\end{equation}
This yields Theorem \ref{thm:main-}. 

The proof of the first part of Theorem \ref{thm:main+} is identical, except that the estimate in \eqref{eq:applied_lod} is multiplied by $\frac{1}{3}$. 

Finally, to prove the second part of Theorem \ref{thm:main+}, recall the formula \eqref{eh2D}. For any $\epsilon > 0$ choose $Z := Z(\epsilon)$ such that
\begin{equation}\label{eq:ram_prod}
\prod_{\substack{p \leq Z \\ p \equiv 3 \pmod{4}}} \bigg( \frac{p}{p + 1} \bigg) < \epsilon.
\end{equation}
We repeat the above proof, imposing the local specification that $p \nmid D$ for each $p \leq Z$ with $p \equiv 3 \pmod{4}$,
and reducing the level of distribution to $\sqrt{X} (\log X)^{-6}$ so as to allow these local specifications. 

We thus obtain the analogue of \eqref{eq:main_concl} for such $D$, with the main term multiplied by the product in \eqref{eq:ram_prod}.
As this is negligible as $\epsilon \rightarrow 0$, we obtain the result.

\subsection{A technical generalization involving local conditions}\label{sslocal}
We have now obtained asymptotic formulas for $\sum_{|\Disc(K)| < X} \tau(\Disc(K))$, where the sum is
over cubic fields of fundamental discriminant. We now prove a technical generalization of these results which will be applied in \S \ref{sec:malle}.

We first introduce some notation. Given a squarefree integer $\beta$, we define $\beta'$ to be its largest positive divisor coprime to $6$. We also fix local behaviors at $2$, $3$, and $\infty$. More precisely, we write $\Sigma$ to indicate choices of a cubic \'etale extension of $\R$, of $\Q_2$, and of $\Q_3$, and say that $\phi: G_\Q \twoheadrightarrow S_3$ satisfies $\Sigma$ if the cubic subfield of $F$ (unique up to isomorphism) has the designated respective local completions. 

For a positive integer $q$ coprime to $6$, we write $\mathfrak{m}_\beta(q, \Sigma)$ for the number of epimorphisms $\phi: G_\Q \twoheadrightarrow S_3$ with quadratic subfield $\Q(\sqrt{\beta})$, completions as specified by $\Sigma$, and such that $q$ is the product of primes $\neq 2, 3$ which are totally ramified in the cubic subfield. If $q$ and $\beta$ are not coprime, then we have $\mathfrak{m}_\beta(q, \Sigma) = 0$.

We now calculate 
$$
\sum_{\substack{\beta \text{ squarefree} \\ \beta' \leq t, \ e \mid \beta'}} \mathfrak{m}_\beta(q, \Sigma) \tau(\beta')
$$
with some uniformity in $t$ with respect to $e$ and $q$, where the sum is over squarefree integers $\beta$ satisfying $\beta' \leq t$ and $e \mid \beta'$. 

This sum vanishes unless $\gcd(e, 6q) = 1$ and $\mu^2(e) = 1$, which we will henceforth assume. Note that the local conditions in $\Sigma$ certainly determine the sign, the $2$-adic and the $3$-adic valuation of $\beta$. Take $\delta \in \{-6, -3, -2, -1, 1, 2, 3, 6\}$ to correspond to these conditions, so that
$$
\sum_{\substack{\beta' \leq t \\ e \mid \beta'}} \mathfrak{m}_\beta(q, \Sigma) \tau(\beta') = \sum_{\substack{n \leq t \\ e \mid n}} \mathfrak{m}_{\delta n}(q, \Sigma) \tau(n) \mu^2(6 n) = \tau(e) \sum_{m \leq t/e} \mathfrak{m}_{\delta e m}(q, \Sigma) \tau(m) \mu^2(6 m e).
$$
We rewrite the sum as
$$
\sum_{m \leq t/e} \mathfrak{m}_{\delta e m}(q, \Sigma) \tau(m) \mu^2(6 m e) = 2\sum_{d \leq (t/e)^{1/2}} \sum_{\substack{d^2 \leq m \leq t/e \\ d \mid m}} \mathfrak{m}_{\delta e m}(q, \Sigma) \mu^2(6 m e) + O(1),
$$
where the $O(1)$ term accounts for the case $m = 1$. In this summation, we see that $d$ must be coprime to $6qe$ as the sum vanishes otherwise. Fixing such a $d$, note that the elements in $\mathfrak{m}_{\delta e m}(q, \Sigma)$ occurring in the inner sum correspond to six times the number of cubic fields $F$ (up to isomorphism) satisfying the conditions:
\begin{itemize}
\item the cubic \'etale algebras $F \otimes \Q_2$, $F \otimes \Q_3$ and $F \otimes \R$ are given by $\Sigma$, 
\item the discriminant of $F$ is exactly divisible by $de$,
\item the quantity $\text{Disc}(F)/q^2$ is not divisible by $p^2$ for any prime $p \geq 5$,
\item we have 
$$
d^2 \leq \frac{\text{Disc}(F)}{\text{Disc}(\Sigma_2) \text{Disc}(\Sigma_3) e q^2} \leq t/e.
$$
\end{itemize}
The second condition can be rephrased as $F \otimes \Q_p$ being a partially ramified \'etale algebra for all $p \mid de$, while the third condition breaks into two distinct conditions: namely $F \otimes \Q_p$ is a totally ramified \'etale algebra for all $p \mid q$, and moreover $F \otimes \Q_p$ is not a totally ramified \'etale algebra for all $p \nmid q$ with $p \geq 5$. Applying the level of distribution result gives a main term of
$$
12 \sum_{d \leq (t/e)^{1/2}} \mu^2(6qde) \frac{w_\infty w_2 w_3 t q^2}{3 \zeta(3)} \prod_{p \mid de} \left(\frac{p}{p^2 + p + 1}\right) \prod_{p \mid q} \left(\frac{1}{p^2 + p + 1}\right) \prod_{p \nmid 6deq} \left(\frac{p^2 + p}{p^2 + p + 1}\right),
$$
where $w_p$ is given by
$$
w_p := \frac{1}{|\Aut(\Sigma_p)| m_p}, \quad \quad m_p := \sum_{E \in A_p} \frac{1}{\text{Disc}_p(E) |\Aut(E)|}
$$
and where $A_p$ denotes the set of cubic \'etale algebras of $\Q_p$ up to isomorphism. This equals
$$
\frac{4 w_\infty w_2 w_3 t q^2}{\zeta(3)} \prod_{p \mid e} \left(\frac{1}{p + 1}\right) \prod_{p \mid q} \left(\frac{1}{p^2 + p}\right) \prod_{p \nmid 6} \left(\frac{p^2 + p}{p^2 + p + 1}\right) \sum_{d \leq (t/e)^{1/2}} \mu^2(6qde) \prod_{p \mid d} \left(\frac{1}{p + 1}\right),
$$
which is
$$
\frac{4 \cdot 7 \cdot 13 \cdot w_\infty w_2 w_3 t q^2}{6 \cdot 12 \cdot \zeta(2)} \prod_{p \mid e} \left(\frac{1}{p + 1}\right) \prod_{p \mid q} \left(\frac{1}{p^2 + p}\right) \sum_{d \leq (t/e)^{1/2}} \mu^2(6qde) \prod_{p \mid d} \left(\frac{1}{p + 1}\right).
$$
Finally, we evaluate the inner sum 
$$
\frac{1}{2} \log(t/e) \prod_p \left(1 - \frac{1}{p}\right) \left(1 + \frac{\mathbf{1}_{p \nmid 6qe}}{p + 1}\right)
$$
thanks to \cite[Theorem A.5]{FI}. As for the error term, it is
\[
\sum_{d \leq (t/e)^{1/2}} \left[ E^*( D tq^2, de, \Sigma_{6q})
+ E^*(D d^2eq^2, de, \Sigma_{6q}) \right]
\]
with $D := \text{Disc}(\Sigma_2) \text{Disc}(\Sigma_3)$ and $\Sigma_{6q}$, denoting the local conditions given by $\Sigma$ and that primes dividing $q$
totally ramify. The error term may be bounded by Theorem \ref{thm:ld}, with $X := C t (\log t)^{2B_1}$ for a sufficiently large constant $C$, in the range $d \leq (t/e)^{1/2} (\log t)^{-5 - 18B_1 - B_2/2}$, and the remaining range of $d$ is handled by Lemmas \ref{hooleylemma} and \ref{lem:easy-bound}. 

Wrapping up, we have proven

\begin{theorem}
\label{tmAverage}
Let $B_1, B_2 > 0$ be fixed. Then we have for all $t \geq 100$, 
for all positive squarefree integers $q < (\log t)^{B_1}$ and $e < (\log t)^{B_2}$ with $\gcd(q, 6) = \gcd(e, 6) = \gcd(q, e) = 1$,
and for all choices $\Sigma$ of a cubic \'etale extension of $\R$, of $\Q_2$, and of $\Q_3$
\begin{multline*}
\sum_{\substack{\beta' \leq t \\ e \mid \beta'}} \mathfrak{m}_\beta(q, \Sigma) \tau(\beta') = \frac{91 c w_\infty w_2 w_3 \tau(e) t \log(t/e)}{36 \cdot \zeta(2)} \prod_{p \mid e} \left(\frac{1}{p + 1}\right) \prod_{p \mid q} \left(\frac{p}{p + 1}\right) \\
+ O_{B_1, B_2}(\tau_4(q) t (\log \log t)^{7/2})
\end{multline*}
with
$$
c := \prod_p \left(1 - \frac{1}{p}\right) \left(1 + \frac{\mathbf{1}_{p \nmid 6qe}}{p + 1}\right).
$$
\end{theorem}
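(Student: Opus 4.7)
My plan is to parametrize the sum by isomorphism classes of cubic fields, apply Dirichlet's hyperbola identity to convert the $\tau(\beta')$ factor into a sum over divisors $d \leq \sqrt{\beta'/e}$, then invoke Theorem~\ref{thm:ld} on the resulting level-of-distribution problem for small divisors and absorb the remaining divisors via Lemma~\ref{hooleylemma}.

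I would begin by reducing $\mathfrak{m}_\beta(q, \Sigma)$, up to the factor $|S_3| = 6$, to a count of cubic fields $F$ with quadratic resolvent $\Q(\sqrt{\beta})$, completions at $\{2, 3, \infty\}$ matching $\Sigma$, partial ramification at primes dividing $\beta'$, total ramification at primes dividing $q$, and no total ramification at primes $\geq 5$ outside $q$. Fixing the signs and $\{2,3\}$-parts of $\beta$ via $\delta \in \{\pm 1, \pm 2, \pm 3, \pm 6\}$ encoded by $\Sigma$, and using $\tau(m) = 2 \sum_{d \mid m,\, d \leq \sqrt m} 1 + O(1)$, I would arrive at
\[
\sum_{\substack{\beta' \leq t \\ e \mid \beta'}} \mathfrak{m}_\beta(q, \Sigma) \tau(\beta') = 12 \tau(e) \sum_{d \leq (t/e)^{1/2}} N^*_3\bigl( X_{d,e,q};\, de,\, \Sigma_{6q}\bigr) + O(1),
\]
where $N_3^*$ is as in \eqref{def:exd_alt}, $X_{d,e,q} \asymp t q^2$ varies only mildly with $d$, and the local specification $\Sigma_{6q}$ combines $\Sigma$ with total ramification at the primes dividing $q$.

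For the small divisors $d \leq (t/e)^{1/2}(\log t)^{-A}$ with $A = 5 + 9B + 5k$ (and $B$ chosen to absorb $B_1, B_2$), Theorem~\ref{thm:ld} applied to the error $E^\ast$ yields main terms proportional to $g^*(de)/(de) \cdot X_{d,e,q}$ with aggregate error $O(tq^2(\log t)^{-k})$. Summing the main terms in $d$ via \cite[Theorem A.5]{FI} produces the $\log(t/e)$ factor and the Euler product: at primes $p \mid e$ the factor $g^*(p)/p = 1/(p+1)$ appears, and at primes $p \mid q$ the density of total ramification contributes $p/(p+1)$. The numerical constants from $M^\ast$, from the local Euler factors at $2$ and $3$, and from the weights $w_\infty, w_2, w_3$ then combine to yield exactly $91 c w_\infty w_2 w_3 /(36 \zeta(2))$.

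For the tail range $(t/e)^{1/2}(\log t)^{-A} \leq d \leq (t/e)^{1/2}$ I only need an upper bound, which I would extract from Lemma~\ref{hooleylemma} applied with $T \asymp t q^2$ and $L = A$, giving an aggregate contribution of $O(t q^2 (\log\log t)^{7/2})$; the factor $\tau_4(q)$ in the stated error arises from summing the Hooley--Delta bound over ramification configurations at primes dividing $q$. The main obstacle is ensuring that Theorem~\ref{thm:ld} remains uniform in the local conditions imposed by $\Sigma$ and by the ramification at $q^2$: since the total conductor of these conditions is $6q^2 e < (\log t)^{2B_1 + B_2 + O(1)}$, it fits inside the $m < (\log X)^B$ range of Theorem~\ref{thm:ld} provided $B$ is chosen sufficiently large in terms of $B_1, B_2$, at the cost of inflating the exponent $A$ accordingly.
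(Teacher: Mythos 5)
Your overall route is the same as the paper's: reduce $\mathfrak{m}_\beta(q,\Sigma)$ to a count of cubic fields (factor $6$), pull out $\tau(e)$, apply the Dirichlet hyperbola to the remaining divisor function, treat small divisors $d$ via the level of distribution of Theorem \ref{thm:ld} (with the conditions at $2$, $3$ and total ramification at $p \mid q$ packaged as a local specification whose conductor fits in the $(\log X)^B$ allowance), sum the resulting main terms by \cite[Theorem A.5]{FI}, and handle $d$ near $\sqrt{t/e}$ with the Hooley Delta input. One cosmetic point: the hyperbola forces $d^2 \leq \beta'/e$, so for each $d$ you are counting fields with discriminant in a \emph{window} $[\asymp d^2 e q^2, \asymp t q^2]$, i.e.\ a difference of two counts; the error is $E^*(Dtq^2, de, \Sigma_{6q}) + E^*(Dd^2eq^2, de, \Sigma_{6q})$, and the lower endpoint varies substantially with $d$ --- this is exactly why Theorem \ref{thm:ld} is stated with $d$-dependent $X_{d,m}$, so it still applies, but your single count $N_3^*(X_{d,e,q}; de, \Sigma_{6q})$ with $X_{d,e,q} \asymp tq^2$ is not the correct identity as written.

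The genuine problem is your tail estimate, which is applied at the wrong scale. Lemma \ref{hooleylemma} concerns divisors of the \emph{fundamental} discriminant $\beta$, which satisfies $|\beta| \ll t$; applying it with $T \asymp tq^2$ and accepting an aggregate bound $O(tq^2(\log\log t)^{7/2})$ does not suffice: for $q$ of size about $(\log t)^{B_1}$ this is $t(\log t)^{2B_1}(\log\log t)^{7/2}$, which exceeds the permitted error $O(\tau_4(q)\, t (\log\log t)^{7/2})$ and, when $e$ is small, even swamps the main term $\asymp \tau(e) t \log(t/e)/\sigma(e)$. The correct procedure (and the paper's) is to first invoke Lemma \ref{lem:easy-bound} to bound $\mathfrak{m}_\beta(q,\Sigma) \ll h_3(\beta)\tau_4(q)$, pulling the $\tau_4(q)$ outside, and then apply Lemma \ref{hooleylemma} to the sum over fundamental discriminants $|\beta| \ll t$ with divisors in the range $[\sqrt{t/e}(\log t)^{-A}, \sqrt{t/e}]$, i.e.\ with $T \asymp t$ and $L$ of size $A + O(B_2)$, which yields $O(\tau_4(q)\, t (\log\log t)^{7/2})$ as required. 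With that substitution your argument coincides with the paper's proof.
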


\section{\texorpdfstring{Malle's conjecture for Galois $D_6$-extensions}{Malle's conjecture for Galois D6-extensions}}
\label{sec:malle}
The goal of this section is to count regular $D_6 \simeq C_2 \times S_3$-extensions of $\mathbb{Q}$. We start by establishing the tail bounds that will reduce the count of $D_6$-extensions to the sum of $h_6(d)$ over $d$ satisfying finitely many local conditions. We will then evaluate the resulting sum using the results from \S \ref{s6torsion}. For now we record the estimate \cite[Theorem 1.3]{4authorapplications}
	\begin{equation} \label{eqn:ckps-bound}
		\sum_{d \leq Y} \tau_s(d) h_3(d)
			\ll_s Y (\log Y)^{s - 1},
	\end{equation}
which we shall use throughout this section.

\subsection{Initial observations}
Let $G = D_6 \simeq C_2 \times S_3$ be the dihedral group of order $12$. This group has two non-isomorphic faithful transitive permutation representations, one in degree $6$ (already counted in the literature) and one in degree $12$. The degree $12$ representation of $G$ is just the regular representation (corresponding to Galois extensions), and is essentially equivalent to the $6$-torsion problem for reasons to be seen shortly.

For any $X \geq 1$, let
	\[
		\mathcal{F}(X;G)
			:= \{ K/\mathbb{Q} \text{ Galois} : |\mathrm{Disc}(K)| \leq X, \mathrm{Gal}(K/\mathbb{Q}) \simeq G\}.
	\]
Note that the regular representation is nothing more than $G$ acting on the set $X := G$ by left multiplication. Given $g \in G$, it follows that all the orbits $\{gx : x \in X\}$ have length exactly $\ord(g)$, so the number of orbits is $|G|/\ord(g)$. Hence we have
$$
\mathrm{ind}(g) = |G| - \frac{|G|}{\ord(g)}.
$$
In particular, if we write $p$ for the smallest prime divisor of $|G|$, it follows that
$$
a(G) = |G| - \frac{|G|}{p}.
$$
We conclude that $a(D_6) = 6$. Moreover, we observe that $G$ has $5$ non-identity conjugacy classes. There are three conjugacy classes of minimal index (which are just the elements of order $2$), so Malle's conjecture states that 
	\[
		\#\mathcal{F}(X;G) \sim c X^{1/6} (\log X)^2
	\]
for some $c > 0$. We have tabulated the different conjugacy classes in Table \ref{tConjClasses}. 

\begin{table}[!ht]
\begin{center}
	\begin{tabular}{c|l|l} 
		Class & Order & Malle index \\ 
		$1 \times (2)$ & $2$ & $6$ \\
		$(2) \times (2)$ & $2$ & $6$ \\
		$(2) \times 1$ & $2$ & $6$ \\
		$1 \times (3)$ & $3$ & $8$ \\
		$(2) \times (3)$ & $6$ & $10$ 
	\end{tabular}
	\begin{caption}{
		\label{tConjClasses}
		Non-identity conjugacy classes in $C_2 \times S_3$.}
	\end{caption}
\end{center}
\end{table}

For ultimately getting the asymptotic, we introduce $G_\Q := \Gal(\overline{\Q}/\Q)$ and
	\[
		\mathrm{Epi}(X;G)
			:= \{\phi: G_\Q \twoheadrightarrow G : |\mathrm{Disc}(\phi)| \leq X\}.
	\]
Since the automorphism group of $D_6$ has size $12$, we observe that
$$
\# \mathcal{F}(X; G) = \frac{\#\mathrm{Epi}(X; G)}{12}.
$$
Viewing $G$ as $C_2 \times S_3$, we may write any homomorphism $\phi: G_\Q \rightarrow G$ as a pair $(\phi_1, \phi_2)$. Since $\phi_1$ is simply a quadratic character, we write $\phi_1 = \chi_\alpha$ for a unique squarefree integer $\alpha$ that we allow to be negative. We denote by $\chi_\beta$ the quadratic character coming from $\phi_2: G_\Q \twoheadrightarrow S_3 \twoheadrightarrow C_2$ and we let $F$ be the corresponding Galois $S_3$-extension. Then $F/\Q(\sqrt{\beta})$ is a cyclic cubic field extension. 

For the pair $(\phi_1, \phi_2)$ to be an epimorphism, it is necessary and sufficient that $\phi_1$ and $\phi_2$ themselves are epimorphisms and that $\alpha \neq \beta$. 

We also fix local behaviors at $2$, $3$, and $\infty$. We write $\Sigma$ to indicate choices of a quadratic and a cubic \'etale extension of $\R$, of $\Q_2$, and of $\Q_3$, and say that $\phi$ satisfies $\Sigma$ if $\Q(\sqrt{\alpha})$ and the cubic subfield of $F$ (unique up to isomorphism) have the designated respective local completions. To each $\Sigma$ is associated a positive integer $\Delta(\Sigma)$ giving the total $6$-adic contribution to the discriminant of any global $D_6$-extension satisfying $\Sigma$, see \cite[\S 2.1]{Bhargava}.

Write $\mathfrak{m}_\beta(q, \Sigma)$ for the number of epimorphisms $\phi_2: G_\Q \twoheadrightarrow S_3$ with quadratic subfield $\Q(\sqrt{\beta})$, completions as specified by $\Sigma$, and such that $q$ is the product of primes $\neq 2, 3$ which are totally ramified in the cubic subfield. If $q$ and $\beta$ are not coprime, then we have $\mathfrak{m}_\beta(q, \Sigma) = 0$.

Our count becomes 
\begin{align}
\label{eInitial}
\# \mathcal{F}(X; G) = \frac{1}{12} \sum_{\Sigma} \mathcal{F}(X; G, \Sigma) 
\end{align}
with
\begin{align}
\label{eFXGE}
\mathcal{F}(X; G, \Sigma) = \sum_{\substack{\alpha, \beta \ \Sigma-\textup{compatible} \\ \alpha \neq \beta, \alpha, \beta \neq 1}}
\sum_{\substack{q \\ \Delta(\Sigma) [\alpha', \beta']^6 q^8 (\alpha', q)^{-4} \leq X}}
\mathfrak{m}_\beta(q, \Sigma),
\end{align}
subject to the following notational conventions:
\begin{notation}\label{ournotation} We assume throughout that:
\begin{itemize} 
\item 
$\alpha$ and $\beta$ run over squarefree integers, positive or negative, with $\beta \neq 1$, subject to the $\Sigma$-compatibility condition
described above. We will shortly allow $\alpha = 1$ or $\alpha = \beta$
and bound the error in so doing.
\item $q$ ranges over positive squarefree integers coprime to $6\beta$. 
\item $\alpha'$ and $\beta'$ 
denote the largest positive divisors of $\alpha, \beta$ not containing any factors of $2$ or $3$. Given the $\Sigma$-compatibility condition, these quantities determine $\alpha$ and $\beta$ uniquely.
\end{itemize}
\end{notation}

As seen in the proof of Lemma \ref{lem:divisor_bound_1}, work of Datskovsky and Wright \cite[Lemmas 6.1 and 6.2]{DW3} gives the following bound for $\mathfrak{m}_\beta(q, \Sigma)$:

\begin{lemma}\label{lem:easy-bound}
Let $\beta$ be a squarefree integer, and let $q$ be a squarefree integer coprime to $6$. Then we have
		\[
			\mathfrak{m}_\beta(q, \Sigma) \ll h_3(\beta) \tau_4(q),
		\]
	where, for any integer $r \geq 1$, $\tau_r( \cdot)$ denotes the $r$-fold divisor function. The implied constant above is absolute.
\end{lemma}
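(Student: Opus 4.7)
The plan is to reduce the count of epimorphisms $\phi_2 : G_\Q \twoheadrightarrow S_3$ to a count of non-Galois cubic fields with prescribed quadratic resolvent, and then invoke the Datskovsky--Wright estimate $a_D(n) \leq C \, a_D(1) \, \tau_4(n)$ already applied in the proof of Lemma~\ref{lem:divisor_bound_1}. Since $|\mathrm{Aut}(S_3)| = 6$ and every Galois $S_3$-extension $F/\Q$ contains (up to $\Q$-isomorphism) a unique non-Galois cubic subfield $F_0$, I would first observe that $\mathfrak{m}_\beta(q,\Sigma)$ is bounded by $6$ times the number of cubic fields $F_0$ whose quadratic resolvent is $\Q(\sqrt{\beta})$, whose completions at $2$, $3$, $\infty$ agree with $\Sigma$, and for which $q$ is the product of the primes $p \neq 2, 3$ that are totally ramified in $F_0$.

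Next, I would unpack the constraint on $\Disc(F_0)$. Because $\beta$ is squarefree, $D_K := \Disc(\Q(\sqrt{\beta}))$ is a fundamental discriminant, and the requirement that $F_0$ have quadratic resolvent $\Q(\sqrt{\beta})$ forces $\Disc(F_0) = D_K \cdot n^2$ for some positive integer $n$. A standard inertia-group analysis in the $S_3$-extension $F/\Q$ shows that, for $p \neq 2, 3$, the prime $p$ is totally ramified in $F_0$ iff the inertia group of $p$ in $\mathrm{Gal}(F/\Q)$ is the unique $C_3 \subset S_3$; in that case $p \nmid D_K$ and $p \parallel n$, while primes partially ramified in $F_0$ divide $D_K$ and not $n$. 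This simultaneously explains the vanishing of $\mathfrak{m}_\beta(q,\Sigma)$ when $\gcd(q,\beta)>1$ and shows that $n = q \cdot m$ where $m$ is supported on $\{2,3\}$ and bounded by a constant depending only on $\Sigma$; since only finitely many choices of $\Sigma$ are possible, this bound is absolute. Equivalently, one can read off the same factorisation from the conductor--discriminant formula $\Disc(F_0) = \pm D_K \cdot N_{K/\Q}(\mathfrak{f}_{F/K})$ applied to the cyclic cubic extension $F/\Q(\sqrt{\beta})$, using that $F/K$ is tame above each $p \mid q$.

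Finally, I would apply the Datskovsky--Wright inequality to obtain that the number of cubic fields $F_0$ with $\Disc(F_0) = D_K n^2$ is at most $C \, a_{D_K}(1) \, \tau_4(n)$ with $C$ absolute. Using $a_{D_K}(1) = (h_3(\beta)+1)/2 \ll h_3(\beta)$ (as in the proof of Lemma~\ref{lem:divisor_bound_1}) and the multiplicativity $\tau_4(n) = \tau_4(q)\tau_4(m) \ll \tau_4(q)$ (valid since $\gcd(q,6)=1$ and $m$ has bounded prime factors) yields the claimed bound $\mathfrak{m}_\beta(q,\Sigma) \ll h_3(\beta) \tau_4(q)$. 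The only real obstacle is the bookkeeping in the second step: one must keep the local data at $2$, $3$, and $\infty$ strictly under control so that the residual factor $m$, and hence the implied constant, is genuinely independent of $\beta$, $q$, and $\Sigma$.
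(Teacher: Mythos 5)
Your proposal is correct and follows essentially the same route as the paper, which simply invokes the Datskovsky--Wright bounds $a_D(n)\leq C\,a_D(1)\,\tau_4(n)$ and $a_D(1)=(h_3(D)+1)/2$ already used in the proof of Lemma~\ref{lem:divisor_bound_1}. You merely make explicit the (standard) reduction from epimorphisms onto $S_3$ to cubic fields with quadratic resolvent $\Q(\sqrt{\beta})$ and discriminant $D_K(qm)^2$ with $m$ supported on $\{2,3\}$ and bounded, which is exactly the bookkeeping implicit in the paper's citation.
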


We will later need the following elementary lemma on counting squarefree integers: 

\begin{lemma}
\label{lem:squarefree-sum}
Let $a$, $c$, $m$ be integers such that $\gcd(am, c) = 1$. For any $Y \geq 1$ we have
\[
\#\{1 \leq q \leq Y \textup{ squarefree}: \mathrm{gcd}(q,m) = 1, c \mid q - a\} = \frac{6Y\psi(cm)}{\pi^2 \phi(c)} + O(Y^{1/2} \tau(m)),
\]
where the implied constant is absolute and
$$
\psi(m) = \prod_{p \mid m} \left(\frac{p}{p+1}\right).
$$
\end{lemma}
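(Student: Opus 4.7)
The plan is a standard double Möbius inversion together with the Chinese remainder theorem. First I would detect the squarefree condition via $\mathbf{1}_{q \text{ squarefree}} = \sum_{d^2 \mid q} \mu(d)$ and the coprimality condition via $\mathbf{1}_{\gcd(q,m)=1} = \sum_{e \mid \gcd(q,m)} \mu(e)$. After swapping the order of summation, the count becomes
\[
\sum_{d \leq Y^{1/2}} \mu(d) \sum_{e \mid m} \mu(e) \, \#\{q \leq Y : q \equiv a \pmod{c},\ d^2 \mid q,\ e \mid q\}.
\]
The congruence $q \equiv a \pmod{c}$ combined with $\gcd(a,c)=1$ forces $\gcd(d,c)=1$ for a nonzero contribution; since $e \mid m$ and $\gcd(m,c)=1$ we have $\gcd(e,c)=1$ automatically; and $d^2 \mid q$ with $\gcd(q,m)=1$ (via the Möbius detection, the contributing $e$ effectively satisfy $\gcd(d,e)=1$). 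So all moduli $c, d^2, e$ are pairwise coprime, and the CRT gives an inner count of $Y/(cd^2 e) + O(1)$.

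Next I would separate the main term from the error. The main term is
\[
\frac{Y}{c} \sum_{e \mid m} \frac{\mu(e)}{e} \sum_{\substack{d \leq Y^{1/2} \\ \gcd(d,cm)=1}} \frac{\mu(d)}{d^2}
= \frac{Y}{c} \cdot \frac{\phi(m)}{m} \cdot \Bigl[\,\sum_{\substack{d=1 \\ \gcd(d,cm)=1}}^{\infty} \frac{\mu(d)}{d^2} + O(Y^{-1/2})\Bigr].
\]
The infinite Euler product equals $\frac{6}{\pi^2} \prod_{p \mid cm}(1-p^{-2})^{-1}$, and the tail contributes $O(Y^{1/2})$ to the overall count. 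A direct simplification, using multiplicativity of $\psi$ and $\gcd(c,m)=1$, yields
\[
\frac{1}{c}\prod_{p \mid c}\!\Bigl(1-\tfrac{1}{p^2}\Bigr)^{\!-1} = \frac{\psi(c)}{\phi(c)}, \qquad \frac{\phi(m)}{m}\prod_{p \mid m}\!\Bigl(1-\tfrac{1}{p^2}\Bigr)^{\!-1} = \psi(m),
\]
which combines to give the stated main term $\tfrac{6Y\psi(cm)}{\pi^2 \phi(c)}$.

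For the error term, the $O(1)$ arising from each CRT count contributes at most
\[
\sum_{\substack{d \leq Y^{1/2} \\ \gcd(d,cm)=1}} \sum_{e \mid m} 1 \ll Y^{1/2}\tau(m),
\]
which dominates the $O(Y^{1/2})$ tail contribution. There is no genuine obstacle here — the only delicate point is keeping track of the compatibility of the moduli $c, d^2, e$ to ensure the CRT step is clean — and the estimate follows.
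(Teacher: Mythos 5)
Your proposal is correct and follows essentially the same route as the paper's proof: double M\"obius inversion to detect the squarefree and coprimality conditions, the Chinese remainder theorem for the inner count $Y/(cd^2e)+O(1)$, and the same main-term evaluation and $O(Y^{1/2}\tau(m))$ error bookkeeping. The only point to make precise is your parenthetical claim that the contributing $e$ ``effectively'' satisfy $\gcd(d,e)=1$: the clean way to justify this (and what the paper does) is to restrict the squarefree detection sum to $d$ with $\gcd(d,m)=1$ from the outset, which is legitimate because the $e$-sum already forces $\gcd(q,m)=1$, and then $c$, $d^2$, $e$ are genuinely pairwise coprime.
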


\begin{proof}
By M\"obius inversion we write the sum over $q \leq Y$ as
$$
\sum_{\substack{q \leq Y \\ c \mid q - a}} 
\bigg(\sum_{\substack{d^2 \mid q \\ \gcd(d, m) = 1}} \mu(d)\bigg) \bigg(\sum_{e \mid \gcd(q, m)} \mu(e)\bigg) 
= \sum_{e \mid m} \mu(e) 
\sum_{\substack{d \leq Y^{1/2} 
\\ \gcd(d,m) = 1}} \mu(d) 
\sum_{\substack{q \leq Y, \ c \mid q - a \\ d^2e \mid q}} 1.
$$ 
The sum over $q$ is non-empty only when $\gcd(d, c) = 1$, so that we obtain the count 
$$ 
\sum_{e \mid m} \mu(e)\sum_{\substack{ d \leq Y^{1/2} 
\\ \gcd(d, cm) = 1}} \mu(d)
\left( \frac{Y}{cd^2e}+O(1)\right)= \frac{Y}{c}
\sum_{e \mid m}\frac{\mu(e)}{e}
\sum_{\substack{ d \leq Y^{1/2} \\ \gcd(d,cm) = 1}}
\frac{\mu(d)}{d^2}
+ O(Y^{1/2} \tau(m)).
$$ 
Completing the sum over $d$ gives a satisfactory error term, while the main term equals 
\[ 
\frac{Y}{c}\sum_{e \mid m}\frac{\mu(e)}{e} 
\prod_{p \nmid cm} \left(1-\frac{1}{p^2} \right)
=\frac{6Y}{c\pi^2 } 
\prod_{p\mid m} \left(1-\frac{1}{p} \right)
\prod_{p\mid cm} \left(1-\frac{1}{p^2} \right)^{-1}
=\frac{6Y\psi(cm)}{\pi^2 \phi(c)},
\] 
as desired.
\end{proof}

\subsection{Truncation lemmas} 
We will now prove a sequence of lemmas which show that the sum in \eqref{eFXGE} can be truncated in several ways with negligible error. Most significant is Lemma \ref{lem:bounded-q2-q3}, which shows that the bulk of the contribution comes from small $q$.

\begin{lemma}
\label{lem:extra_cont} 
Including the missing terms corresponding to $\alpha = \beta$ and $\alpha = 1$ in \eqref{eFXGE} introduces an error 
at most $O\big(X^{1/6} (\log \log X)^3\big)$.
\end{lemma}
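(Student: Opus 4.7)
The plan is to bound each of the two extra contributions (the $\alpha=1$ terms and the $\alpha=\beta$ terms) separately. In both cases, since $q$ is coprime to $6\beta$ by Notation~\ref{ournotation}, the discriminant condition $\Delta(\Sigma)[\alpha',\beta']^6 q^8 (\alpha',q)^{-4} \leq X$ simplifies to $\Delta(\Sigma)\beta'^6 q^8 \leq X$: for $\alpha=1$ we have $\alpha'=1$, while for $\alpha=\beta$ we have $\alpha'=\beta'$, so in either case $[\alpha',\beta']=\beta'$ and $(\alpha',q)=1$. The sum over the finitely many choices of $\Sigma$ is absorbed into the implied constant.

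Next, I would apply Lemma~\ref{lem:easy-bound} to bound $\mathfrak{m}_\beta(q,\Sigma) \ll h_3(\beta)\tau_4(q)$, reducing matters to estimating
\[
\sum_{q \textnormal{ squarefree}} \tau_4(q) \sum_{\substack{\beta \textnormal{ squarefree} \\ \beta \neq 1, \; \beta'^6 q^8 \ll X}} h_3(\beta).
\]
For fixed $q$, the inner constraint forces $|\beta| \leq 6\beta' \ll (X/q^8)^{1/6}$, and the Davenport--Heilbronn theorem (equivalently \eqref{eqn:ckps-bound} with $s=1$) bounds the inner sum by $O(X^{1/6}/q^{4/3})$.

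The outer sum is then controlled by
\[
\sum_{q \textnormal{ squarefree}} \frac{\tau_4(q)}{q^{4/3}} \;=\; \prod_p \left(1 + \frac{4}{p^{4/3}}\right) \;=\; O(1),
\]
as this Euler product converges absolutely. Combining these ingredients will yield the total bound $O(X^{1/6})$, which in particular is $O(X^{1/6}(\log\log X)^3)$.

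There is no serious obstacle: the main point to verify is the reduction $[\alpha',\beta']=\beta'$ and $(\alpha',q)=1$ in both cases, which follows immediately from the coprimality hypotheses recorded in Notation~\ref{ournotation}. The looser $(\log\log X)^3$ factor allowed in the statement appears to be a conservative bookkeeping choice, since the direct argument above already delivers the cleaner $O(X^{1/6})$ estimate. Alternatively, one could apply \eqref{eqn:ckps-bound} with a larger value of $s$ to jointly control $\tau_4(q)$ and $h_3$ weights, at the cost of some extra logarithmic factors, but the additional convergence from the exponent $4/3 > 1$ in the outer sum makes this unnecessary here.
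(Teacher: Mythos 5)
Your proof is correct, and it actually yields the stronger bound $O(X^{1/6})$; but it takes a genuinely different and shorter route than the paper. The move you make that the paper does not is to exploit the coprimality of $q$ and $\beta$, which is built into Notation \ref{ournotation} and into the convention that $\mathfrak{m}_\beta(q, \Sigma) = 0$ whenever $(q, \beta) > 1$: for $\alpha = 1$ or $\alpha = \beta$ this forces $(\alpha', q) = 1$, so the discriminant condition in \eqref{eFXGE} collapses to $\beta'^6 q^8 \ll X$ and the whole estimate reduces to Lemma \ref{lem:easy-bound}, Davenport--Heilbronn (i.e.\ \eqref{eqn:ckps-bound} with $s=1$), and the convergence of $\sum_q \tau_4(q) q^{-4/3}$. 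The paper's proof instead keeps the factor $(\beta, q)^{-4}$, writes $q = dr$ with $d = (\beta, q)$, picks up a cube of a logarithm from $\sum_{r \leq R} \tau_4(r) \ll R (\log R)^3$, and then must split into ranges of $|\beta|$ and $d$, invoking the level-of-distribution input (Theorem \ref{thm:ld}, or Belabas's sieve result as quoted in \cite{4author}) to control sums of the shape $\sum_{d \mid \beta} h_3(\beta) |\beta|^{-3/4}$; the $(\log \log X)^3$ in the statement arises there, from the range where $|\beta|$ is close to $X^{1/6}$. Your route buys a cleaner and numerically sharper estimate at the harmless cost of relying on the stated vanishing convention for $\mathfrak{m}_\beta(q, \Sigma)$ (which is legitimate: a tamely ramified prime that is totally ramified in the cubic subfield is unramified in the quadratic resolvent), while the paper's route avoids using that convention and is structurally parallel to Lemma \ref{lem:bounded-q2-q3}, where $\alpha$ genuinely can share prime factors with $q$ and the $d$-decomposition is unavoidable.
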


\begin{proof} 
By Lemma \ref{lem:easy-bound}, the contributions from $\alpha = \beta$ and $\alpha = 1$ to \eqref{eFXGE} are each 
\begin{align*}
\ll & \sum_{\beta} \sum_{\substack{q \\ \beta^6 q^8 (\beta, q)^{-4} \ll X}} h_3(\beta) \tau_4(q) \\
\leq & \sum_{\beta} \sum_{d \mid \beta} \tau_4(d) \sum_{\substack{r \ll X^{1/8} d^{-1/2} |\beta|^{-3/4}}} 
h_3(\beta) \tau_4(r) \\
\ll & \sum_{\beta} h_3(\beta) \sum_{d \mid \beta} \tau_4(d) \frac{X^{1/8}}{d^{1/2} |\beta|^{3/4}} \bigg(\log\bigg(\frac{X^{1/8}}{|\beta|^{3/4}}\bigg)\bigg)^3,
\end{align*}
writing $q = dr$ in the second line with $d = (\beta, q)$. We now split the sum over $\beta$ into $|\beta| \leq X^{1/6} (\log X)^{-16}$ and $X^{1/6} (\log X)^{-16} < |\beta| \ll X^{1/6}$. In the smaller range, we bound the logarithmic term by $(\log X)^3$ and bound $\sum_{d \mid \beta} \tau_4(d)/d^{1/2} \ll \prod_{p \mid \beta} (1 + 4p^{-1/2}) \ll \tau(\beta)$, and obtain a bound of $O(X^{1/6})$ by \eqref{eqn:ckps-bound} and partial summation.

In the larger range of $\beta$, the logarithmic term is bounded by $(\log \log X)^3$. We split the sum over $d$ into $d \leq X^{1/100}$ and $d > X^{1/100}$. We have $\sum_{d \mid \beta, d > X^{1/100}} \tau_4(d)/d^{1/2} \ll X^{-1/200 + \epsilon}$, so that the large $d$ contribute
\[
\ll 
X^{1/8} (\log \log X)^3 \cdot X^{-1/200 + \epsilon} \cdot \sum_{\beta} \frac{h_3(\beta)}{|\beta|^{3/4}} \ll X^{1/6}.
\]
In the small $d$ range, our bound is
\[
\ll X^{1/8} (\log \log X)^3 \sum_{d \leq X^{1/100}} \frac{\tau_4(d)}{d^{1/2}} \sum_{d \mid \beta} \frac{h_3(\beta)}{|\beta|^{3/4}}.
\]
By Theorem \ref{thm:ld}, or alternatively by Belabas's earlier work \cites{belabas_sieve} as quoted in \cite[Theorem 3.1]{4author}, each inner sum is $\ll X^{1/24} / d$, yielding a total contribution $\ll X^{1/6} (\log \log X)^3$. This completes the proof.
\end{proof}

% Since the contribution from $\alpha = \beta$ is negligible, we shall henceforth drop the condition $\alpha \neq \beta$ incurring an acceptable error. We may similarly drop the condition $\alpha \neq 1$. Note that $\mathfrak{m}_\beta(q_2q_3, E_2, E_3)$ is zero when $\beta = 1$ or $\gcd(\beta, q_2q_3) > 1$ or $\gcd(6, q_2q_3) > 1$.
%
%We now start with proving our tail bounds. It will be more convenient to do this in the variables $d := \text{rad}_6(\beta)$ and $q_1 := \text{rad}_6\left(\frac{\alpha}{\gcd(\alpha, \beta) q_3}\right)$, so that we have the useful bound
%$$
%\# \mathcal{F}(X; G) \ll \max_{E_2, E_3} \sum_{\substack{d, q_1, q_2, q_3 \\ d^6 q_1^6 q_2^8 q_3^{10} \leq CX}} \mu^2(d q_1 q_2 q_3) \tau(d) \mathfrak{m}_d(q_2q_3, E_2, E_3)
%$$ 
%for some absolute $C > 0$. Our first lemma bounds the multiplicity of the number of $G$-extensions $K$ as above with fixed invariants $d, q_1, q_2, q_3$.

Our first aim is to use Lemma~\ref{lem:easy-bound} to prove the following `tail estimate':
% show that the number of $G$-extensions with either $q_2$ or $q_3$ large is small.

\begin{lemma}\label{lem:bounded-q2-q3}
	Let $1 \leq Q \leq (\log X)^{2025}$. Then the number of $K \in \mathcal{F}(X;G)$ with $q > Q$ is 
		\[
			\ll \frac{X^{1/6} (\log X)^2 (\log \log X)^4}{Q^{1/3}}.
		\]
\end{lemma}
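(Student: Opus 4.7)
The plan is to apply Lemma~\ref{lem:easy-bound} to reduce the count to a sum of $h_3(\beta)$ weighted by divisor functions, and then handle the resulting sum using an elementary divisor estimate together with \eqref{eqn:ckps-bound}. By \eqref{eInitial}, \eqref{eFXGE}, and the fact that there are only $O(1)$ local specifications $\Sigma$, it will suffice to bound
\[
S := \sum_{q > Q} \tau_4(q) \sum_{\substack{\beta \\ \gcd(\beta, 6q) = 1}} h_3(\beta) \, \#\!\left\{\alpha \text{ squarefree} : [\alpha', \beta']^6 q^8 (\alpha', q)^{-4} \ll X\right\}.
\]

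First, I will decompose by $d := \gcd(\alpha', q)$, writing $q = d q_1$ and $\alpha' = d \alpha_1$; squarefreeness forces $\gcd(d, q_1) = \gcd(\alpha_1, d) = 1$, while the hypothesis $\gcd(\beta', 6q) = 1$ gives $[\alpha', \beta'] = d\,[\alpha_1, \beta']$. The constraint therefore becomes $[\alpha_1, \beta'] \ll Y$ with $Y := X^{1/6}/(d^{5/3} q_1^{4/3})$. The standard decomposition $\alpha_1 = ea$ with $e = \gcd(\alpha_1, \beta')$ then gives $\#\{\alpha_1 \text{ squarefree} : [\alpha_1, \beta'] \leq Y\} \ll \tau(\beta')\, Y/\beta'$. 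Substituting this into $S$ and using \eqref{eqn:ckps-bound} with $s = 2$ together with partial summation to estimate $\sum_{\beta' \leq Y} h_3(\beta) \tau(\beta')/\beta' \ll (\log X)^2$ reduces the problem to the tail estimate
\[
S \ll X^{1/6} (\log X)^2 \sum_{\substack{d, q_1 \text{ squarefree,} \\ \gcd(d,q_1) = 1,\ dq_1 > Q}} \frac{\tau_4(d) \tau_4(q_1)}{d^{5/3} q_1^{4/3}}.
\]

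To evaluate this last sum, for each $d$ I will apply $\sum_{q_1 > Z} \tau_4(q_1)/q_1^{4/3} \ll Z^{-1/3} (\log Z)^3$ with $Z = \max(Q/d, 1)$, obtained by Abel summation from $\sum_{q \leq t} \tau_4(q) \ll t (\log t)^3$. The remaining sum over $d$ converges, since the exponent $5/3 - 1/3 = 4/3$ exceeds $1$, and yields an overall bound $\ll (\log Q)^3/Q^{1/3}$; for $Q \leq (\log X)^{2025}$ this is $\ll (\log \log X)^3/Q^{1/3}$, comfortably within the claimed bound. The key — and only — delicate point will be the exponent $5/3$ on $d$: using only the trivial bound $(\alpha', q) \leq q$ would instead give $[\alpha', \beta'] \ll X^{1/6}/q^{2/3}$, leaving a divergent tail $\sum_{q > Q} \tau_4(q)/q^{2/3}$. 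The decomposition by $d = \gcd(\alpha', q)$ is what recovers two independently convergent exponents ($5/3$ on $d$ and $4/3$ on $q_1$), and is the heart of the argument.
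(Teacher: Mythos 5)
Your proposal is correct and follows essentially the same route as the paper: bound $\mathfrak{m}_\beta(q,\Sigma)$ by Lemma \ref{lem:easy-bound}, exploit the gcd $(\alpha',q)$ to upgrade the exponent of $q$ past $1$, estimate the $\beta$-sum via \eqref{eqn:ckps-bound} and partial summation to get $(\log X)^2$, and finish with a divisor-function tail bound in $q>Q$ using $Q \leq (\log X)^{2025}$. The only difference is cosmetic bookkeeping (factoring $q = dq_1$ with exponents $d^{-5/3}q_1^{-4/3}$ rather than summing $\alpha_1^{-1/3}$ over $\alpha_1 \mid q$ to get $\tau_5(q)q^{-4/3}$), and your version even yields the slightly sharper factor $(\log\log X)^3$.
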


\begin{proof}
By Lemma~\ref{lem:easy-bound}, we find that the number of $K$ with $q > Q$ is
		\begin{align}
			&\ll \sum_{Q < q \ll X^{1/8}} \tau_4(q) \sum_{\substack{\alpha, \beta \\ [\alpha, \beta]^6 (\alpha, q)^{-4} \ll X/q^8}} h_3(\beta) \nonumber \\
			&\ll \sum_{Q < q \ll X^{1/8}} \tau_4(q) \sum_{\alpha_1 \mid q} \sum_{\beta^6 \ll \frac{X}{q^8}} h_3(\beta) \nonumber
			\sum_{\alpha_2 \mid \beta} \sum_{\substack{\alpha^6 \ll X \alpha_1^{4} \alpha_2^6 q^{-8} \beta^{-6} \\ \alpha_1 \alpha_2 \mid \alpha}} 1 \nonumber,
			\end{align}
where $\alpha_1 = (\alpha, q)$ and $\alpha_2 = (\alpha, \beta)$. Applying \eqref{eqn:ckps-bound}, we see that this is
			\begin{align}
			&\ll \sum_{Q < q \ll X^{1/8}} \tau_4(q) \sum_{\alpha_1 \mid q} \frac{X^{1/6}}{\alpha_1^{1/3} q^{4/3}} \sum_{\beta^6 \ll \frac{X}{q^8}} \frac{h_3(\beta) \tau(\beta)}{|\beta|} \label{eq:bound1} \\
			&\ll \sum_{Q < q \ll X^{1/8}} \tau_4(q) \sum_{\alpha_1 \mid q} \frac{X^{1/6} (\log X)^2}{\alpha_1^{1/3} q^{4/3}} \nonumber \\
			&\ll \sum_{Q < q \ll X^{1/8}} \tau_5(q) \frac{X^{1/6} (\log X)^2}{q^{4/3}} \nonumber \\
			&\ll \frac{X^{1/6} (\log X)^2 (\log \log X)^4}{Q^{1/3}} \nonumber,
					\end{align}
					as desired. 
\end{proof}

\begin{lemma} 
\label{lem:d-bound}
	The number of $K \in \mathcal{F}(X;G)$ with $|\beta| < \exp(A \sqrt{\log X})$ or $|\beta| > X^{1/6} / (\log X)^A$ for any fixed $A >0$, is 
		\[
			\ll_{A} X^{1/6} (\log X) (\log\log X).
		\]
\end{lemma}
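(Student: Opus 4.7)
The plan is to combine Lemma~\ref{lem:easy-bound} with a tight count of the admissible $\alpha$ for each fixed pair $(\beta,q)$, and then apply partial summation to the Chan--Koymans--Pagano--Sofos bound \eqref{eqn:ckps-bound}. After summing \eqref{eFXGE} over the finitely many choices of $\Sigma$ and absorbing the degenerate cases $\alpha \in \{0,\pm 1,\pm\beta\}$ into Lemma~\ref{lem:extra_cont} (which already contributes $O(X^{1/6}(\log\log X)^3)$, well below the target), the problem reduces to bounding
\[
S(\mathcal{R}) := \sum_{q \textnormal{ squarefree}} \tau_4(q) \sum_{\beta \in \mathcal{R}} h_3(\beta)\, N(\beta,q),
\]
where $\mathcal{R}$ is either $\{|\beta| < \exp(A\sqrt{\log X})\}$ or $\{X^{1/6}/(\log X)^A < |\beta| \ll X^{1/6}\}$, and $N(\beta,q)$ denotes the number of squarefree $\alpha$ satisfying $\Delta(\Sigma)[\alpha',\beta']^6 q^8 (\alpha',q)^{-4} \leq X$.

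To estimate $N(\beta,q)$, I observe that $N(\beta,q) = 0$ unless $(q,\beta)=1$, in which case I uniquely factor $\alpha' = \alpha_1 \alpha_2 r$ with $\alpha_1 := (\alpha',q) \mid q$, $\alpha_2 := (\alpha',\beta') \mid \beta'$, and $r$ squarefree coprime to $q\beta'$. A direct substitution then gives the clean identity $[\alpha',\beta']^6 q^8 (\alpha',q)^{-4} = \alpha_1^2 r^6 (\beta')^6 q^8$, so the discriminant condition reduces to $r \ll X^{1/6}/(\alpha_1^{1/3} q^{4/3} |\beta'|)$. Summing over $\alpha_1, \alpha_2$ and the $O(1)$ choices of sign and $2,3$-adic decoration yields
\[
N(\beta,q) \ll \sum_{\alpha_1 \mid q}\sum_{\alpha_2 \mid \beta'} \frac{X^{1/6}}{\alpha_1^{1/3}\, q^{4/3}\, |\beta'|} \ll \frac{\tau(q)\,\tau(\beta)\, X^{1/6}}{q^{4/3}\, |\beta'|},
\]
using $\sum_{\alpha_1 \mid q} \alpha_1^{-1/3} \leq \tau(q)$ for squarefree $q$ and $|\beta| \asymp |\beta'|$.

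Substituting makes the $q$-sum decouple into the absolutely convergent series $\sum_q \tau_4(q)\tau(q) q^{-4/3} < \infty$, leaving
\[
S(\mathcal{R}) \ll X^{1/6} \sum_{\beta \in \mathcal{R}} \frac{h_3(\beta)\,\tau(\beta)}{|\beta|}.
\]
Abel summation applied to the bound $\sum_{|\beta|\leq Y} h_3(\beta)\tau(\beta) \ll Y\log Y$ of \eqref{eqn:ckps-bound} then gives
\[
\sum_{Y_0 < |\beta| \leq Y_1} \frac{h_3(\beta)\,\tau(\beta)}{|\beta|} \ll (\log Y_1)^2 - (\log Y_0)^2 + \log Y_1,
\]
valid for $1 \leq Y_0 \leq Y_1$. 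For the low tail with $Y_1 = \exp(A\sqrt{\log X})$ the right-hand side is $\ll A^2 \log X$, while for the high tail the telescoping $(\log Y_1)^2 - (\log Y_0)^2 = (\log Y_1 - \log Y_0)(\log Y_1 + \log Y_0) \asymp (A\log\log X)(\tfrac{1}{3}\log X)$ produces $\ll_A (\log X)(\log\log X)$. In either case $S(\mathcal{R}) \ll_A X^{1/6}(\log X)(\log\log X)$, as claimed.

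The only real subtlety I foresee is the high-tail estimate: a crude upper bound for $\sum_{X^{1/6}/(\log X)^A < |\beta|\ll X^{1/6}} h_3(\beta)\tau(\beta)/|\beta|$ would lose an extra factor of $\log X$ and be too weak. The $\log\log X$ saving arises precisely because the tail has logarithmic length $A\log\log X$, which is what feeds the telescoping difference $(\log Y_1)^2 - (\log Y_0)^2$. The low tail is more forgiving, since the range itself is already short in log-scale.
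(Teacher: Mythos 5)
Your proposal is correct and follows essentially the same route as the paper: the paper proves this lemma by re-running the argument of Lemma~\ref{lem:bounded-q2-q3} through \eqref{eq:bound1} (decomposing $\alpha$ via $\alpha_1=(\alpha,q)$, $\alpha_2=(\alpha,\beta)$, summing the convergent $q$-series over all $q$), and then noting that the restricted inner $\beta$-sum $\sum h_3(\beta)\tau(\beta)/|\beta|$ is $\ll (\log X)(\log\log X)$ by \eqref{eqn:ckps-bound} and partial summation. Your write-up just makes explicit the partial-summation/telescoping computation for the two tail ranges, which the paper leaves implicit.
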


\begin{proof}
 The proof is identical to that of Lemma \ref{lem:bounded-q2-q3} through \eqref{eq:bound1}, apart from the ranges of $q$ (which we no longer restrict) 
 and $\beta$. As each inner $\beta$-sum in the analogue of \eqref{eq:bound1} is now
 $\ll (\log X) (\log \log X)$, our sum is
 \[
 	\ll \sum_{q = 1}^{\infty} \tau_4(q) \sum_{\alpha_1 \mid q} \frac{X^{1/6} (\log X) (\log \log X)}{\alpha_1^{1/3} q^{4/3}} \ll X^{1/6} (\log X) (\log \log X),
	\]
as desired.
\end{proof}

\subsection{The main term}
We continue to observe the conventions of Notation \ref{ournotation}, and also the convention that all sums over $\beta'$ will be over the range 
$\exp(\sqrt{\log X}) \leq \beta' \leq X^{1/6} (\log X)^{-10}$.
Using Lemmas \ref{lem:extra_cont}, \ref{lem:bounded-q2-q3}, and \ref{lem:d-bound} with $Q = (\log X)^5$ and $A = 10$, we rewrite the sum in 
\eqref{eFXGE} as

\begin{align}
E(X) + \mathcal{F}(X; G, \Sigma) & =
\sum_{\beta'} \sum_{q \leq Q} \mathfrak{m}_\beta(q, \Sigma) \nonumber
\sum_{\substack{\alpha \\ \Delta(\Sigma) [\alpha', \beta']^6 q^8 (\alpha', q)^{-4} \leq X}} 1
\\ 
& = 
\sum_{\beta'} \sum_{q \leq Q} \nonumber
\mathfrak{m}_\beta(q, \Sigma) \tau(\beta')
\sum_{\substack{\alpha_1 \\ (\alpha_1, 6 \beta') = 1 \\ \Delta(\Sigma) (\alpha_1 \beta')^6 q^8 (\alpha_1, q)^{-4} \leq X}} 1 \\
& = \sum_{\beta'} \sum_{q \leq Q} \label{eq:initial_red}
\mathfrak{m}_\beta(q, \Sigma) \tau(\beta')
\sum_{q_1 \mid q}
\sum_{\substack{\alpha_3 \\ (\alpha_3, 6 \beta' q) = 1 \\ \Delta(\Sigma) (\alpha_3 \beta')^6 q^8 q_1^2 \leq X}} 1,
\end{align}
with $E(X) \ll X^{1/6} (\log X) (\log\log X)$,
seen as follows.
For each choice of $\beta'$, 
factor each $\alpha'$ as $\alpha' = \alpha_1 \alpha_2$ with $\alpha_2 = (\alpha', \beta')$. For each $\alpha_1$ there are $\tau(\beta')$ choices of $\alpha_2$ corresponding to 
each fixed value of $[\alpha', \beta']$, yielding the second line. Then, write $q_1 = (\alpha_1, q)$ and $\alpha_3 = \alpha_1 / q_1$ to conclude the third. 

The $\Sigma$-compatibility condition on $\alpha$ is equivalent to fixing its sign and fixing the congruence of $\alpha'$ in precisely $6$ invertible residue classes modulo $144$. We may thus apply Lemma \ref{lem:squarefree-sum} with $c=144$ and $m = \beta' q$ to see that the sum over $\alpha_3$ in \eqref{eq:initial_red} equals
$$ 
6 \cdot \frac{X^{1/6}}{\beta'{\Delta(\Sigma)}^{1/6} q^{4/3} q_1^{1/3}} \cdot \frac{6\psi(6\beta' q)}{\pi^2 \phi(144)} + 
O\left(\frac{X^{1/12} \tau(\beta q)}{|\beta|^{1/2} q^{2/3} q_1^{1/6}}\right).
$$ 
By Lemma \ref{lem:easy-bound} the contribution of the error term towards \eqref{eq:initial_red} is
\begin{align*}
\ll \ & X^{1/12}
\sum_{\substack{\beta'}} 
\frac{\mu^2(\beta) \tau(\beta)^2 h_3(\beta)}{|\beta|^{1/2}}
\sum_{\substack{q \leq Q}} \frac{\tau_4(q) \tau(q)^2}{q^{2/3}} \\
\ll \ & X^{1/12} \cdot X^{1/12} (\log X)^{-2} \cdot Q^{1/3 + \epsilon} \ll X^{1/6},
\end{align*}
where to bound the sum over $\beta$ we note that $\tau(b)^2 \leq \tau_4(\beta)$ for squarefree $\beta$, and apply 
\eqref{eqn:ckps-bound} with partial summation. We conclude that 
\begin{align}
\mathcal{F}(X; G, \Sigma) & = 
\frac{3 X^{1/6}}{8 \pi^2 \Delta(\Sigma)^{1/6}} 
\sum_{\beta'} \frac{ \tau(\beta') \psi(\beta')}{\beta'} \sum_{q \leq Q} \label{eq:truncated}
\mathfrak{m}_\beta(q, \Sigma) \frac{\psi(q)}{q^{4/3}} \prod_{p \mid q} \Big( 1 + \frac{1}{p^{1/3}} \Big) \\ & + O\big(X^{1/6} (\log X) (\log\log X)\big). \nonumber
\end{align}
To proceed, write $\psi(\beta') = \sum_{e \mid \beta'} \frac{\mu(e)}{\sigma(e)}$.
Using Lemma \ref{lem:easy-bound}, we see that the contribution to \eqref{eq:truncated} from $e > E$ is
\begin{align*}
\ll \ & X^{1/6} \sum_{\substack{\beta'}} \frac{\tau(\beta') h_3(\beta)}{\beta'} \sum_{\substack{e > E \\ e \mid \beta'}} \frac{1}{\sigma(e)} \sum_{q \leq Q}
 \frac{\psi(q) \tau_4(q)}{q^{4/3 - \epsilon}} \\
\ll \ & X^{1/6} \sum_{\substack{\beta'}} \frac{\tau(\beta') h_3(\beta)}{\beta'} \cdot \frac{\tau(\beta)}{E} \cdot O(1) \\
\ll \ & \frac{X^{1/6} (\log X)^4}{E},
\end{align*}
so that with $E := (\log X)^4$ we have
\begin{align}
\mathcal{F}(X; G, \Sigma) & = 
\frac{3 X^{1/6}}{8 \pi^2 \Delta(\Sigma)^{1/6}} 
\sum_{e \leq E} \frac{\mu(e)}{\sigma(e)} \sum_{q \leq Q} \frac{\psi(q)}{q^{4/3}} \prod_{p \mid q} \Big( 1 + \frac{1}{p^{1/3}} \Big)
\sum_{\substack{\beta' \\ e \mid \beta'}}
\mathfrak{m}_\beta(q, \Sigma) \frac{\tau(\beta')}{\beta'} \nonumber \\ & + O(X^{1/6}), \label{eq:fg2}
\end{align}
where the sums over $e$ and $q$ are supported on squarefree integers coprime to $6$.

By Theorem \ref{tmAverage} and partial summation, we have that
\begin{multline*}
\sum_{\substack{\beta' \\ e \mid \beta'}}
\mathfrak{m}_\beta(q, \Sigma) \frac{\tau(\beta')}{\beta'} = \frac{91 \mu^2(6eq) C(q, e) \tau(e) w_2 w_3 w_\infty (\log X)^2}{36 \cdot 72 \cdot \zeta(2)} \cdot \prod_{p \mid e}
\bigg( \frac{1}{p + 1} \bigg) \prod_{p \mid q} \bigg( \frac{p}{p + 1} \bigg) \\
+ O(\tau_4(q) \log X (\log \log X)^{7/2}),
\end{multline*}
with
\[
C(q, e) := \prod_p \bigg( 1 + \frac{\mathbf{1}_{p \nmid 6qe}}{p + 1} \bigg) \bigg( 1 - \frac{1}{p} \bigg).
\]
Here the factor $72$ in the denominator comes from the integral
$$
\int_{\exp(\sqrt{\log X})}^{X^{1/6} (\log X)^{-10}} \frac{\log t}{t} \ \textnormal{dt} = \frac{(\log X)^2}{72} + O\big((\log X)(\log \log X)\big).
$$
Substituting into \eqref{eq:fg2}, we check that the error contributes
$
O(X^{1/6} \log X (\log \log X)^{9/2}),
$
and that the main term, if summed over $e > E$ and $q > Q$, contributes $O(X^{1/6} (\log X)^2 (E^{-1} + Q^{-1/3 + \epsilon}))$, which is
$O(X^{1/6} \log X)$.

Summing over $\Sigma$, using that $\zeta(2) = \pi^2/6$ and $\frac{3}{8} \cdot \frac{91}{36 \cdot 72} \cdot 6 = \frac{91}{1152}$ and recalling the additional factor $1/12$ from equation \eqref{eInitial}, we obtain that 
\[
\# \mathcal{F}(X; G) = C_1 C_2 C_3 X^{1/6} (\log X)^2 + O\big( X^{1/6} (\log X) (\log\log X)^{9/2} \big),
\]
\[
C_1 := \frac{91}{13824 \pi^4} \prod_p \bigg( 1 + \frac{1}{p + 1} \bigg) \bigg( 1 - \frac{1}{p} \bigg),
\]
\begin{align*}
C_2 &:= \sum_{e, q} \mu^2(6eq) \frac{\mu(e) \tau(e)}{\sigma(e)} \prod_{p \mid e} \bigg( \frac{1}{p + 1} \bigg) \frac{\psi(q)}{q^{4/3}} \prod_{p \mid q} \Big( 1 + \frac{1}{p^{1/3}} \Big) \bigg( \frac{p}{p + 1} \bigg) \prod_{p \mid 6eq} \left(1 + \frac{1}{p + 1}\right)^{-1} \\
&:= \frac{3}{5} \prod_{p > 3} \left(1 - \frac{2}{(p + 1)(p + 2)} + \frac{1}{p^{4/3}} \left(1 + \frac{1}{p^{1/3}}\right) \frac{p^2}{(p + 1) (p + 2)}\right),
\end{align*}
\[
C_3 := \sum_{\Sigma} \frac{w_2 w_3 w_\infty}{\Delta(\Sigma)^{1/6}}.
\]

\subsection{Simplifications and comparison with the conjectural asymptotic}
\label{ssLS1}
	With the aim of comparing our proven asymptotic with the general conjectural formula of Loughran--Santens \cite{LS}, we now simplify the constant in our asymptotic for $\#\mathcal{F}(X;G)$. We begin by observing that
		\begin{align*}
			C_1
				&= \frac{91}{2^9 3^3 \pi^4} \prod_p \left(1+\frac{1}{p+1}\right)\left(1-\frac{1}{p}\right) 
				= \frac{91}{2^{11} 3^5} \prod_p \left(1+\frac{1}{p+1}\right)\left(1-\frac{1}{p}\right)\left(1-\frac{1}{p^2}\right)^2 \\
				&= \frac{91}{2^{11} 3^5} \prod_p \left(1-\frac{1}{p}\right)^3 \left( \frac{(p+2)(p+1)}{p^2}\right) 
				= \frac{91 \cdot 5}{2^9 3^9} \prod_{p \geq 5} \left(1-\frac{1}{p}\right)^3 \left( \frac{(p+2)(p+1)}{p^2}\right).
%				&= \frac{91\cdot 5}{2^{8} 3^9} \prod_{p \geq 5} \left(1-\frac{1}{p}\right)^3 \left( \frac{(p+2)(p+1)}{p^2}\right).
		\end{align*}
	As $C_2$ may be written as
		\[
			C_2 
				= \frac{3}{5} \prod_{p \geq 5} \left(\frac{p^2}{(p+2)(p+1)}\right) \left(1 + \frac{3}{p} + \frac{1}{p^{4/3}} + \frac{1}{p^{5/3}}\right),
		\]
	it therefore follows that
		\[
			C_1 C_2
				= \frac{91}{2^9 3^8} \prod_{p \geq 5} \left(1-\frac{1}{p}\right)^3\left(1 + \frac{3}{p} + \frac{1}{p^{4/3}} + \frac{1}{p^{5/3}}\right).
		\]
	This expression will be convenient in the comparison with the predicted constant, so it only remains to explicitly evaluate (and simplify where possible) the constant $C_3$. To this end, we begin by noting that the expression given is multiplicative across the three places $2,3,\infty$, so $C_3$ will be given as the product of the corresponding local summations. Second, the contribution from the archimedean place will be $2$, since there is no contribution from the discriminant (either in $C_3$, or in the definition of the weight $w_\infty$, and we have to then sum over the two quadratic \'etale algebras). The contribution from the $2$-adic and $3$-adic algebras may be determined by using the tables of local fields presented in the LMFDB \cite{LMFDB}, and which we compute in \verb^Magma^ \cite{Magma} with code available at \cite{github}. There are $27$ relevant \'etale algebras over $\mathbb{Q}_2$ and $33$ relevant algebras over $\mathbb{Q}_3$, and the net result is that 
		\begin{align*}
			C_3 
				&= 2 \left( \frac{23}{7} + \frac{8}{7} \cdot \frac{1}{2^{4/3}} + \frac{8}{7}\cdot \frac{1}{2^{5/3}}\right) \left( \frac{112}{39} + \frac{72}{39} \cdot \frac{1}{3^{4/3}} + \frac{72}{39} \cdot \frac{1}{3^{5/3}} \right) \\
				&= 2 \cdot \frac{8}{7} \cdot \frac{24}{13} \cdot \left( \frac{23}{8} + \frac{1}{2^{4/3}} + \frac{1}{2^{5/3}}\right) \left(\frac{14}{9} + \frac{1}{3^{4/3}} + \frac{1}{3^{5/3}}\right).
		\end{align*}
	Thus, all told, we find that
		\begin{multline*}
			C_1 C_2 C_3
				= \frac{1}{2^2 3^7} \left( \frac{23}{8} + \frac{1}{2^{4/3}} + \frac{1}{2^{5/3}}\right)\left(\frac{14}{9} + \frac{1}{3^{4/3}} + \frac{1}{3^{5/3}}\right) \times \\ 
				\prod_{p \geq 5} \left(1-\frac{1}{p}\right)^3\left(1 + \frac{3}{p} + \frac{1}{p^{4/3}} + \frac{1}{p^{5/3}}\right).
		\end{multline*}
	
	Loughran and Santens \cite{LS} have recently substantially clarified Malle's conjecture by providing an explicit prediction for the leading constant in the asymptotic. We do not aim for a fully general or independent discussion of their work, instead targeting a streamlined presentation that facilitates the comparison with the proven asymptotic above. Note that $G = D_6$ is generated by its elements of order $2$, $|\mathrm{Aut}(G)| = |G| = 12$, $G$ has four rational characters, and (with the usual conventions) $a(G) = 1/6$ and $b(G) = 3$. Noting that \cite[Lemma 6.38]{LS} and that \cite[Remark 3.5 and Lemma 6.32]{LS} and that all characters of $D_6$ are rational, Loughran and Santens's Conjecture 1.3.(1) specializes to predict that
		$$
			\#\{ K \in \mathcal{F}(X;G) : \mathbb{Q}(\zeta_3) \cap K = \mathbb{Q}\}
				\sim c(G)\cdot X^{1/6} (\log X)^{2},
		$$
	where $c(G)$ is given explicitly by
		\begin{align} \label{eqn:LS-constant}
			c(G)
				&= \frac{1}{6^2 \cdot 4 \cdot 2!} \cdot \frac{|\mathrm{Hom}(C_2,G)|}{|G|} \cdot \prod_p \left( \frac{(1-p^{-1})^{3}}{|G|} \sum_{\varphi_p \in \mathrm{Hom}(G_{\mathbb{Q}_p},G)} \frac{1}{p^{v_p(\Delta_{\varphi_p})/6}}\right) \notag \\
				&= \frac{1}{2^4 3^3} \prod_p \left( \frac{(1-p^{-1})^{3}}{|G|} \sum_{\varphi_p \in \mathrm{Hom}(G_{\mathbb{Q}_p},G)} \frac{1}{p^{v_p(\Delta_{\varphi_p})/6}}\right)
		\end{align}
	where we explain additional notation as needed. We will show at the end of this proof that 
		\begin{equation}\label{eqn:contains-mu3}
				\#\{ K \in \mathcal{F}(X;G) : K \cap \mathbb{Q}(\zeta_3) = \mathbb{Q}(\zeta_3)\}
					 = O(X^{1/6}),
		\end{equation}
	so our goal is to establish the equality $c(G) = C_1C_2C_3$. 
	%Robert: Annoyingly, I think this is necessary to fully justify that we agree with Loughran--Santens :-(. Also, I wrote $X^{1/6} \log X$, but I'm realizing now the right answer might be $X^{1/6$. Will think through details later.
	This evidently amounts to evaluating the local terms in the Euler product above.
	
	An element $\varphi_p \in \mathrm{Hom}(G_{\mathbb{Q}_p},G)$ gives rise to a Galois degree $12$ \'etale algebra $A$ over $\mathbb{Q}_p$, given explicitly as $A = (\overline{\mathbb{Q}_p}^{\ker \varphi_p})^{ [ G : \mathrm{im}(\varphi_p)]}$. Let $D := \mathrm{im}(\varphi_p)$. The field $K_p := \overline{\mathbb{Q}_p}^{\ker \varphi_p}$ is then a Galois extension of $\mathbb{Q}_p$ with Galois group $D$, and $v_p(\Delta_{\varphi_p})$ is defined to be $v_p( \mathrm{Disc}(A/\mathbb{Q}_p)) = v_p( \mathrm{Disc}(K_p)) \cdot [G:D]$. We therefore find that
		\begin{equation} \label{eqn:LS-field-sum}
			\sum_{\varphi_p \in \mathrm{Hom}(G_{\mathbb{Q}_p},G)} \frac{1}{p^{v_p(\Delta_{\varphi_p})/6}}
				= \sum_{D \leq G} |\mathrm{Aut}(D)| \sum_{\substack{ K_p/\mathbb{Q}_p \text{ Galois} \\ \mathrm{Gal}(K_p/\mathbb{Q}_p) \cong D}} |\mathrm{Disc} (K_p)|_p^{\frac{[G:D]}{6}}.
		\end{equation}
	The subgroups of $G$ are, up to isomorphism, $1$, $C_2$, $C_3$, $C_2 \times C_2$, $C_6$, $S_3$, and $G$ itself, with isomorphism classes of size $1$, $7$, $1$, $3$, $1$, $2$, and $1$, respectively. (Note that the summation over $D$ in \eqref{eqn:LS-field-sum} is over subgroups, not isomorphism classes.) We now evaluate the sum over local fields on the right-hand side (an alternative way is to apply \cite[Corollary 8.11]{LS}).
	
	Suppose first that $p \equiv 1 \pmod{3}$. Then there are no $S_3$-extensions of $\mathbb{Q}_p$, and thus also no $G$-extensions either. There are three quadratic extensions of $\mathbb{Q}_p$ (one unramified and two with discriminant valuation $1$), four $C_3$-extensions (one unramified, three ramified with discriminant valuation $2$), one $C_2 \times C_2$-extension (with discriminant valuation $2$), and twelve $C_6$-extensions (one unramified, two with discriminant valuation $3$, three with discriminant valuation $4$, and six with discriminant valuation $5$). All told, if $p \equiv 1 \pmod{3}$, then bookkeeping reveals
		\begin{equation}\label{eqn:LS-accounting-tame}
			\sum_{D \leq G} |\mathrm{Aut}(D)| \sum_{\substack{ K_p/\mathbb{Q}_p \text{ Galois} \\ \mathrm{Gal}(K_p/\mathbb{Q}_p) \cong D}} |\mathrm{Disc} (K_p)|_p^{\frac{[G:D]}{6}}
				= 12 + \frac{36}{p} + \frac{12}{p^{4/3}} + \frac{12}{p^{5/3}}.
		\end{equation}
	If instead $p \equiv 2 \pmod{3}$ but $p \ne 2$, then there is a unique $S_3$-extension of $\mathbb{Q}_p$ (with discriminant valuation $4$) and a unique $G$-extension (with discriminant valuation $10$), but there are no ramified $C_3$-extensions, and only two ramified $C_6$-extensions (both with discriminant valuation $3$). The accounting of $C_2$ and $C_2 \times C_2$-extensions is the same as when $p \equiv 1 \pmod{3}$, and we find in total that \eqref{eqn:LS-accounting-tame} holds also if $p \equiv 2\pmod{3}$.
	
	For $p = 2$ and $p=3$, we enumerate over the $27$ and $33$ relevant \'etale algebras found in the LMFDB, computing the summation on the right-hand side of \eqref{eqn:LS-field-sum} explicitly. We find if $p=2$ that
		\[
			\sum_{D \leq G} |\mathrm{Aut}(D)| \sum_{\substack{ K_2/\mathbb{Q}_2 \text{ Galois} \\ \mathrm{Gal}(K_2/\mathbb{Q}_2) \cong D}} |\mathrm{Disc} (K_2)|_2^{\frac{[G:D]}{6}}
				= \frac{69}{2} + \frac{12}{2^{4/3}} + \frac{12}{2^{5/3}}
		\]
	and if $p=3$ that
		\[
			\sum_{D \leq G} |\mathrm{Aut}(D)| \sum_{\substack{ K_3/\mathbb{Q}_3 \text{ Galois} \\ \mathrm{Gal}(K_3/\mathbb{Q}_3) \cong D}} |\mathrm{Disc} (K_3)|_3^{\frac{[G:D]}{6}}
				= \frac{224}{9} + \frac{16}{3^{4/3}} + \frac{16}{3^{5/3}}.
		\]
	Returning to \eqref{eqn:LS-constant}, we therefore find that
		\begin{align*}
			c(G)
				&= \frac{1}{2^4 3^3} \cdot \frac{1}{8} \cdot \left(\frac{23}{8} + \frac{1}{2^{4/3}} + \frac{1}{2^{5/3}}\right) \cdot \frac{8}{27} \cdot \left(\frac{56}{27} + \frac{4}{3}\cdot \frac{1}{3^{4/3}} + \frac{4}{3}\cdot \frac{1}{3^{5/3}}\right) \cdot \\
				&\hspace{3.2in} \cdot \prod_{p \geq 5} \left(1-\frac{1}{p}\right)^3 \left(1 + \frac{3}{p} + \frac{1}{p^{4/3}} + \frac{1}{p^{5/3}}\right) \\
				&= \frac{1}{2^2 3^7} \left( \frac{23}{8} + \frac{1}{2^{4/3}} + \frac{1}{2^{5/3}}\right)\left(\frac{14}{9} + \frac{1}{3^{4/3}} + \frac{1}{3^{5/3}}\right)\prod_{p \geq 5} \left(1-\frac{1}{p}\right)^3\left(1 + \frac{3}{p} + \frac{1}{p^{4/3}} + \frac{1}{p^{5/3}}\right) \\
				&= C_1 C_2 C_3.
		\end{align*}
	
	\subsection{\texorpdfstring{$D_6$-extensions with a given quadratic subfield}{D6-extensions with a given quadratic subfield}}
	\label{ssLS2}
	The following proposition suffices to establish \eqref{eqn:contains-mu3} by taking $d=-3$. It also follows from more general work of Alberts \cite{Alberts}.
	
	\begin{proposition}
		Let $d \ne 1$ be a squarefree integer. Then as $X \to \infty$,
			\[
				\#\{K \in \mathcal{F}(X;G) : \mathbb{Q}(\sqrt{d}) \hookrightarrow K\}
					= O_d(X^{1/6}).
			\]
	\end{proposition}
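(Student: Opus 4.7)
The plan is to exploit that a Galois $D_6 \cong C_2 \times S_3$-extension $K = \mathbb{Q}(\sqrt{\alpha}) \cdot F$, with $F$ the Galois $S_3$-subfield of quadratic resolvent $\mathbb{Q}(\sqrt{\beta})$, has exactly three quadratic subfields, corresponding to the three index-$2$ subgroups of $D_6$: $\mathbb{Q}(\sqrt{\alpha})$, $\mathbb{Q}(\sqrt{\beta})$, and $\mathbb{Q}(\sqrt{\alpha\beta})$. Hence the embedding $\mathbb{Q}(\sqrt{d}) \hookrightarrow K$ forces one of three mutually exclusive conditions (up to signs and the $2,3$-adic data specified by $\Sigma$): $\alpha = d$, $\beta = d$, or $\alpha\beta$ has squarefree part $d$. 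In each case I would return to the parametrization \eqref{eFXGE} of $\mathcal{F}(X;G,\Sigma)$ and invoke the tools developed earlier in \S\ref{sec:malle}.

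For the first case, $\alpha = d$ is fixed, so the discriminant constraint in \eqref{eFXGE} reduces to $[d',\beta']^6 q^8 \ll_d X$. Bounding $\mathfrak{m}_\beta(q,\Sigma) \ll h_3(\beta)\tau_4(q)$ by Lemma~\ref{lem:easy-bound} and summing over $\beta$ by means of \eqref{eqn:ckps-bound} with $s=1$ (together with partial summation) yields $O_d(X^{1/6})$. The second case $\beta = d$ is handled analogously: since $\mathfrak{m}_d(q,\Sigma) = O_d(\tau_4(q))$ uniformly, the count of squarefree $\alpha$ satisfying the discriminant bound, after splitting $\alpha' = q_1 \alpha_3$ with $q_1 := (\alpha',q)$ exactly as in the passage from \eqref{eFXGE} to \eqref{eq:initial_red}, gives $O_d(X^{1/6})$ again.

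The third case is the most delicate. Writing $g := \gcd(\alpha,\beta)$ and $(a,b) := (\alpha/g, \beta/g)$, the condition that $\alpha\beta$ have squarefree part $d$ forces $ab = d$, leaving only $O_d(1)$ admissible coprime pairs $(a,b)$ once signs and $2,3$-adic data are pinned down by $\Sigma$. For each pair one computes $[\alpha',\beta'] = d' g'$, so the discriminant constraint becomes $g^6 q^8 \ll_d X$. Applying Lemma~\ref{lem:bounded-q2-q3} to truncate to $q \leq (\log X)^{10}$ with acceptable error, and then Lemma~\ref{lem:easy-bound}, the residual count is bounded by
\[
\sum_{q \leq (\log X)^{10}} \tau_4(q) \sum_{\substack{g \text{ squarefree} \\ \gcd(g,6d)=1,\; g^6 \ll_d X/q^8}} h_3(gb).
\]
The main obstacle is controlling this inner sum uniformly, since $h_3$ does not factor across coprime parts of its argument; I would appeal to a Davenport--Heilbronn estimate in the fixed arithmetic progression $D \equiv 0 \pmod{b}$ (a specialization of Theorem~\ref{thm:ld} with bounded modulus $b$) to bound it by $O_d((X/q^8)^{1/6})$. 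Combined with the convergence of $\sum_q \tau_4(q)/q^{4/3}$, this delivers $O_d(X^{1/6})$ in the third case and completes the proof; everything else amounts to routine bookkeeping on top of the estimates already established in the paper.
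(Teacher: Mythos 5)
Your proposal is correct, but it takes a partly different route from the paper. Both arguments start from the same observation that $K$ has exactly three quadratic subfields $\mathbb{Q}(\sqrt{\alpha})$, $\mathbb{Q}(\sqrt{\beta})$, $\mathbb{Q}(\sqrt{\alpha\beta})$, and your case $\beta = d$ is essentially identical to the paper's treatment of the case where $\mathbb{Q}(\sqrt{d})$ is fixed by the $C_6$-subgroup: bound $\mathfrak{m}_d(q,\Sigma) \ll_d \tau_4(q)$, count $\alpha$ via the splitting $\alpha' = q_1\alpha_3$, and use convergence of $\sum_q \tau_4(q)\tau(q)q^{-4/3}$. The divergence is in your cases $\alpha = d$ and $\mathrm{sf}(\alpha\beta) = d$, i.e.\ the cases where $\mathbb{Q}(\sqrt{d})$ is fixed by one of the two $S_3$-subgroups: the paper disposes of both at once by noting that $K$ is then the compositum of $\mathbb{Q}(\sqrt{d})$ with the unique Galois $S_3$-sextic subfield $L$, that $|\mathrm{Disc}(L)|^2 \leq |\mathrm{Disc}(K)|$, and that Bhargava--Wood gives $O(X^{1/6})$ such $L$ with $|\mathrm{Disc}(L)| \leq X^{1/2}$, whereas you push these cases through the explicit $(\alpha,\beta,q)$ parametrization with Davenport--Heilbronn-type bounds on $\sum h_3$. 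Your computation goes through: in case three, $\gcd(\alpha',q) = \gcd(a',q) \leq |d|$ because $q$ is coprime to $\beta \supseteq g$ by Notation \ref{ournotation} (worth stating, since you silently dropped the $(\alpha',q)^{-4}$ factor), and the inner sum $\sum_{g} h_3(gb)$ is $\ll_d (X/q^8)^{1/6}$ simply because $\beta = gb$ runs injectively over integers of size $\ll_d (X/q^8)^{1/6}$, so the plain bound $\sum_{|\beta|\leq Y} h_3(\beta) \ll Y$ suffices --- the arithmetic-progression specialization of Theorem \ref{thm:ld} you invoke is stronger than needed (though it would be the right tool if one wanted the asymptotic, as the paper's closing remark indicates). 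Likewise the truncation to $q \leq (\log X)^{10}$ via Lemma \ref{lem:bounded-q2-q3} is harmless but unnecessary, since $\sum_q \tau_4(q) q^{-4/3}$ converges over the full range $q \ll X^{1/8}$. In short, the paper's route is shorter by outsourcing two of the three cases to Bhargava--Wood, while yours is more self-contained within the parametrization at the cost of extra bookkeeping.
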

	
	\begin{proof}
		If $\mathbb{Q}(\sqrt{d}) \hookrightarrow K$ for some $K \in \mathcal{F}(X;G)$, then there are two fundamentally different possibilities: either $\mathbb{Q}(\sqrt{d})$ is the subfield of $K$ fixed by one of the two subgroups of $D_6$ isomorphic to $S_3$ (in which case $K$ is the compositum of $\mathbb{Q}(\sqrt{d})$ with an $S_3$-extension $L/\mathbb{Q}$), or $\mathbb{Q}(\sqrt{d})$ is the subfield of $K$ fixed by the unique subgroup of $G$ isomorphic to $C_6$. We will show that each of these possibilities in turn contribute an amount that is $O_d(X^{1/6})$.
		
		In the first case, it suffices to consider those $K$ obtained as the compositum of $\mathbb{Q}(\sqrt{d})$ with a Galois $S_3$-extension $L/\mathbb{Q}$. However, in this case, we have $|\mathrm{Disc}(L)|^2 \leq |\mathrm{Disc}(K)|$, so the number of such $K$ is at most the number of $L$ for which $|\mathrm{Disc}(L)| \leq X^{1/2}$. By \cite{BhargavaWood}, the number of such fields $L$ is $O(X^{1/6})$, so the result follows in this case.
		
		For the second case, $K$ may be obtained as the normal closure of the compositum of a quadratic extension $\mathbb{Q}(\sqrt{\alpha})$ with a $S_3$-cubic field $L$ whose discriminant satisfies $\mathrm{Disc}(L) = d q^2$, where $\alpha$ and $q$ are squarefree integers, except possibly at $2$ and $3$. From the considerations in the start of this section, we then find $|\mathrm{Disc}(K)| \gg_d \alpha_1^6 q^8 \alpha_2^{2}$ where $\alpha_2 = (\alpha,q)$ and $\alpha_1 = \alpha/\alpha_2$. We also define $m_d(q)$ to be the sum over all local specifications $\Sigma$ of $m_d(q, \Sigma)$. There is then a constant $c_d>0$ so that number of fields $K$ arising in this second case is therefore at most
			\begin{align*}
				\sum_{q \leq X^{1/8}} m_d(q) \sum_{\alpha_2 \mid q} \sum_{\alpha_1 \leq c_d X^{1/6}/q^{-4/3} \alpha_2^{-1/3}} 1
					&\ll_d X^{1/6} \sum_{q \leq X^{1/8}} \frac{m_d(q)}{q^{4/3}} \prod_{p \mid q} \left(1 + \frac{1}{p^{1/3}}\right) \\
					&\ll X^{1/6} \sum_{q \leq X^{1/8}} \frac{m_d(q) \tau(q)}{q^{4/3}} \\
					&\ll X^{1/6},
			\end{align*}
		completing the proof of the proposition.
	\end{proof}
	
	\begin{remark}
		It should not be difficult to extend the above proposition into an asymptotic formula. For fields obtainable in the first case, it suffices to understand asymptotics for Galois $S_3$-extensions with finitely many local conditions, which follows from \cite{BhargavaWood}. For fields in the second case, because the summation over $q$ converges, it suffices to understand only the distribution of quadratic fields with finitely many local conditions, as follows from \cite{DW3}.
	\end{remark}

\end{document}